\newcommand{\R}{\mathbb{R}}
\newcommand{\C}{\mathbb{C}}
\newcommand{\Z}{\mathbb{Z}}
\newcommand{\N}{\mathbb{N}}
\newcommand{\T}{\mathbb{T}}
\newcommand{\supp}{\operatorname{supp}}
\newcommand{\PV}{\operatorname{P.V.}}
\newcommand{\Dom}{\operatorname{Dom}}
\renewcommand{\Re}{\operatorname{Re}}
\newtheorem{thm}{Theorem}[section]
\newtheorem{cor}[thm]{Corollary}
\newtheorem{lem}[thm]{Lemma}
\theoremstyle{definition}
\newtheorem{defn}[thm]{Definition}
\newtheorem{rem}[thm]{Remark}
\numberwithin{equation}{section}
\keywords{Nonlocal discrete diffusion equations, fractional discrete Laplacian, regularity and extension problem,
Sobolev and Poincar\'e inequalities, error of approximation, semidiscrete heat equation}
\subjclass[2010]{Primary: 35R11, 49M25. Secondary: 35K05, 65N15}
\begin{document}

\title[Nonlocal discrete diffusion equations, regularity and applications]
{Nonlocal discrete diffusion equations and the fractional discrete Laplacian, regularity and applications}

\author[\'O. Ciaurri, L. Roncal, P. R. Stinga, J. L. Torrea, and J. L. Varona]
{\'Oscar Ciaurri \and Luz Roncal \and Pablo Ra\'ul Stinga \and \\ Jos\'e L. Torrea \and Juan Luis Varona}

\address[\'O. Ciaurri and J. L. Varona]{%
Departamento de Matem\'aticas y Computaci\'on\\
Universidad de La Rioja\\
26006 Logro\~no, Spain}
\email{\{oscar.ciaurri,jvarona\}@unirioja.es}

\address[L. Roncal]{%
Basque Center for Applied Mathematics (BCAM)\\
Alameda de Mazarredo 14\\
48009 Bilbao, Spain}
\email{lroncal@bcamath.org}

\address[P. R. Stinga]{Department of Mathematics\\
Iowa State University\\
396 Carver Hall, Ames, IA\\
50011, USA}
\email{stinga@iastate.edu}

\address[J. L. Torrea]{Departamento de Matem\'aticas, Facultad de Ciencias\\
Universidad Aut\'onoma de Madrid\\
28049 Madrid, Spain}
\email{joseluis.torrea@uam.es}

\thanks{Research partially supported by grants MTM2015-66157-C2-1-P
and MTM2015-65888-C04-4-P, MINECO/FEDER, UE,
from Government of Spain}

\begin{abstract}
The analysis of nonlocal discrete equations driven by
fractional powers of the discrete Laplacian on a mesh of size $h>0$
\[
(-\Delta_h)^su=f,
\]
for $u,f:\Z_h\to\R$, $0<s<1$, is performed. The pointwise nonlocal formula for $(-\Delta_h)^su$ and
the nonlocal discrete mean value property for discrete $s$-harmonic functions are obtained.
We observe that a
characterization of $(-\Delta_h)^s$ as the Dirichlet-to-Neumann operator for a
semidiscrete degenerate elliptic local extension problem is valid.
Regularity properties and Schauder estimates in discrete H\"older spaces as well as
existence and uniqueness of solutions to the nonlocal Dirichlet problem are shown.
For the latter, the fractional discrete Sobolev embedding and the
fractional discrete Poincar\'e inequality are proved,
which are of independent interest. We introduce the negative power (fundamental solution)
\[
u=(-\Delta_h)^{-s}f,
\]
which can be seen as the Neumann-to-Dirichlet map
for the semidiscrete extension problem. We then prove
the discrete Hardy--Littlewood--Sobolev inequality for $(-\Delta_h)^{-s}$.

As applications, the convergence of our fractional discrete Laplacian to
the (continuous) fractional Laplacian as $h\to0$ in H\"older spaces
is analyzed. Indeed, uniform estimates for the error of the approximation
in terms of $h$ under minimal regularity assumptions are obtained. We finally prove
that solutions to the Poisson problem for the fractional Laplacian
\[
(-\Delta)^sU=F,
\]
in $\R$, can be approximated by solutions to the Dirichlet problem for our fractional
discrete Laplacian, with explicit uniform error estimates in terms of~$h$.
\end{abstract}

\maketitle

\section{Introduction and main results}
\label{Intro}

The fractional Laplacian, understood as a positive power of the classical Laplacian,
has been present for a long time in several areas of mathematics, like
potential theory, harmonic analysis, fractional calculus, functional analysis and probability
\cite{Getoor, Landkof, japoneses, StSingular}.
However, although this operator appeared in some differential equations in physics \cite{MK},
it was not until the past decade when it became a very popular object in
the field of partial differential equations. Indeed, nonlocal diffusion
equations involving fractional Laplacians have been
one of the most studied research topics in the present century.
The fractional  Laplacian on $\R^n$ is defined,
for $0<s<1$ and good enough functions $U$, as
\begin{equation}
\label{eq:principio}
(-\Delta)^sU(x) =  c_{n,s}\PV \int_{\R^n} \frac{U(x)-U(y)}{|x-y|^{n+2s}}\,dy,
\end{equation}
for $x\in\R^n$, where $c_{n,s}>0$ is an explicit constant, see \cite{Landkof, Stinga-Torrea}.
We could say that the triggers that produced the outbreak in the field
were the papers by L.~Caffarelli and L.~Silvestre \cite{Caffarelli-Silvestre}
and L.~Silvestre \cite{Silvestre-CPAM}.
Since the appearance of those works there has been
a substantial revision of a big amount of problems in differential equations where the
Laplacian is replaced by the fractional Laplacian or
more general integro-differential operators, see for example \cite{Caffarelli, Caffarelli-Stinga, Gale,
Roncal-Stinga, RS, Savin-Valdinoci-1, Savin-Valdinoci-2, Silvestre-CPAM, Stinga, Stinga-Torrea} and references
therein for models and techniques. On the other hand,
there is the basic question of approximating the continuous problems by discrete
ones. The large literature includes numerical approximations of different sorts,
see for example \cite{Acosta-Borthagaray, Bonito-Pasciak, DelTeso, Huang-Oberman, Lenzmann, Nochetto}
and references therein. The main difficulties to overcome in numerical approaches are the nonlocality
and singularity of the operator \eqref{eq:principio}.
In any case, it is expected for discrete jump models to approximate continuous jump
models in a good way as the size of the mesh goes to zero \cite{MK}, a question that we also address here.

One of our aims in this paper is to present a quite complete study of nonlocal discrete diffusion equations
involving the fractional powers of the discrete Laplacian
$$
(-\Delta_h)^su
$$
and show how they can be used to approximate solutions to the Poisson problem for the fractional Laplacian
\begin{equation}
\label{eq:PoissonProblem}
(-\Delta)^sU=F,\quad\text{in}~\R.
\end{equation}

We describe next our main results.

Along the paper we consider a mesh of fixed size $h>0$ on $\R$ given by
$\Z_h = \{ hj:j\in\Z\}$. For a function $u:\Z_h\to\R$ we use the notation
$u_j=u(hj)$
to denote the value of $u$ at the mesh point $hj\in\Z_h$.
The discrete Laplacian $\Delta_h$ on $\Z_h$ is then given by
\[
-\Delta_hu_j = -\frac{1}{h^2} (u_{j+1}-2u_{j}+u_{j-1}).
\]
For $0<s<1$, we define the \textit{fractional powers of the discrete Laplacian} $(-\Delta_h)^su$ on $\Z_h$
with the semigroup method (see \cite{Stinga, Stinga-Torrea}) as
\begin{equation}
\label{definition}
(-\Delta_h)^su_j=\frac{1}{\Gamma(-s)}\int_0^\infty\big(e^{t\Delta_h}u_j-u_j\big)\,\frac{dt}{t^{1+s}}.
\end{equation}
Here $w_j(t)=e^{t\Delta_h}u_j$ is the solution  to the semidiscrete heat equation
\begin{equation}
\label{eq:heatequation}
\begin{cases}
\partial_tw_j=\Delta_hw_j, &\text{in}~\Z_h\times(0,\infty),\\
w_j(0)=u_j, &\text{on}~\Z_h,
\end{cases}
\end{equation}
(see Section~\ref{sec:proofBasic}) and $\Gamma$ denotes the Gamma function.
As we will see, our technique provides a formula that gives the exact solution of this equation
and that is central along the paper.
This can be applied for fixed $h>0$, but obviously not for non-uniform meshes, for instance.

\begin{thm}[Pointwise nonlocal formula and limits]
\label{thm:basicProperties}
For $0\leq s\leq1$, we let
\begin{equation}
\label{ls}
\ell_{\pm s} := \Big\{u:\Z_h\to\R: \|u\|_{\ell_{\pm s}} 
:= \sum_{m\in \Z} \frac{|u_{m}|}{(1+|m|)^{1 \pm 2s}}<\infty\Big\}.
\end{equation}
\begin{enumerate}[$(a)$]
\item \label{aa} If $0<s<1$ and $u\in\ell_{s}$ then
\begin{equation}
\label{eq:puntualdiscreta}
(-\Delta_h)^su_{j} = \sum_{m\in\Z,m\neq j}
\big(u_{j}-u_{m}\big) K^h_s(j-m),
\end{equation}
where the discrete kernel $K^h_s$ is given by
\begin{equation}
\label{eq:frKernelOned}
K^h_s(m)=\frac{4^s\Gamma(1/2+s)}{\sqrt{\pi}|\Gamma(-s)|}\cdot\frac{\Gamma(|m|-s)}{h^{2s}\Gamma(|m|+1+s)},
\end{equation}
for any $m\in\Z\setminus\{0\}$, and $K_s^h(0)=0$.
\item \label{cc} For $0<s<1$ there exist constants $0<c_s\leq C_s$ such that, for any $m\in\Z\setminus\{0\}$,
\begin{equation}
\label{eq:frKernelEst}
\frac{c_s}{h^{2s}|m|^{1+2s}}\leq K^h_s(m) \leq \frac{C_{s}}{h^{2s}|m|^{1+2s}}.
\end{equation}
\item \label{bb} If $u\in\ell_{0}$ then
$\displaystyle \lim_{s\to0^+}(-\Delta_h)^su_{j}=u_{j}$.
\item  If $u$ is bounded then
$\displaystyle \lim_{s\to1^-}(-\Delta_h)^su_{j}=-\Delta_hu_{j}.$
\end{enumerate}
\end{thm}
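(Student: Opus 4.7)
The plan is to exploit the semigroup definition \eqref{definition} together with an explicit representation of the semidiscrete heat flow on $\Z_h$. As developed in Section~\ref{sec:proofBasic}, the unique solution of \eqref{eq:heatequation} is
\[
e^{t\Delta_h}u_j = \sum_{m\in\Z} e^{-2t/h^2}\,I_{|j-m|}(2t/h^2)\,u_m,
\]
where $I_k$ is the modified Bessel function of the first kind of order $k$. The classical identity $e^{-\tau}\sum_{k\in\Z}I_{|k|}(\tau)=1$ rewrites
\[
e^{t\Delta_h}u_j-u_j = \sum_{m\neq j}e^{-2t/h^2}\,I_{|j-m|}(2t/h^2)\,(u_m-u_j).
\]
Substituting into \eqref{definition}, interchanging sum and integral (which is justified for $u\in\ell_s$ once the integrated kernel is seen to decay like $|j-m|^{-1-2s}$), and factoring $1/\Gamma(-s)=-1/|\Gamma(-s)|$ yields the pointwise formula in (a) with
\[
K_s^h(m) = \frac{1}{|\Gamma(-s)|}\int_0^\infty e^{-2t/h^2}\,I_{|m|}(2t/h^2)\,\frac{dt}{t^{1+s}}.
\]
Changing variables $\tau=2t/h^2$ and invoking the classical integral
\[
\int_0^\infty e^{-\tau}\,I_\nu(\tau)\,\tau^{-1-s}\,d\tau = \frac{2^s\,\Gamma(\nu-s)\,\Gamma(1/2+s)}{\sqrt{\pi}\,\Gamma(\nu+1+s)}
\]
then produces \eqref{eq:frKernelOned}. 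For (b), Stirling shows that $|m|^{1+2s}\,\Gamma(|m|-s)/\Gamma(|m|+1+s)$ is strictly positive and tends to a positive finite limit as $|m|\to\infty$, hence is bounded above and below uniformly in $m\in\Z\setminus\{0\}$; the $s$-dependent prefactor in \eqref{eq:frKernelOned} is absorbed into the constants $c_s$ and $C_s$.

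To handle the limits (c) and (d) I would apply the pointwise formula in the split form $(-\Delta_h)^s u_j = u_j\,T_s - S_s(u)$ with $T_s = \sum_{m\neq j}K_s^h(j-m)$ and $S_s(u)=\sum_{m\neq j}u_m K_s^h(j-m)$. The total mass $T_s$ admits a closed form through a telescoping identity: writing $g(k)=\Gamma(k-s)/\Gamma(k+s)$ one verifies that $\Gamma(k-s)/\Gamma(k+1+s) = (g(k)-g(k+1))/(2s)$, which collapses the series to $T_s = 4^s\,\Gamma(1/2+s)/(\sqrt{\pi}\,h^{2s}\,\Gamma(1+s))$. For (c) this gives $T_s\to 1$ as $s\to 0^+$, while $S_s(u)\to 0$ by dominated convergence: the prefactor $1/|\Gamma(-s)|=s/\Gamma(1-s)$ forces $K_s^h(m)\to 0$ pointwise, and an $s$-uniform bound $K_s^h(m)\leq C/|m|$ valid for $s\in(0,1/2]$ dominates the series against the summable weight $|u_m|/(1+|m|)$ coming from $u\in\ell_0$. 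For (d), the same prefactor gives $K_s^h(m)\to 0$ for each $|m|\geq 2$, whereas at $|m|=1$ the $\Gamma(1-s)$ in the numerator exactly cancels the one hidden inside $|\Gamma(-s)|$ and a direct computation yields $K_s^h(\pm1)\to 1/h^2$; an $s$-uniform dominant of order $C/|m|^2$ valid for $s\in[1/2,1)$ applied to the bounded $u$ then collapses the surviving two terms into $(u_j-u_{j-1})/h^2+(u_j-u_{j+1})/h^2=-\Delta_h u_j$.

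The main obstacle is matching the singular and regular contributions in the two limits: in each case $1/|\Gamma(-s)|$ carries a simple zero that must be balanced against a divergent ingredient (the $1/s$ pole in the series defining the total mass when $s\to 0^+$, and the pole of $\Gamma(1-s)$ hidden in the $|m|=1$ kernel value when $s\to 1^-$). The telescoping identity that evaluates $T_s$ in closed form is the algebraic input that makes these cancellations transparent and reduces both limits to clean applications of dominated convergence.
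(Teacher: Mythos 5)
Your proposal is correct, and parts of it offer a genuinely different and somewhat cleaner route than the paper. Part $(a)$ is the same: write $e^{t\Delta_h}$ as a Bessel convolution, use $e^{t\Delta_h}1\equiv1$ to isolate the differences $u_j-u_m$, and evaluate the resulting integral with the tabulated formula $\int_0^\infty e^{-\tau}I_\nu(\tau)\tau^{-1-s}\,d\tau$; the interchange of sum and integral needs the same \emph{a posteriori} kernel decay, as you indicate. For $(b)$ you invoke Stirling for the ratio $\Gamma(|m|-s)/\Gamma(|m|+1+s)$, whereas the paper (Lemma~\ref{eq:lowerBound}) derives a sharper quantitative estimate $\bigl|\Gamma(|m|-s)/\Gamma(|m|+1+s)-|m|^{-1-2s}\bigr|\le C_s|m|^{-2-2s}$ via the Tricomi--Erd\'elyi integral representation~\eqref{eq:tricomi}; both yield the two-sided bound~\eqref{eq:frKernelEst}, but the paper's version is reused later (Lemma~\ref{lem:compa}) for the comparison with the continuous kernel, so you would still need the finer form eventually. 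The interesting divergence is in the limits. For $s\to0^+$ the paper splits the total mass $T_1=\sum_{m\neq0}K^1_s(m)$ into the pieces coming from $\int_0^1$ and $\int_1^\infty$ of the Bessel integral and pushes each through the small- and large-time asymptotics~\eqref{eq:asymptotics-zero-ctes},~\eqref{eq:asymptotics-infinite} together with the sum identity~\eqref{eq:sumIk}. Your telescoping observation
\[
\frac{\Gamma(k-s)}{\Gamma(k+1+s)}=\frac{1}{2s}\Bigl(\frac{\Gamma(k-s)}{\Gamma(k+s)}-\frac{\Gamma(k+1-s)}{\Gamma(k+1+s)}\Bigr),
\]
which follows at once from $\Gamma(k+1\pm s)=(k\pm s)\Gamma(k\pm s)$, collapses the series to the closed form $T_s=\tfrac{4^s\Gamma(1/2+s)}{\sqrt{\pi}\,h^{2s}\Gamma(1+s)}$, bypassing all the Bessel asymptotics and making both cancellations (the $1/s$ pole against $1/|\Gamma(-s)|$ at $s\to0^+$; the $\Gamma(1-s)$ pole in $K_s^h(\pm1)$ against $|\Gamma(-s)|=\Gamma(1-s)/s$ at $s\to1^-$) completely algebraic. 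This is exactly the identity the paper uses (implicitly) in the mean-value remark following Theorem~\ref{thm:basicProperties}, but the paper does not deploy it in the proof of $(c)$. The dominated-convergence arguments you give for the remainder terms $S_s(u)$ are the same in spirit as the paper's estimates on $T_2$ and $S_2$: the factor $1/|\Gamma(-s)|=s/\Gamma(1-s)$ kills the limit pointwise, and the uniform dominants $C/|m|$ on $(0,1/2]$ and $C/|m|^2$ on $[1/2,1)$ hold because $A_s\to0$ at both endpoints while the Gamma ratio is uniformly $O(|m|^{-1})$ (resp.\ $O(|m|^{-2})$) on those subintervals. In short: same skeleton, but your treatment of the total mass is more elementary and self-contained, at the cost of not producing the refined kernel expansion the paper needs downstream.
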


The expression in \eqref{eq:puntualdiscreta} and the estimate in \eqref{eq:frKernelEst} show that
the fractional discrete Laplacian is a nonlocal operator on $\Z_h$ of order $2s$
(we precise this in Theorems~\ref{thm:Holder}~and~\ref{RegularityfractionalIntegral}).
Notice also that our definition \eqref{definition} is neither
a direct discretization of the pointwise formula for the fractional Laplacian \eqref{eq:principio}, nor
a ``discrete analogue'', but the $s$-fractional power of the discrete Laplacian.
In this regard, we warn the reader that the notation $(-\Delta_h)^{\alpha/2}$, $0<\alpha<2$,
used in \cite{Huang-Oberman}
does not refer to the fractional power of the discrete Laplacian \eqref{eq:puntualdiscreta}, but
to a specific way of discretizing the pointwise formula in \eqref{eq:principio}.
The constant
\begin{equation}
\label{eq:constante1d}
A_{s} := \frac{4^s\Gamma(1/2+s)}{\sqrt{\pi}|\Gamma(-s)|}>0,
\end{equation}
which appears in the kernel $K_s^h(m)$, see \eqref{eq:frKernelOned},
is exactly the same constant $c_{n,s}>0$ in the formula for
the fractional Laplacian \eqref{eq:principio} when $n=1$.

\begin{rem}[Mean value formula and probabilistic interpretation]
Let $u$ be a discrete harmonic function on $\Z_h$, that is,
$-\Delta_hu=0$. This is equivalent as saying that $u$ satisfies the discrete mean value property:
\[
u_{j} = \frac12u_{j+1} +\frac12 u_{j-1}.
\]
This identity shows that a
discrete harmonic function describes the random movement of a particle that jumps either to the adjacent left
point or to the adjacent right point with probability~$1/2$.
Suppose now that $u$ is a fractional discrete harmonic function, that is, $(-\Delta_h)^su_j=0$. Then
from \eqref{eq:puntualdiscreta} we have the following nonlocal mean value property:
\[
u_j=\frac{1}{\Sigma^h_s}\sum_{m\in\Z,m\neq j}u_{m}K^h_s(j-m)=:\sum_{m\in\Z}u_mP_s(j-m),
\]
where $\Sigma^h_s:=\sum_{m\in\Z}K^h_s(m)=A_sh^{-2s}/s=
\frac{4^s\Gamma(1/2+s)}{h^{2s}\sqrt{\pi}\,\Gamma(1+s)}$,
so that $P_s(m)$ is a probability density on $\Z$ with $P_s(0)=0$ which is \textit{independent of $h>0$}.
In a parallel way we understand this last identity by saying that a
fractional discrete harmonic function describes a particle
that is allowed to randomly jump to any point on $\Z_h$ (not only to the adjacent ones)
and that the probability to jump from the point $hj$ to the point $hm$ is $P_s(j-m)$.
By \eqref{eq:frKernelEst} this probability is proportional to $|j-m|^{-(1+2s)}$.
As $s\to1^-$ the probability to jump from $j$ to a non adjacent point tends to zero, while the probability
to jump to an adjacent point tends to one, recovering in this way the previous situation. As $s\to0^+$,
the probability to jump to any point tends to zero, so there are no jumps.
\end{rem}

The solution to the fractional discrete Poisson problem $(-\Delta_h)^su=f$ in $\Z_h$ is
realized by the \textit{negative powers of the discrete Laplacian},
which are also called the \textit{fractional discrete integrals}.
They are defined, for $s>0$ and a function $f:\Z_h\to\R$, as
\begin{equation}
\label{eq:fractionalintegralsemigrupo}
(-\Delta_h)^{-s}f_{j} = \frac{1}{\Gamma(s)} \int_0^{\infty}e^{t\Delta_h}f_{j}\,\frac{dt}{t^{1-s}}.
\end{equation}
The kernel of $(-\Delta_h)^{-s}$ is the fundamental solution of $(-\Delta_h)^s$.

\begin{thm}[Fundamental solution and Hardy--Littlewood--Sobolev inequality]\label{lem:kernelFractionalIntegral}
Let us fix $0<s<1/2$ and let $f\in\ell_{-s}$ (see \eqref{ls}).
\begin{enumerate}[$(a)$]
\item We have the pointwise formula
\begin{equation}
\label{formulapuntualpotenciasnegativas}
(-\Delta_h)^{-s}f_j=\sum_{m\in\Z}K^h_{-s}(j-m)f_{m},
\end{equation}
where, for $m\in\Z$, the discrete kernel $K^h_{-s}(m)$ is given by
\begin{equation}
\label{eq:kernelFrIntegralOned}
K_{-s}^h(m)=\frac{4^{-s}\Gamma(1/2-s)}{\sqrt{\pi}\,\Gamma(s)}\cdot\frac{\Gamma(|m|+s)}{h^{-2s}\Gamma(|m|+1-s)}.
\end{equation}
\item There exist positive constants  $c_s$, $C_s$ and $d_s\leq D_s$
such that, for $m\in\Z\setminus\{0\}$,
\begin{equation}
\label{eq:growthFractional0}
\frac{d_s}{h^{-2s}|m|^{1-2s}}\leq K_{-s}^h(m)\leq\frac{D_s}{h^{-2s}|m|^{1-2s}},
\end{equation}
and
\begin{equation}
\label{eq:growthFractional}
\bigg| K_{-s}^h(m) - \frac{c_{s}}{h^{-2s}|m|^{1-2s}}\bigg| \le \frac{C_{s}}{h^{-2s}|m|^{2-2s}}.
\end{equation}
\item Let $1<p<q<\infty$ such that $1/q\leq1/p-2s$.
There exists a constant $C>0$, depending only on $p$, $q$ and $s$, such that if $f\in\ell^p_h$
(see \eqref{Lph})
then $(-\Delta_h)^{-s}f\in\ell^q_h$ and
\begin{equation}
\label{HLS}
\|(-\Delta_h)^{-s}f\|_{\ell^q_h} \leq\frac{C}{h^{1/p-2s-1/q}}\|f\|_{\ell^p_h}.
\end{equation}
\end{enumerate}
\end{thm}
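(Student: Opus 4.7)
The plan is to address parts (a), (b), (c) in sequence, leveraging the semigroup definition \eqref{eq:fractionalintegralsemigrupo} together with the explicit representation of the semidiscrete heat semigroup $e^{t\Delta_h}$ in terms of modified Bessel functions (promised in Section~\ref{sec:proofBasic} as ``the exact solution''). That formula reads
\[
e^{t\Delta_h}f_j = e^{-2t/h^2}\sum_{m\in\Z}I_{|j-m|}(2t/h^2)\,f_m.
\]
Substituting into \eqref{eq:fractionalintegralsemigrupo} and exchanging the order of summation and integration—legitimate because $f\in\ell_{-s}$ and, a posteriori via (b), the resulting kernel is nonnegative and summable against such sequences—gives
\[
(-\Delta_h)^{-s}f_j = \sum_{m\in\Z}f_m\cdot\frac{1}{\Gamma(s)}\int_0^\infty e^{-2t/h^2}I_{|j-m|}(2t/h^2)\,\frac{dt}{t^{1-s}}.
\]
A change of variables $\tau=2t/h^2$ and the classical Weber--Schafheitlin identity
\[
\int_0^\infty e^{-\tau}I_\nu(\tau)\tau^{s-1}\,d\tau = \frac{\Gamma(\nu+s)\,\Gamma(1/2-s)}{\sqrt{\pi}\,2^s\,\Gamma(\nu+1-s)},
\]
valid precisely for $-\nu<s<1/2$, yield formula \eqref{eq:kernelFrIntegralOned} for $K^h_{-s}(m)$ and explain the restriction $s<1/2$ in the hypotheses.

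For (b), I would use the Stirling-type asymptotic
\[
\frac{\Gamma(n+s)}{\Gamma(n+1-s)} = n^{2s-1}\Bigl(1+O(n^{-1})\Bigr),\qquad n\to\infty,
\]
to extract the leading behavior of \eqref{eq:kernelFrIntegralOned}, which immediately produces the remainder bound \eqref{eq:growthFractional} and, after checking the finitely many small-$m$ cases directly, the two-sided estimate \eqref{eq:growthFractional0}. The positivity of the constants $c_s, d_s, D_s, C_s$ follows from the positivity and monotonicity properties of the Gamma ratios.

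For (c), the idea is to reduce \eqref{HLS} to the classical discrete Hardy--Littlewood--Sobolev inequality on $\Z$ (with unweighted counting measure). The upper bound in \eqref{eq:growthFractional0} gives the pointwise domination
\[
|(-\Delta_h)^{-s}f_j|\leq D_s\,h^{2s}\sum_{m\neq j}\frac{|f_m|}{|j-m|^{1-2s}},
\]
whose right-hand side is a standard discrete Riesz potential of order $2s$, bounded from $\ell^p(\Z)$ to $\ell^{q_0}(\Z)$ at the critical index $1/q_0=1/p-2s$. Converting between $\|\cdot\|_{\ell^p(\Z)}$ and $\|\cdot\|_{\ell^p_h}$ via the scaling $\|g\|_{\ell^p(\Z)}=h^{-1/p}\|g\|_{\ell^p_h}$, one finds that at the endpoint the powers of $h$ cancel and
\[
\|(-\Delta_h)^{-s}f\|_{\ell^{q_0}_h}\leq C\,\|f\|_{\ell^p_h}.
\]
For the range $1/q\leq 1/p-2s$ stated in the theorem, apply the discrete embedding $\ell^{q_0}_h\hookrightarrow \ell^q_h$ (valid when $q\geq q_0$, with inclusion constant $h^{1/q-1/q_0}$) to produce the extra factor $h^{-(1/p-2s-1/q)}$ in \eqref{HLS}. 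The main technical obstacle is the careful verification of the Weber integral identity and the accompanying justification of the Fubini interchange, since both pieces rely on the borderline decay of $e^{-\tau}I_\nu(\tau)\sim(2\pi\tau)^{-1/2}$ at infinity, which is exactly what forces the restriction $0<s<1/2$.
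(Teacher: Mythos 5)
Your proposal follows essentially the same route as the paper: for (a), substitute the Bessel-function heat kernel $G(m,t)=e^{-2t}I_{|m|}(2t)$ into the semigroup formula and invoke the Laplace/Mellin integral $\int_0^\infty e^{-ct}I_\nu(ct)t^{\alpha-1}\,dt = \frac{(2c)^{-\alpha}}{\sqrt{\pi}}\frac{\Gamma(1/2-\alpha)\Gamma(\alpha+\nu)}{\Gamma(\nu+1-\alpha)}$ (the paper cites this as Prudnikov's formula, not Weber--Schafheitlin, but the identity is the same); for (b), expand the Gamma ratio (the paper does this through the Tricomi--Erd\'elyi integral representation rather than Stirling, which yields a uniform bound in $m$ instead of an asymptotic plus finitely many checks, but the output is identical); for (c), dominate by the discrete Riesz potential and apply Stein--Wainger, then convert between $\ell^p(\Z)$ and $\ell^p_h$ norms.

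There is, however, one genuine oversight in part (c). The pointwise domination you write,
\[
|(-\Delta_h)^{-s}f_j|\leq D_s\,h^{2s}\sum_{m\neq j}\frac{|f_m|}{|j-m|^{1-2s}},
\]
is false as stated, because unlike the kernel $K^h_s$ of the positive power, the kernel $K^h_{-s}$ of the negative power does \emph{not} vanish at the origin: from \eqref{eq:kernelFrIntegralOned}, $K^h_{-s}(0)=\frac{4^{-s}\Gamma(1/2-s)}{\sqrt{\pi}\,h^{-2s}\,\Gamma(1-s)}>0$, and estimate \eqref{eq:growthFractional0} only covers $m\neq 0$. The diagonal contribution $K^h_{-s}(0)\,|f_j|$ must be added and handled separately. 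This is precisely what the paper does: it splits off the $m=j$ term as $K^h_{-s}(0)f_j$, bounds it by $C_s h^{2s}|f_j|$, and then uses $\ell^p\subset\ell^q$ (or $\ell^p_h\hookrightarrow\ell^{q_0}_h$ with constant $h^{1/q_0-1/p}=h^{-2s}$, cancelling the $h^{2s}$) to absorb it into the final estimate. With that term reinstated, the rest of your argument — endpoint HLS on $\ell^p(\Z)\to\ell^{q_0}(\Z)$ at $1/q_0=1/p-2s$ with the $h$-powers cancelling, followed by the embedding $\ell^{q_0}_h\hookrightarrow\ell^q_h$ with constant $h^{1/q-1/q_0}=h^{-(1/p-2s-1/q)}$ — goes through and reproduces \eqref{HLS}.
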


It is worth comparing formula \eqref{eq:kernelFrIntegralOned} for the
kernel of the fractional discrete integral $K_{-s}^h(m)$
with the one for the kernel of the fractional discrete Laplacian $K_s^h(m)$ in \eqref{eq:frKernelOned}.
We also point out that the factor of $h$ disappears from the right hand side of \eqref{HLS}
when we reach the critical exponent $q=p/(1-2sp)$. As before, \eqref{formulapuntualpotenciasnegativas}
is the $(-s)$-power of the discrete Laplacian, not a ``discrete analogue'' as that of \cite{Stein-Wainger}.
Observe that the constant
\begin{equation}
\label{constantfractionalintegral}
A_{-s}:=\frac{4^{-s}\Gamma(1/2-s)}{\sqrt{\pi}\Gamma(s)}>0,
\end{equation}
appearing in the kernel $K_{-s}^h$, see \eqref{eq:kernelFrIntegralOned},
is exactly the same normalizing constant for the fractional integral
$(-\Delta)^{-s}$ in dimension one in which $0<s<n/2$, $n=1$,
see \cite{StSingular} and Theorem \ref{thm:continuousPoissonproblem}.

\begin{rem}[Extension problem for $(-\Delta_h)^s$ and $(-\Delta_h)^{-s}$]
\label{rem:extension problem}
The fractional powers of the discrete Laplacian
which, as we have seen in Theorems \ref{thm:basicProperties} and \ref{lem:kernelFractionalIntegral},
are nonlocal discrete operators, can be regarded as
boundary values (Dirichlet or Neumann) of the solution to
a local semidiscrete elliptic extension problem. This observation is just an application of the
general extension problem of \cite{Stinga, Stinga-Torrea}, see also \cite{Gale}.
Thus, the positive powers can be seen as Dirichlet-to-Neumann maps,
while the negative ones as Neumann-to-Dirichlet maps.
Indeed, consider the semidiscrete degenerate elliptic operator
$$
L_{a,h}= \Delta_h+\tfrac{a}{y}\partial_y+\partial_{yy},
$$
where $a=1-2s$ and $0<s<1$.
This operator acts on semidiscrete functions $w=w_j(y)=w(hj,y):\Z_h\times(0,\infty)\to\R$.
Given $u\in\Dom((-\Delta_h)^s)$, the semidiscrete function $w$ defined as
$$
w_j(y)= \frac{y^{2s}}{4^s \Gamma(s)} \int_0^\infty e^{-y^2/(4t)}e^{t\Delta_h}u_j\,\frac{dt}{t^{1+s}}
$$
is the unique solution (weakly vanishing as $y\to\infty$) to the Dirichlet problem
$$
\begin{cases}
L_{a,h}w=0,&\text{in } \Z_h\times(0,\infty),\\
w_j(0)=u_j,&\text{on } \Z_h.
\end{cases}
$$
Moreover,
$$
-\lim_{y\rightarrow 0^+}y^a\partial_yw_j(y)=
-2s\lim_{y\to0^+}\frac{w_j(y)-w_j(0)}{y^{2s}}=
\frac{\Gamma(1-s)}{4^{s-1/2}\Gamma(s)}(-\Delta_h)^su_j.
$$
Analogously, given $f\in\Dom((-\Delta_h)^{-s})$, the semidiscrete function $v$ defined as
$$
v_j(y)= \frac{1}{\Gamma(s)}\int_0^\infty e^{-y^2/(4t)}e^{t\Delta_h}f_j\,\frac{dt}{t^{1-s}},
$$
is the unique solution (weakly vanishing as $y\to\infty$) to the Neumann problem
$$
\begin{cases}
L_{a,h}v=0 &\text{in } \Z_h\times(0,\infty),\\
-y^a\partial_yv_j(y)\big|_{y=0^+}=f_j,&\text{on } \Z_h.
\end{cases}
$$
Moreover,
$$
\lim_{y\rightarrow 0^+}v_j(y)=\frac{4^{s-1/2}\Gamma(s)}{\Gamma(1-s)}(-\Delta_h)^{-s}f_j.
$$
It is obvious that if we have $(-\Delta_h)^su=\frac{4^{s-1/2}\Gamma(s)}{\Gamma(1-s)}f$ then $w=v$.
\end{rem}

We next go back to the fractional discrete Laplacian and show that it behaves
as a fractional discrete derivative of order $2s$ in discrete H\"older spaces.
This will be obtained by exploiting \eqref{eq:puntualdiscreta}. The following estimates
are parallel to the corresponding ones for the fractional Laplacian
(see \cite{Silvestre-CPAM}). For the definition of discrete H\"older spaces $C^{k,\alpha}_h$
see Definition~\ref{def:discreteH}.

\begin{thm}[Fractional discrete Laplacian in discrete H\"older spaces]
\label{thm:Holder}
Let $k\geq0$, $0<\alpha\le1$, $0<s<1$ and $u\in\ell_s$ (see \eqref{ls}).
\begin{enumerate}[$(i)$]
\item \label{(i)} If $u\in C^{k,\alpha}_h$ and $2s<\alpha$ then
$(-\Delta_h)^su\in C^{k,\alpha-2s}_h$ and
$$
[(-\Delta_h)^su]_{C^{k,\alpha-2s}_h}\leq C[u]_{C^{k,\alpha}_h}.
$$
\item \label{(iii)} If $u\in C^{k+1,\alpha}_h$ and $2s>\alpha$ then
$(-\Delta_h)^su\in C^{k,\alpha-2s+1}_h$ and
$$
[(-\Delta_h)^su]_{C^{k,\alpha-2s+1}_h}\leq C[u]_{C^{k+1,\alpha}_h}.
$$
\end{enumerate}
The constants $C>0$ appearing above are independent of $h>0$ and~$u$.
\end{thm}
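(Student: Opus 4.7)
The plan is to exploit the pointwise formula \eqref{eq:puntualdiscreta} and the sharp kernel bound \eqref{eq:frKernelEst}, mirroring Silvestre's argument for the continuous fractional Laplacian while keeping careful track of the mesh size $h$. Since $(-\Delta_h)^s$ commutes with the forward difference $\delta_h^+ u_j := (u_{j+1}-u_j)/h$ by translation invariance on $\Z_h$, one has $\delta_h^{+k}(-\Delta_h)^s u = (-\Delta_h)^s \delta_h^{+k} u$; this reduces both statements to the case $k=0$ applied to $v=\delta_h^{+k}u$. Using the evenness $K_s^h(n)=K_s^h(-n)$, I rewrite \eqref{eq:puntualdiscreta} in the symmetric second-order form
\[
(-\Delta_h)^s u_j = -\tfrac12\sum_{n\neq 0}(u_{j+n}-2u_j+u_{j-n})K_s^h(n),
\]
so that, setting $W := (-\Delta_h)^s u$ and $\Delta_{j,j',n} := (u_{j+n}-2u_j+u_{j-n})-(u_{j'+n}-2u_{j'}+u_{j'-n})$,
\[
W_j - W_{j'} = -\tfrac12\sum_{n\neq 0}\Delta_{j,j',n}\,K_s^h(n).
\]
I then fix $r := h|j-j'|>0$ and split the sum at $|n|h \leq r$ (near) and $|n|h > r$ (far).

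For case~(i), on the far piece I bound $|\Delta_{j,j',n}|\leq 4[u]_{C^{0,\alpha}_h}r^\alpha$ by four applications of the $\alpha$-Hölder seminorm, and on the near piece I use $|u_{j+n}-2u_j+u_{j-n}|\leq 2[u]_{C^{0,\alpha}_h}(|n|h)^\alpha$ at both $j$ and $j'$. Plugging in \eqref{eq:frKernelEst}, the residual sums reduce to $h^{-2s}\sum_{|n|>r/h}|n|^{-1-2s}\sim r^{-2s}$ and $h^{\alpha-2s}\sum_{0<|n|\leq r/h}|n|^{\alpha-1-2s}\sim r^{\alpha-2s}$, where in the latter the partial sum is finite and of the stated order precisely because $\alpha>2s$. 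Both contributions combine to give the desired $|W_j-W_{j'}|\leq C[u]_{C^{0,\alpha}_h}\,r^{\alpha-2s}$.

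For case~(ii), the extra derivative must be invested on both pieces. On the near piece I exploit the elementary telescoping identity
\[
u_{j+n}-2u_j+u_{j-n} = h\sum_{k=0}^{|n|-1}\bigl(\delta_h^+ u_{j+k}-\delta_h^+ u_{j-1-k}\bigr)
\]
together with the Hölder regularity of $\delta_h^+ u$ to obtain $|u_{j+n}-2u_j+u_{j-n}|\leq C[u]_{C^{1,\alpha}_h}(|n|h)^{1+\alpha}$. On the far piece, the analogous telescoping applied to $u_j-u_{j'}$ and to $u_{j\pm n}-u_{j'\pm n}$ yields $|\Delta_{j,j',n}|\leq 2[u]_{C^{1,\alpha}_h}\,r\,(|n|h)^\alpha$. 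Inserting \eqref{eq:frKernelEst} reduces matters to $h^{1+\alpha-2s}\sum_{0<|n|\leq r/h}|n|^{\alpha-2s}\sim r^{1+\alpha-2s}$ and $rh^{\alpha-2s}\sum_{|n|>r/h}|n|^{\alpha-1-2s}\sim r^{1+\alpha-2s}$, both valid since $\alpha<2s<1+\alpha$ (the upper inequality being exactly what is needed for the target exponent $1+\alpha-2s$ to be positive).

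The main delicate point is guaranteeing that all constants are uniform in $h>0$. This works out cleanly because the factor $h^{-2s}$ in~\eqref{eq:frKernelEst} combines with the powers $h^\alpha$ or $h^{1+\alpha}$ coming from the increments in exactly the right way, and because the summation endpoints depend only on the dimensionless ratio $r/h$; thus the arithmetic of the exponents inside each sum exactly matches that of the corresponding continuous integrals over $\R$. A secondary point is to be sure that the telescoping estimates depend only on seminorms and not on $\ell^\infty$ norms of discrete derivatives, which is why only \emph{pairwise} differences of $\delta_h^+ u$ appear throughout.
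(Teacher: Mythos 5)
Your proof is correct, and it follows the paper's strategy closely: the same reduction to $k=0$ via commutation of difference operators with $(-\Delta_h)^s$, the same split of the sum at $|n|\sim|j-j'|$, the same use of the kernel estimate~\eqref{eq:frKernelEst}, and in case~(ii) the same telescoping argument for the far piece. The one place where you genuinely diverge is the near piece of case~(ii). The paper decomposes $u_k-u_{k+m}=\bigl(hmD_+u_k-h\sum_{\gamma=0}^{m-1}D_+u_{k+\gamma}\bigr)-hmD_+u_k$ and then kills the linear contribution $\sum_m(hmD_+u_k)K^1_s(m)$ by the evenness of the kernel (see~\eqref{eq:nulo}). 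You instead start from the symmetric second-order rewrite $(-\Delta_h)^s u_j=-\tfrac12\sum_{n\neq0}(u_{j+n}-2u_j+u_{j-n})K^h_s(n)$ --- which is exactly the paper's~\eqref{eq:rewrite}, introduced only later in Section~\ref{sec:Dirichlet} --- and extract the quadratic gain $(h|n|)^{1+\alpha}$ directly from the antisymmetric pairing $\delta_h^+u_{j+k}-\delta_h^+u_{j-1-k}$ inside the centered second difference. This is the cleaner route: the cancellation is baked into the form rather than argued as a separate step, and it is closer to how the argument is usually run in the continuous setting. Two small points of bookkeeping: the discrete H\"older seminorm in Definition~\ref{def:discreteH} is built from all mixed differences $D_{+,-}^{\gamma,\eta}$ with $\gamma+\eta=k$, not just $D_+^k$, so the reduction should be stated for those (the commutation argument is identical); and your observation that the upper restriction $2s<1+\alpha$ is needed in~(ii) is correct --- it is implicit in the theorem, since otherwise the target exponent $\alpha-2s+1$ would be nonpositive and the space $C^{k,\alpha-2s+1}_h$ undefined.
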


The following result, which complements Theorem~\ref{thm:Holder},
contains the discrete Schauder estimates for the fractional discrete Laplacian.

\begin{thm}[Discrete Schauder estimates]
\label{RegularityfractionalIntegral}
Let $k\geq0$, $0<\alpha\leq1$, $0<s<1/2$ and $f\in \ell_{-s}$ (see~\eqref{ls}).
\begin{enumerate}[$(i)$]
\item If $f\in C_h^{k, \alpha} $ and $2s+\alpha <1$ then $(-\Delta_h)^{-s} f \in C^{k, \alpha+2s}_h$ and
$$
[ (-\Delta_h)^{-s} f ]_{C_h^{k,\alpha+2s}} \le C [ f]_{C_h^{k,\alpha}}.
$$
\item If $f\in C_h^{k, \alpha}$ and $2s+\alpha >1$ then $(-\Delta_h)^{-s} f \in C_h^{k+1, \alpha+2s-1}$ and
$$
[ (-\Delta_h)^{-s} f ]_{C_h^{k+1,\alpha+2s-1}} \le C [ f]_{C_h^{k,\alpha}}.
$$
\item If $f\in \ell^\infty_h$, see \eqref{Linftyh}, then $(-\Delta_h)^{-s} f \in C^{0,2s}_h$ and
$$
[ (-\Delta_h)^{-s} f ]_{C_h^{0,2s}} \le C \|f\|_{\ell^\infty_h}.
$$
\end{enumerate}
The constants $C>0$ appearing above are independent of $h>0$ and~$f$.
\end{thm}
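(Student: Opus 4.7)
The approach is to exploit the pointwise formula \eqref{formulapuntualpotenciasnegativas} together with the sharp kernel asymptotic \eqref{eq:growthFractional}, using a standard near/far splitting at the scale $|m|\sim 4|j|$. First I would reduce to the case $k=0$: since discrete finite differences $\delta_h^{\pm}$ commute with the translation-invariant convolution $(-\Delta_h)^{-s}$, one has $\delta_h^k[(-\Delta_h)^{-s}f]=(-\Delta_h)^{-s}[\delta_h^k f]$, and combined with the trivial estimate $[\delta_h^k f]_{C^{0,\alpha}_h}\leq [f]_{C^{k,\alpha}_h}$ this reduces all three parts to their $k=0$ versions.

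For the $k=0$ case of parts (i) and (iii), set $g=(-\Delta_h)^{-s}f$ and analyze, for $j\neq 0$,
$$
g_j-g_0=\sum_{m\in\Z}\bigl(K_{-s}^h(j-m)-K_{-s}^h(-m)\bigr)f_m.
$$
A truncation argument using $K_{-s}^h(n)\to 0$ (from \eqref{eq:growthFractional0}) shows that $f_0$ times the partial cancellation of the two kernel sums vanishes, so in part~(i) one may replace $f_m$ by $f_m-f_0$ and invoke Hölder $|f_m-f_0|\leq[f]_{C^{0,\alpha}_h}(h|m|)^\alpha$; in part~(iii) no subtraction is needed and one uses $|f_m|\leq\|f\|_{\ell^\infty_h}$ directly. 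Combining \eqref{eq:growthFractional} with the mean value theorem applied to $x\mapsto|x|^{2s-1}$ yields the key far-range bound
$$
\bigl|K_{-s}^h(j-m)-K_{-s}^h(-m)\bigr|\leq\frac{C_s|j|}{h^{-2s}|m|^{2-2s}}\qquad\text{for }|m|\geq 4|j|,
$$
whose associated sum converges thanks to $2s+\alpha<1$ (resp.\ $2s<1$), producing $C[f]_{C^{0,\alpha}_h}(h|j|)^{2s+\alpha}$ (resp.\ $C\|f\|_{\ell^\infty_h}(h|j|)^{2s}$). The near region $|m|\leq 4|j|$ is treated with the crude bound $K_{-s}^h(n)\leq Ch^{2s}|n|^{2s-1}$ from \eqref{eq:growthFractional0} together with $\sum_{0<|n|\leq 5|j|}|n|^{2s-1}\leq C|j|^{2s}$, delivering the same scaling.

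Part~(ii) is more delicate since $2s+\alpha>1$ makes the previous far sum diverge. My plan is to differentiate first: write $\delta_h g_j=\sum_m\delta_h K_{-s}^h(j-m)\,f_m$, where the improved decay $|\delta_h K_{-s}^h(n)|\leq Ch^{2s-1}(|n|+1)^{2s-2}$ and the telescoping identity $\sum_m\delta_h K_{-s}^h(n)=0$ are available. A second-order kernel cancellation
$$
\bigl|\delta_h K_{-s}^h(j-m)-\delta_h K_{-s}^h(-m)\bigr|\leq Ch^{2s-1}|j||m|^{2s-3}\qquad(|m|\geq 4|j|),
$$
again from the mean value theorem, makes the far sum convergent under the harmless condition $2s+\alpha<2$. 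I would then split $\Z$ into three zones—near $0$, near $j$, and the complement—subtracting $f_0$ in the first and $f_j$ in the second, and invoking the corresponding Hölder bound. The residual constant-mode term collapses, after using $\sum_m\delta_h K_{-s}^h=0$ across all zones, to $(f_j-f_0)S_B$ for an auxiliary kernel sum $S_B$; a sharp estimate $|S_B|\leq Ch^{2s-1}|j|^{2s-1}$ coming from evaluating $\delta_h K_{-s}^h$ at distance $\sim|j|$, together with $|f_j-f_0|\leq[f]_{C^{0,\alpha}_h}(h|j|)^\alpha$, yields exactly the target scale $(h|j|)^{2s+\alpha-1}$ of the seminorm $C^{0,\alpha+2s-1}_h$.

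The main obstacle I foresee is in part~(ii): the piecewise subtraction leaves behind a constant-mode remainder whose control demands the refined kernel estimate at distance $|j|$ rather than the coarse pointwise bound, and matching every zonal contribution (near-$0$, near-$j$, and far) to the borderline scale $(h|j|)^{2s+\alpha-1}$ requires delicate bookkeeping. A smaller recurring issue throughout is the rigorous justification, via truncation at $|m|\leq N$ and the decay of $K_{-s}^h$ at infinity, of the various rearrangements and constant-insertions inside sums that are only conditionally convergent.
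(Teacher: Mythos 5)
Your overall architecture matches the paper's: reduction to $k=0$ via commutation of differences with the convolution, the near/far split at scale $\sim|j|$, the subtraction of $f_0$ justified by a global cancellation of the translated kernel, and the passage in part (ii) to first differences of the kernel with a second-order cancellation for the far range. The part you do not have, and that the paper proves and leans on, is the integral representation
\[
K^1_{-s}(m)=C_s H_s(|m|),\qquad
H_s(r)=\int_0^\infty e^{-(r+s)v}(1-e^{-v})^{-2s}\,dv,
\]
obtained from the Tricomi--Erd\'elyi identity \eqref{eq:tricomi}, together with Lemma~\ref{derivative} giving $|H_s^{(k)}(r)|\le C_{k,s}(r+s)^{-(k+1-2s)}$ for every $k\ge0$. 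This is what licenses the mean value theorem argument you want to invoke.

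The concrete gap is in the kernel difference estimates. You claim the far-range bound
$\bigl|K^h_{-s}(j-m)-K^h_{-s}(-m)\bigr|\le C_s\,|j|\,h^{2s}|m|^{2s-2}$ and attribute it to combining \eqref{eq:growthFractional} with the mean value theorem applied to $x\mapsto|x|^{2s-1}$. That derivation does not work: \eqref{eq:growthFractional} is a one-term error bound $K^h_{-s}(m)=c_s h^{2s}|m|^{2s-1}+E(m)$ with $|E(m)|\le C_sh^{2s}|m|^{2s-2}$, and while the mean value theorem does give the $|j|$-gain for the difference of the model terms $|x|^{2s-1}$, you have no cancellation at all for the remainders $E(j-m)-E(-m)$, which are only bounded by $C_sh^{2s}|m|^{2s-2}$ when $|m|\gtrsim|j|$. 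So the best you can truthfully extract from the tools you cite is $O(h^{2s}|m|^{2s-2})$, not $O(|j|h^{2s}|m|^{2s-2})$. In part (i) this is harmless, since $\sum_{|m|>4|j|}|m|^{\alpha+2s-2}\sim|j|^{\alpha+2s-1}\le|j|^{\alpha+2s}$; the estimate goes through anyway (with a slightly better power in the far range), so (i) and (iii) survive. But in part (ii) the analogous second-order claim $\bigl|\delta_h K^h_{-s}(j-m)-\delta_h K^h_{-s}(-m)\bigr|\le Ch^{2s-1}|j||m|^{2s-3}$ is indispensable: without the genuine $|j|$-gain and the extra power of $|m|^{-1}$, the far sum $\sum_{|m|>4|j|}|m|^{\alpha}$(kernel difference) diverges when $\alpha+2s>1$. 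A second difference of \eqref{eq:growthFractional}'s error term only yields $O(|m|^{2s-2})$, not $O(|j||m|^{2s-3})$: you would need to know that $E$ itself is twice "differentiable" in $m$ with the right decay, which is exactly what the $H_s$-representation plus Lemma~\ref{derivative} (with $k=2$) provides and what \eqref{eq:growthFractional} alone cannot.

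Two smaller remarks. Your truncation argument to justify replacing $f_m$ by $f_m-f_0$ is essentially the paper's Lemma~\ref{cero}, which shows $\sum_m\bigl(K^1_{-s}(m-j)-K^1_{-s}(m)\bigr)=0$ by a direct telescoping of the symmetric kernel; it is worth stating this cleanly rather than via an informal truncation, since the series is only conditionally convergent. And in part (ii) the paper keeps a single subtraction of $f_0$ after applying Lemma~\ref{cero} to the already first-differenced kernel, giving a near/far split with no residual constant-mode term; your three-zone decomposition with the auxiliary sum $S_B$ is more complicated and the extra bookkeeping it requires is avoidable.
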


Next we present what might be considered the most interesting results of this paper.
We show how the fractional discrete Laplacian approximates
the fractional Laplacian as $h\to0$ in the strongest possible sense, that is, in the uniform norm.
We need some notation.
Given a function $U=U(x):\R\to\R$, we define its restriction $r_hU:\Z_h\to\R$
to the mesh $\Z_h$ to be the discrete function (or sequence) $(r_hU)_j:=U(hj)$, for $hj\in\Z_h$.
The first approximation result considers uniform estimates for differences of the type
\[
\big\|(-\Delta_h)^s(r_hU)-r_h \big((-\Delta)^s U\big) \big\|_{\ell^\infty_h}
\]
in terms of the size $h$ of the mesh.
The estimates will certainly depend on the regularity of $U$,
which we take to be in a H\"older space $C^{k,\alpha}$
(see Definition~\ref{def:contH}).
The notation $D_+u$ refers to the discrete derivative of $u:\Z_h\to\R$,
see~\eqref{eq:definitiondiscretederivatives}.

\begin{thm}[Uniform comparison with fractional Laplacian]
\label{thm:consistencia1d}
Let $0<\alpha\le 1$, $0<s<1$.
\begin{enumerate}[$(i)$]
\item \label{(iCon)} If $U\in C^{0,\alpha}$ and $2s<\alpha$ then
\[
\|\, (-\Delta_h)^s(r_hU)-r_h((-\Delta)^sU)\, \|_{\ell_h^\infty}\leq C[U]_{C^{0,\alpha}}h^{\alpha-2s}.
\]
\item \label{(iiCon)} If $U\in C^{1,\alpha}$ and $2s<\alpha$ then
\[
\|D_+(-\Delta_h)^s(r_hU)-r_h(\tfrac{d}{dx}(-\Delta)^sU)\|_{\ell_h^\infty}\leq C[U]_{C^{1,\alpha}}h^{\alpha-2s}.
\]
\item \label{(iiiCon)} If $U\in C^{1,\alpha}$ and $\alpha<2s<1+\alpha$ then
\[
\|(-\Delta_h)^s(r_hU)-r_h((-\Delta)^sU)\|_{\ell_h^\infty}\leq C[U]_{C^{1,\alpha}}h^{\alpha-2s+1}.
\]
\item \label{(ivCon)} If $U\in C^{k,\alpha}$ and $k+\alpha-2s$
is not an integer then
\[
\|D_+^l(-\Delta_h)^s(r_hU)-r_h(\tfrac{d^{l}}{dx^{l}}(-\Delta)^sU)\|_{\ell_h^\infty}
\leq C[U]_{C^{k,\alpha}}h^{\alpha-2s+k-l},
\]
where $l$ is the integer part of $k+\alpha-2s$.
\end{enumerate}
The constants $C>0$ appearing above are independent of $h$ and~$U$.
\end{thm}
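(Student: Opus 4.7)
The plan is to exploit the pointwise nonlocal formula of Theorem~\ref{thm:basicProperties} and the singular integral representation \eqref{eq:principio}, matching them cell by cell on the mesh. At the grid point $x=hj$ I partition $\R$ into the intervals $I_m := [h(m-\tfrac12), h(m+\tfrac12)]$, $m\in\Z$, and write
\[
(-\Delta)^sU(hj) = A_s\,\PV\!\int_{I_j}\!\frac{U(hj)-U(y)}{|hj-y|^{1+2s}}\,dy + A_s\sum_{m\neq j}\int_{I_m}\!\frac{U(hj)-U(y)}{|hj-y|^{1+2s}}\,dy,
\]
while $(-\Delta_h)^s(r_hU)_j = \sum_{m\neq j}(U(hj)-U(hm))K_s^h(j-m)$. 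For each $m\neq j$ the cell-by-cell discrepancy splits as $J_m^{(1)}+J_m^{(2)}$, with
\[
J_m^{(1)} := (U(hj)-U(hm))\Bigl(K_s^h(j-m)-A_s\!\int_{I_m}\!\frac{dy}{|hj-y|^{1+2s}}\Bigr),\quad J_m^{(2)} := A_s\!\int_{I_m}\!\frac{U(y)-U(hm)}{|hj-y|^{1+2s}}\,dy.
\]
A key input is the refined asymptotic $K_s^h(m) = A_s h^{-2s}|m|^{-1-2s} + O(h^{-2s}|m|^{-3-2s})$, which comes from \eqref{eq:frKernelOned} together with Stirling (the vanishing of the coefficient $(a-b)(a+b-1)/2$ with $a=-s$, $b=1+s$ in the Gamma-ratio expansion kills the intermediate order) and mirrors the midpoint-rule error for $|hj-y|^{-1-2s}$ over the symmetric cell $I_m$.

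For part~(i), where $2s<\alpha$, the local piece over $I_j$ is bounded without cancellation by $[U]_{C^{0,\alpha}}\int_{|y-hj|\le h/2}|hj-y|^{\alpha-1-2s}\,dy \le C[U]_{C^{0,\alpha}}h^{\alpha-2s}$. H\"older regularity combined with the refined kernel asymptotic yields $|J_m^{(1)}|\le C[U]_{C^{0,\alpha}} h^{\alpha-2s}|j-m|^{\alpha-3-2s}$, and the pointwise bound $|U(y)-U(hm)|\le [U]_{C^{0,\alpha}}h^\alpha$ together with $|hj-y|\asymp h|j-m|$ on $I_m$ gives $|J_m^{(2)}|\le C[U]_{C^{0,\alpha}} h^{\alpha-2s}|j-m|^{-1-2s}$; both series are summable, producing the desired rate. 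Parts~(ii) and~(iv) then reduce to~(i) (or, when required, to~(iii)) applied to $U^{(l)}$ after commuting $D_+$ through the operator via $D_+(-\Delta_h)^s u_j = (-\Delta_h)^s(D_+ u)_j$ and using the continuous identity $\tfrac{d}{dx}(-\Delta)^s U = (-\Delta)^s U'$; the mismatch between $D_+^l(r_hU)$ and $r_h U^{(l)}$ is a Taylor remainder controlled in the relevant discrete H\"older norm by $Ch^\alpha[U]_{C^{k,\alpha}}$, which is absorbed into the final rate.

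Part~(iii) is the delicate case where $\alpha\le 2s$ makes the straightforward local bound diverge, and this is where I expect the main obstacle. The remedy is to exploit oddness of the kernel $|hj-y|^{-1-2s}$ about $y=hj$ and Taylor expand $U(y)=U(hj)+U'(hj)(y-hj)+R(y)$ with $|R(y)|\le[U]_{C^{1,\alpha}}|y-hj|^{1+\alpha}$: the linear term integrates to zero on the symmetric interval $I_j$, and the remainder contributes $O([U]_{C^{1,\alpha}}h^{1+\alpha-2s})$. The same first-order symmetrization has to be performed on each near-diagonal cell $I_m$, Taylor expanding $U$ around the center $hm$ and exploiting symmetry of $I_m$ about $hm$; here one must verify that the $O(h^{-2s}|m|^{-3-2s})$ remainder in the kernel asymptotics is sharp enough to cancel the otherwise $O(1)$ gradient term in the expansion while preserving summability in $m$. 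Once this symmetric cancellation is set up, the remaining estimates parallel part~(i) and reduce to convergent-series bookkeeping.
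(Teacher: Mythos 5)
Your cell decomposition, the kernel comparison, and the treatment of parts (i), (ii), (iv) essentially coincide with the paper's proof. Lemma~\ref{lem:compa} in the paper is precisely your ``match the kernel to the midpoint-rule cell integral'' step, except it only needs (and only proves, via Lemma~\ref{eq:lowerBound}) the coarser remainder $O(h^{-2s}|m|^{-2-2s})$; your observation that the next Stirling coefficient $(a-b)(a+b-1)/2$ actually vanishes (since $a+b-1=0$ with $a=-s$, $b=1+s$), giving $O(h^{-2s}|m|^{-3-2s})$, is correct but unnecessary, because $\sum_m |hm|^\alpha\, h^{-2s}|m|^{-2-2s} = C\,h^{\alpha-2s}$ already converges. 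For (ii) and (iv) you also follow the paper: commute $D_+$ with $(-\Delta_h)^s$, apply (i) to $U^{(l)}$, and control $D_+(r_hU)-r_h U'$ pointwise by $[U]_{C^{1,\alpha}}h^\alpha$ via the mean value theorem, then sum against the kernel.

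Part (iii) as written has a genuine gap. Your decomposition $J_m^{(1)}+J_m^{(2)}$ does not survive the passage to $C^{1,\alpha}$ regularity: $J_m^{(1)}$ contains $|U(hj)-U(hm)|\lesssim \|U'\|_{L^\infty} h|j-m|$, and after Taylor expanding $J_m^{(2)}$ about $hm$ the linear piece carries $U'(hm)\int_{I_m}\frac{y-hm}{|hj-y|^{1+2s}}\,dy$. Neither $\|U'\|_{L^\infty}$ nor $U'(hm)$ is controlled by the seminorm $[U]_{C^{1,\alpha}}$, so the claimed rate cannot follow, and the ``$O(h^{-2s}|m|^{-3-2s})$ remainder in the kernel asymptotics'' is irrelevant to this issue — it addresses only the difference $K_s^h(j-m)-A_s\int_{I_m}|hj-y|^{-1-2s}dy$, not the gradient term in $J_m^{(2)}$. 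What actually kills the gradient is a \emph{global} cancellation, not a cell-by-cell one: one must subtract the same affine polynomial $U(hj)+U'(hj)(y-hj)$, centered at the fixed grid point $hj$, from both the continuous and discrete representations. On the continuous side this is licensed by the oddness identity
$\sum_{m\in\Z}\int_{|y-h(j+m)|<h/2}\frac{hj-y}{|hj-y|^{1+2s}}\,dy=0$
(the paper's \eqref{eq:lem3}), and on the discrete side by the evenness $\sum_m m\,K_s^h(m)=0$. After that subtraction every term is bounded by $[U]_{C^{1,\alpha}}$ through expressions of the form $|\text{difference of derivatives}|\cdot|\text{length}|$, and the remaining bookkeeping matches your (i). If you insist on expanding about $hm$ cell by cell, you would need to write $U'(hm)=U'(hj)+(U'(hm)-U'(hj))$, bound the second piece by $[U]_{C^{1,\alpha}}(h|j-m|)^\alpha$, and then recombine all the $U'(hj)$ pieces from $J_m^{(1)}$ and $J_m^{(2)}$ into the global oddness identity — at which point you have reproduced the paper's argument, not the one you sketched.
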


Although the proof of Theorem~\ref{thm:consistencia1d} is not trivial,
one could say in a very na\"ive way that such a result
is in some sense announced by Theorem~\ref{thm:Holder}. Indeed,
the fractional discrete Laplacian maps $C^{\beta}_h$ into $C^{\beta-2s}_h$. The continuous version
of this property is also true for the fractional Laplacian, so the restriction of $(-\Delta)^sU$
to the mesh $\Z_h$ is in $C^{\beta-2s}_h$
whenever $U\in C^{\beta}$.
We also point out that $D_+$ in Theorem~\ref{thm:consistencia1d}
can be replaced by $D_-$, see~\eqref{eq:definitiondiscretederivatives}.

The second approximation statement
is the convergence of discrete solutions to continuous ones:
the solution to the Poisson problem for the fractional Laplacian
\eqref{eq:PoissonProblem} can be approximated by using the solution
to the Dirichlet problem for the fractional discrete Laplacian \eqref{ecdiscreta}.
For $R>0$, we set $B_R^h=\{hj\in \Z_h: |hj|<R\}$
and $B_R=(-R,R)\subset\R$.

\begin{thm}[Convergence of discrete solutions to continuous ones]
\label{thm:Convergence-result}
Let $0<\alpha,s<1$ such that $\alpha+2s<1$.
Let $F\in C^{0,\alpha}$ with compact support contained in an interval $B_{R_0}$, for some $R_0>0$.
Let $U\in C^{0,\alpha+2s}$ be the unique solution
to the Poisson problem \eqref{eq:PoissonProblem}
vanishing at infinity (see Theorem~\ref{thm:continuousPoissonproblem}
where this function $U$ is explicitly constructed).
Fix $h>0$ and let $f=r_hF$, the restriction of $F$ to~$\Z_h$. Let $u:\Z_h\to\R$ be
the unique solution (provided by Theorem~\ref{thm:discreteDirichletproblem}) to the discrete Dirichlet problem
\begin{equation}
\label{ecdiscreta}
\begin{cases}
(-\Delta_h)^su=f, & \text{ in } B_R^h, \\
u=0, & \text{ in } \Z_h\setminus B_R^h,
\end{cases}
\end{equation}
where $R>\max\{2R_0,h^{-\alpha}\}$. Then there is a constant $C>0$,
depending on $s$, $\alpha$ and $R_0$, but not on $R$ or $h$, such that
\begin{equation}
\label{LAestimacion}
\|u-r_hU\|_{\ell^\infty_h(B_R^h)}\leq C\|F\|_{C^{0,\alpha}}R^{2s}h^{\alpha}.
\end{equation}
\end{thm}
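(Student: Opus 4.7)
The plan is to compare $u$ with the restriction $\tilde U:=r_hU$ and control the error $e:=\tilde U-u$. First, by Theorem~\ref{thm:consistencia1d}(i) applied to $U\in C^{0,\alpha+2s}$ (which qualifies since $2s<\alpha+2s$), the consistency error
\[
g_j:=(-\Delta_h)^s\tilde U_j-f_j=(-\Delta_h)^s\tilde U_j-r_h((-\Delta)^sU)_j
\]
satisfies $\|g\|_{\ell^\infty_h}\le C[U]_{C^{0,\alpha+2s}}h^{\alpha}$. The continuous Schauder estimate $[U]_{C^{0,\alpha+2s}}\le C\|F\|_{C^{0,\alpha}}$ available from Theorem~\ref{thm:continuousPoissonproblem} upgrades this to $\|g\|_{\ell^\infty_h}\le C\|F\|_{C^{0,\alpha}}h^{\alpha}$. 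Because $u$ vanishes outside $B_R^h$ while $\tilde U$ does not, $e$ solves the discrete nonlocal Dirichlet problem
\[
(-\Delta_h)^se=g\ \text{in}\ B_R^h,\qquad e=\tilde U\ \text{on}\ \Z_h\setminus B_R^h,
\]
which I split as $e=e_1+e_2$: the source piece $e_1$ solves $(-\Delta_h)^se_1=g$ in $B_R^h$ with $e_1=0$ outside, and the boundary piece $e_2$ is discretely $s$-harmonic in $B_R^h$ with $e_2=\tilde U$ outside.

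To bound $e_1$, introduce $v_j:=(-\Delta_h)^{-s}(g\cdot\mathbf{1}_{B_R^h})_j$ on all of $\Z_h$. By \eqref{formulapuntualpotenciasnegativas} and the upper bound in \eqref{eq:growthFractional0},
\[
|v_j|\le D_sh^{2s}\|g\|_{\ell^\infty_h}\sum_{\substack{m\in B_R^h\\ m\neq j}}\frac{1}{|j-m|^{1-2s}}\le CR^{2s}\|g\|_{\ell^\infty_h},
\]
uniformly in $j$ and $h$: for $j\in B_R^h$ one uses $\sum_{k=1}^{2R/h}k^{2s-1}\le C(R/h)^{2s}$, while for $j$ outside $B_R^h$ one combines the cardinality $|B_R^h|\sim R/h$ with the pointwise bound $|j-m|^{2s-1}\le |j|^{2s-1}\le (R/h)^{2s-1}$. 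Since $(-\Delta_h)^s(v-e_1)=0$ in $B_R^h$ and $v-e_1=v$ outside, the discrete maximum principle---an immediate consequence of the strict positivity of $K_s^h$ in \eqref{eq:puntualdiscreta} applied at an interior extremum---gives $\|e_1\|_{\ell^\infty_h(B_R^h)}\le 2\|v\|_{\ell^\infty_h}\le CR^{2s}h^{\alpha}\|F\|_{C^{0,\alpha}}$.

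For $e_2$ the same maximum principle reduces matters to the exterior datum: $\|e_2\|_{\ell^\infty_h(B_R^h)}\le\sup_{\Z_h\setminus B_R^h}|\tilde U|$. The hypothesis $\alpha+2s<1$ forces $s<1/2$, so the explicit Riesz representation $U(x)=A_{-s}\int_\R|x-y|^{2s-1}F(y)\,dy$ (provided by Theorem~\ref{thm:continuousPoissonproblem}) together with $\supp F\subset B_{R_0}$ and $R\ge 2R_0$ yields $|x-y|\ge|x|/2$ for $|x|\ge R$, and hence $|U(x)|\le CR_0\|F\|_\infty|x|^{2s-1}$ on $|x|\ge R$. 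Invoking $R>h^{-\alpha}$ to write $R^{2s-1}=R^{2s}\cdot R^{-1}\le R^{2s}h^{\alpha}$ produces $\|e_2\|_{\ell^\infty_h(B_R^h)}\le CR^{2s}h^{\alpha}\|F\|_\infty$. Summing the bounds for $e_1$ and $e_2$ delivers \eqref{LAestimacion}.

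The main obstacle is the uniform-in-$h$ estimate $\|v\|_{\ell^\infty_h}\le CR^{2s}\|g\|_{\ell^\infty_h}$: the partial-sum bound for the kernel $K_{-s}^h$ must produce exactly the factor $h^{-2s}R^{2s}$ that cancels the $h^{2s}$ coming from \eqref{eq:growthFractional0}, and this must be checked uniformly in the base point $j\in\Z_h$ (both inside and outside $B_R^h$). The interplay between the truncation radius and the mesh size encoded in $R>h^{-\alpha}$, which trades one power of $R^{-1}$ for $h^\alpha$, is what ultimately allows both $\|e_1\|$ and $\|e_2\|$ to share the same scaling $R^{2s}h^\alpha$.
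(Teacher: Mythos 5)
Your proposal is correct in substance and arrives at the same estimate, but it re-derives the key maximum-principle bound by a genuinely different mechanism. The paper simply invokes Theorem~\ref{thm:DiscretePoisson1d} (the packaged maximum principle), whose proof rests on the explicit quadratic barrier $W(x)=4R^2-|x|^2$ of Lemma~\ref{lem:barrier} together with the positivity argument of Lemma~\ref{lem:maximum}. You instead split $e=e_1+e_2$ and, for the source piece, replace the barrier by the \emph{Green-function comparison} $v=(-\Delta_h)^{-s}(g\,\mathbf{1}_{B_R^h})$, bounding $\|v\|_{\ell^\infty_h}$ directly from the kernel estimate \eqref{eq:growthFractional0}. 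This route is valid and perhaps conceptually more transparent (it exhibits the $R^{2s}$ as a genuine Riesz-potential mass over $B_R^h$), but it carries an extra, implicit, non-trivial step that the barrier route avoids: you need $(-\Delta_h)^s(-\Delta_h)^{-s}\phi=\phi$ for finitely supported $\phi$, which for $1/4\le s<1/2$ cannot be justified by the $\ell^2$ Fourier calculus alone (the image $v$ need not lie in $\ell^2_h$) and requires the semigroup identity or a kernel computation. Also, one small slip in the uniform bound for $\|v\|_{\ell^\infty_h}$: for $j$ outside $B_R^h$ the inequality $|j-m|^{2s-1}\le|j|^{2s-1}$ is false in general (when $j$ and $m$ have the same sign, $|j-m|<|j|$ and, since $2s-1<0$, the inequality reverses); the correct argument is that the $\lesssim R/h$ summands $|j-m|^{2s-1}$ run over distances $d(j),d(j)+1,\dots,d(j)+O(R/h)$ and the resulting sum is $\lesssim(R/h)^{2s}$ by the integral test regardless of whether $d(j)$ is small or large. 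Your treatment of $e_2$ --- the decay of $U$ from Theorem~\ref{thm:continuousPoissonproblem}, the consistency input from Theorem~\ref{thm:consistencia1d}$(i)$, and the trade $R^{-1}\le h^\alpha$ from $R>h^{-\alpha}$ --- coincides with the paper's.
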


As far as the authors are aware, Theorem~\ref{thm:Convergence-result} is the first result where
error estimates in the $L^\infty$-norm for approximations of solutions to the Poisson problem
for the fractional Laplacian by a nonlocal discrete problem are proved.
We stress that the Poisson problem $(-\Delta)^sU=F$ in H\"older spaces is non variational
and $U$ and $u$ are classical solutions.
In the discrete problem \eqref{ecdiscreta} a Dirichlet boundary condition
means to prescribe the values of $u$ outside $B_R^h$ because
of the nonlocality of the fractional discrete Laplacian, see \eqref{eq:puntualdiscreta}.

On the part of the domain that is left out of the estimate \eqref{LAestimacion},
that is, outside of $B_R^h$, we are approximating $U$ by the zero function so,
in particular,
$$
\|u-r_hU\|_{\ell^\infty_h(\Z_h\setminus B_R^h)}=\|r_hU\|_{\ell^\infty_h(\Z_h\setminus B_R^h)}
\leq C\frac{\|F\|_{L^\infty}}{R^{1-2s}},
$$
where $C>0$ depends only on $s$, see \eqref{laestimaciondeloqueresta} and
also Theorem~\ref{thm:continuousPoissonproblem}. This estimate
is sharp. Indeed, suppose that
$$
\chi_{[-1,1]}(x)\leq F(x)\leq\chi_{[-2,2]}(x),\quad\text{for every } x\in\R.
$$
Then, for any $x>4$, in the notation of Theorem 9.9,
$$
U(x) = A_{-s}\int_{\R}\frac{F(y)}{|x-y|^{1-2s}}\,dy
\geq A_{-s}\int_{0}^1\frac{1}{(x-y)^{1-2s}}\,dy\geq \frac{C_s}{x^{1-2s}}.
$$

Another important fact we state in Theorem~\ref{thm:Convergence-result}
is the unique solvability of the discrete Dirichlet problem \eqref{ecdiscreta}.
We show this in Section~\ref{Section:Dirichlet},
see Theorem~\ref{thm:discreteDirichletproblem}.
On the way we need to prove the fractional discrete Sobolev embedding
$$
\|u\|_{\ell_h^{2/(1-2s)}}\leq C_s\|(-\Delta_h)^{s/2}u\|_{\ell^2_h}
$$
(in which $s<1/2$)
and obtain as a consequence, see Theorem~\ref{thm:SobolevandPoincare}, the fractional discrete Poincar\'e
inequality
$$
\|u\|_{\ell_h^{2}}\leq C_sh^s\big(\#_h\supp(u)\big)^s\|(-\Delta_h)^{s/2}u\|_{\ell^2_h},
$$
where $\#_hE$ denotes the number of points in the set $E$.
The proof of the latter inequality
is postponed until Subsection~\ref{subsection:Sobolev}.

Theorem~\ref{thm:Convergence-result}
will then be a consequence of Theorem~\ref{thm:Holder}$(i)$ and
the nonlocal discrete maximum principle we prove in
Section~\ref{sec:Dirichlet}, see Theorem~\ref{thm:DiscretePoisson1d}.
The presence of the factor $R^{2s}$ in \eqref{LAestimacion} is actually
natural in view of such a maximum principle.

We also claim in Theorem~\ref{thm:Convergence-result}
the existence of a unique, explicitly computed, classical solution $U$ to the Poisson problem~\eqref{eq:PoissonProblem}.
Though we believe this statement belongs to the folklore, we will present a self contained
proof showing that such solution is indeed $U(x)=(-\Delta)^{-s}F(x)$,
see Theorem~\ref{thm:continuousPoissonproblem}. Note that $(-\Delta)^{-s}$
defines a tempered distribution on $\R$ if and only if $s<1/2$, see \cite{Silvestre-CPAM}.
In addition, both the minimal regularity hypothesis $0<\alpha+2s<1$ 
and the explicit formula for $U$ in Theorem~\ref{thm:continuousPoissonproblem}
in terms of $(-\Delta)^{-s}$, as well as
the assumption $2s<\alpha$ in Theorem~\ref{thm:consistencia1d}$(i)$, that are used in the proof of Theorem~\ref{thm:Convergence-result}, imply $0<s<1/2$. The same range of $s$
is considered in \cite{Savin-Valdinoci-1} and \cite{Silvestre-CPAM}. On the other hand $U$ has the minimal regularity.
Hence no extra smoothness other than the correct one is assumed.

In an extremely na\"ive way, and parallel to Theorem~\ref{thm:consistencia1d},  one could think that estimate \eqref{LAestimacion} should be suggested by the value of the difference 
$$
\|(-\Delta_h)^{-s}(r_hF)-r_h((-\Delta)^{-s}F)\, \|_{\ell_h^\infty}. 
$$ 
However, in this case the situation is completely different. This is due to the absence of information about $(-\Delta_h)^s u $ in $\Z_h\setminus B_R^h$. Even more, in our Theorem~\ref{thm:Convergence-result} the presence of $R$
is essential as we showed a few lines above. Of course, different types of discrete problems could be chosen to approximate the Poisson problem \eqref{eq:PoissonProblem}. Our results could also raise the question of the approximation of the solution of the Dirichlet problem
$$
(-\Delta)^sV = G   \text{ in } B_R,  \quad  V=0 \text{ in }  \R \setminus  B_R,
$$
but we observe that, in this case, a preliminary discussion about the appropriate
definition of $(-\Delta)^s$ that is being used (regional, restriction of the global, etc) should be given.

It is important to notice that the optimal H\"older regularity of the solution
$u$ to the discrete Dirichlet problem \eqref{ecdiscreta} is not known.
We conjecture that $u\in C^{0,s}_h$. In any case, $u$,
being a sequence with only finitely many nonzero terms,
is a classical solution and our error estimate \eqref{LAestimacion} is explicit in terms of $h$ and~$R$.
Observe that, as $h$ tends to zero, the solution $u$ in \eqref{ecdiscreta}
must be found in a larger domain~$B_R^h$.

One of the main strategies used to obtain our results is the method of semigroups. Since the
semidiscrete heat semigroup is given in terms of modified Bessel functions, see Section~\ref{sec:proofBasic},
we will exhaustively use some properties and facts about these functions that we collect
in Subsection~\ref{sec:preliminaries}.

Some of our results can be easily extended
to higher dimensions (for example, the extension problem in Remark \ref{rem:extension problem})
and we leave this task to the interested reader.
In fact, it is possible to define the multi-dimensional discrete Laplacian and explicitly write down the
solution to the corresponding semidiscrete heat equation (which is
a key tool along our paper) as a discrete convolution
with a heat kernel given as a product of Bessel functions of different integer orders.
In this paper we present several fine results involving precise
estimates of Bessel and Gamma functions that appear to be quite
non trivial to mimic in higher dimensions.
The semigroup method we use is obviously independent of the dimension,
so semigroup formulas for fractional powers of multidimensional discrete Laplacians
can be written down.
However, closed pointwise formulas as explicit as the ones we discovered in
Theorems~\ref{thm:basicProperties}~and~\ref{lem:kernelFractionalIntegral},
that are crucial along the paper, will not be available anymore. Thus a different method must be found
to prove our results in higher dimensions and we pose this together with 
the H\"older regularity of the solution $u$ to \eqref{ecdiscreta}
as open problems. In any case, not only our main results are certainly novel,
but also our techniques, which involve the manipulation of the semidiscrete heat equation.
We mention related questions raised in \cite{japoneses, Zoia}. Those works deal
with discrete Laplacians only at the level of $L^2$-spaces. Instead, we work with H\"older
spaces, presenting estimates in the uniform norm (in this regard, our paper does not deal
with variational problems and techniques) with explicit dependence on $h$.
As a matter of fact, all our estimates recover the continuous ones as $h\to0^+$.

The structure of the paper is as follows.
Section~\ref{sec:proofBasic} is devoted to the proof of Theorem~\ref{thm:basicProperties}.
In Section~\ref{sec:dicreteFractional} we prove Theorem~\ref{lem:kernelFractionalIntegral}.
The proofs of Theorems~\ref{thm:Holder}~and~\ref{thm:consistencia1d} are
presented in Section~\ref{Section:dosteoremas}.
Section~\ref{Section:JoseLuis} contains the proof of Theorem~\ref{RegularityfractionalIntegral}.
The Dirichlet problem for the fractional discrete Laplacian is analyzed
in Section~\ref{Section:Dirichlet}. Section~\ref{sec:Dirichlet} contains the discrete maximum principle.
The proof of Theorem~\ref{thm:Convergence-result} is done in Section~\ref{sec:proofmain}.
Some technical lemmas, the properties of Bessel functions,
the proofs of the fractional discrete Sobolev and Poincar\'e inequalities
and the analysis of the Poisson problem \eqref{eq:PoissonProblem} are
all collected in Section~\ref{sec:technical}.
By $C_s,c_s,D_s,d_s$ we mean positive constants depending on $s$ that may change in each occurrence, while
by $C$ we will denote a constant independent of the significant variables.
The notation $B^h$ refers to generic discrete finite interval contained in~$\Z_h$.

\section{Proof of Theorem~\ref{thm:basicProperties}}
\label{sec:proofBasic}

Given $u:\Z_h\to\R$, the solution to the semidiscrete heat equation \eqref{eq:heatequation} can be written as
\begin{equation}
\label{semigrupo}
e^{t\Delta_h}u_{j} = \sum_{m\in \Z}G\big(j-m,\tfrac{t}{h^2}\big)u_{m}=
\sum_{m\in \Z}G\big(m,\tfrac{t}{h^2}\big)u_{j-m}, \quad t\geq0,
\end{equation}
where the semidiscrete heat kernel $G$ is defined as
\begin{equation}
\label{eq:semidiscreteheatkernel}
G(m,t) = e^{-2t} I_{m}(2t),\quad m\in \Z,~t\geq0.
\end{equation}
Here $I_\nu$ is the modified Bessel function of order $\nu$ that satisfies 
\[
  \frac{\partial}{\partial t} I_k(t) = \frac{1}{2} (I_{k+1}(t) + I_{k-1}(t)),
\]
and from this we have  immediately
\begin{equation}
\label{eq:derivadaCalort}
  \frac{\partial}{\partial t} (e^{-2t} I_k(2t)) = e^{-2t} (I_{k+1}(2t) - 2I_{k}(2t) + I_{k-1}(2t)).
\end{equation}
Then formula \eqref{semigrupo} for $h=1$ follows from \eqref{eq:derivadaCalort}.
For a fixed $h\neq 1$, the formula \eqref{semigrupo} follows  by scaling. See also \cite{FiveGuys,GrIl}.
By \eqref{eq:negPos} and \eqref{eq:Ik>0} the kernel $G(m,t)$ is
symmetric in $m$, that is, $G(m,t)=G(-m,t)$, and positive.

Let us begin now with the proof of Theorem~\ref{thm:basicProperties}.

First we check that if $u\in\ell_{s}$, $0\leq s\leq1$, then
$e^{t\Delta_h}u_{j}$ is well defined. Indeed, if $N>0$, for fixed $t,h>0$,
by using the asymptotic of the Bessel function for large order~\eqref{eq:asymptotics-order-large},
\begin{equation}
\label{eq:cuentita}
\begin{aligned}
\sum_{|m|>N}G\big(m,\tfrac{t}{h^2}\big)|u_{j-m}| &\le
Ce^{-2t/h^2} \sum_{|m|>N} \frac{(et/h^2)^{|m|}(1+|m-j|)^{1+2s}}
{|m|^{|m|+1/2}} \frac{|u_{m-j}|}{(1+|m-j|)^{1+2s}}\\
&\leq Ce^{-2t/h^2} \sup_{ |m|>N}\frac{(et/h^2)^{|m|}(1+|m|+|j|)^{1+2s}}
{|m|^{|m|+1/2}}\|u\|_{\ell_s}\\
&= C_{t,h,s,N,j}\|u\|_{\ell_s}<\infty.
\end{aligned}
\end{equation}

Next we prove each of the items of the statement of Theorem~\ref{thm:basicProperties}.

\noindent$(a)$. Define
\begin{equation}
\label{eq:kernelFrLaplacian}
K_s^h(m) = \frac1{|\Gamma(-s)|} \int_0^\infty G(m,\tfrac{t}{h^2})\,\frac{dt}{t^{1+s}}
=\frac{1}{h^{2s}|\Gamma(-s)|}\int_0^\infty G(m,r)\,\frac{dr}{r^{1+s}},
\end{equation}
for $m\neq 0$, and $K^h_s(0) =0$. The symmetry of this kernel in $m$ follows from the symmetry of $G(m,t)$.
Therefore it is enough to assume that $m\in \N$. To get formula~\eqref{eq:frKernelOned},
we use \eqref{eq:rusos} with $c=2$ and $\nu=m$. On the other hand,
it is easy to show that $e^{t\Delta_h}1\equiv1$
(see for example \cite{FiveGuys} for the case $h=1$). Hence, from \eqref{definition} and~\eqref{semigrupo},
\begin{align*}
(-\Delta_h)^su_{j}
&= \frac{1}{\Gamma(-s)} \int_0^\infty \sum_{m\neq j}
G\big(j-m,\tfrac{t}{h^2}\big)(u_{m}-u_{j}) \frac{dt}{t^{1+s}} \\
&= \frac{1}{\Gamma(-s)} \sum_{m\neq j} (u_{m}-u_{j})\int_0^\infty
G\big(j-m,\tfrac{t}{h^2}\big) \frac{dt}{t^{1+s}} \\
&= \sum_{m\neq j}(u_{j}-u_{m})K^h_{s}(j-m).
\end{align*}
For the interchange of summation and integration in the second equality, consider the terms
\[
\int_0^\infty \sum_{m\neq j}
G\big(j-m,\tfrac{t}{h^2}\big)|u_{m}| \frac{dt}{t^{1+s}}
+|u_{j}| \int_0^\infty \sum_{m\neq j}
G\big(j-m,\tfrac{t}{h^2}\big)\frac{dt}{t^{1+s}}.
\]
By using \eqref{eq:frKernelEst} we see that
the first term above is bounded
by $C_{s,h}\sum_{m\neq j}|m-j|^{-(1+2s)}|u_{m}|$, which is finite for each $j$
because $u\in \ell_{s}$. For the second term we use again~\eqref{eq:frKernelEst}.

\noindent$(b)$. The two sided estimate in \eqref{eq:frKernelEst} follows from
the explicit formula for the kernel \eqref{eq:frKernelOned} and the properties of the Gamma function
we prove in Subsection~\ref{subsec:technical-lemmas}, Lemma~\ref{eq:lowerBound}.

\noindent$(c)$. Observe that
\begin{equation}
\label{relacionhy1}
h^{2s}K^h_s(m)=K_s^1(m),\quad m\neq0.
\end{equation}
We have
\[
h^{2s}(-\Delta_h)^su_{j} =
u_{j}\sum_{m\neq j} K^1_s(j-m) -\sum_{m\neq j}K^1_s(j-m)u_{m} =: u_{j}T_1-T_2.
\]
We write $T_1=T_{1,1}+T_{1,2}$ (see \eqref{eq:kernelFrLaplacian}), where
\[
T_{1,2} = \frac{1}{|\Gamma(-s)|} \sum_{m\neq j} \int_1^\infty G(j-m,t)\,\frac{dt}{t^{1+s}}=
\frac{1}{|\Gamma(-s)|} \sum_{m\neq 0} \int_1^\infty G(m,t)\,\frac{dt}{t^{1+s}}.
\]
We are going to prove that $T_{1,1}$ and $T_2$ tend to zero, while $T_{1,2}$ tends to 1,
as $s\to0^+$. Let us begin with $T_{1,2}$. By adding and subtracting the term $m=0$ in the sum
and using~\eqref{eq:sumIk}, we get
\[
T_{1,2} = \frac{1}{|\Gamma(-s)|}\left(\frac{1}{s}-\int_1^\infty \frac{e^{-2t} {I_0(2t)}}{t^{1+s}}\,dt\right).
\]
By noticing that $|\Gamma(-s)|s=\Gamma(1-s)$ and that, by~\eqref{eq:asymptotics-infinite}, we have
\[
\frac{1}{|\Gamma(-s)|} \int_1^\infty \frac{e^{-2t} {I_0(2t)}}{t^{1+s}}\,dt
\le \frac{C}{|\Gamma(-s)|} \int_{1}^{\infty}t^{-1/2-1-s}\,dt
= \frac{C}{|\Gamma(-s)|(1/2+s)},
\]
we get $T_{1,2}\to 1$ as $s\to0^+$, as desired. Next we handle the other two terms $T_{1,1}$ and $T_2$.
On one hand, by~\eqref{eq:asymptotics-zero-ctes},
$$
T_{1,1}\sim \frac{1}{|\Gamma(-s)|} \sum_{m\neq 0}
\frac{1}{\Gamma(|m|+1)} \int_0^1e^{-2t}t^{|m|}
\,\frac{dt}{t^{1+s}}
\le \frac{1}{|\Gamma(-s)|}
\sum_{m\neq 0} \frac{1}{\Gamma(|m|+1)} \frac{1}{|m|-s},
$$
and the last quantity tends to $0$ as $s\to 0^+$. On the other hand,
for $T_2$, we use~\eqref{eq:frKernelOned} to obtain
\[
|T_2|\le \frac{C_{s}}{|\Gamma(-s)|} \sum_{m\neq j}
\frac{\Gamma(|j-m|-s)}{\Gamma(|j-m|+1+s)}|u_{m}|.
\]
The constant $C_{s}$ remains bounded as $s\to0^+$.
Since $u\in \ell_0$, by dominated convergence, the sum
above is bounded by $\|u\|_{\ell_0}$, for each $j\in \Z$, as $s\to0^+$.
Therefore $T_2\to0$ as $s\to 0^+$.

\noindent$(d)$. By using the symmetry of the kernel $K_s^h$ we can write
\[
h^{2s}(-\Delta_h)^s u_{j} = S_1+S_2,
\]
where (recall \eqref{relacionhy1})
\[
S_1 = K^1_s(1) \bigl(-u_{j+1}+2u_{j}-u_{j-1}\bigr),\quad\text{and}\quad
S_2 = \sum_{|m|>1}K^1_s(m)(u_j-u_{j-m}).
\]
Next we show that $K^1_s(1)\to1$, while $S_2\to0$, as $s\to1^-$, which would give the conclusion. By~\eqref{eq:frKernelOned} with $h=1$ we have
\[
\lim_{s\to 1^-}K^1_s(1) = \lim_{s\to 1^-} \frac{4^s\Gamma(1-s)\Gamma(1/2+s)}{\sqrt{\pi}|\Gamma(-s)|\Gamma(2+s)}
= \lim_{s\to 1^-}\frac{4^s\Gamma(1/2+s)}{\sqrt{\pi}\Gamma(2+s)}
= \frac{4\Gamma(3/2)}{\sqrt{\pi}\Gamma(3)}=1.
\]
On the other hand, by~\eqref{eq:frKernelOned} with $h=1$, $S_2$ is bounded by
\begin{align*}
2\|u\|_{\ell^\infty_h}\sum_{|m|>1}K^1_s(m)\le 2\|u\|_{\ell^\infty}
\frac{4^s\Gamma(\frac{1}{2}+s)}{\pi^{1/2}|\Gamma(-s)|}
\sum_{|m|>1}\frac{\Gamma(|m|-s)}{\Gamma(|m|+1+s)},
\end{align*}
which goes to zero as $s\to1^-$.\qed

\begin{rem}
In \cite[formula~(5)]{CLRV} the following equivalent expression for the kernel of $(-\Delta_1)^s$ is presented:
for $m\neq0$,
$$
K_s^1(m)=\frac{(-1)^{m+1}\Gamma(2s+1)}{\Gamma(1+s+m)\Gamma(1+s-m)}.
$$
Indeed, apply the duplication formula
and Euler's reflection formula for the Gamma function to~\eqref{eq:frKernelOned}.
\end{rem}

\section{Proof of Theorem~\ref{lem:kernelFractionalIntegral}}
\label{sec:dicreteFractional}

A function $f:\Z_h\to\R$ is in $\ell^p_h$, $1\leq p<\infty$ if
\begin{equation}
\label{Lph}
\|f\|_{\ell^p_h}=\bigg(h\sum_{j\in\Z}|f_j|^p\bigg)^{1/p}<\infty,
\end{equation}
while $f\in\ell^\infty_h$ if
\begin{equation}
\label{Linftyh}
\|f\|_{\ell^\infty_h}=\sup_{hj\in\Z_h}|f_j|<\infty.
\end{equation}
Obviously $\ell^p_h\subset\ell^q_h$ if $1\leq p\leq q\leq\infty$, 
with $\|f\|_{\ell^q_h}\leq h^{1/q-1/p}\|f\|_{\ell^p_h}$.
The discrete H\"older's inequality takes the form
\begin{equation}
\label{Holder}
\|fg\|_{\ell^1_h}\leq\|f\|_{\ell^p_h}\|g\|_{\ell^{p'}_h},
\quad\text{for } 1\leq p\leq\infty,\ \tfrac{1}{p}+\tfrac{1}{p'}=1.
\end{equation}
When $h=1$ we write $\ell^p=\ell^p_1=\ell^p(\Z)$.

\noindent$(a)$. Observe that if $f\in\ell_{-s}$ then the semigroup $e^{t\Delta_h}f_j$
is well defined, for each $hj\in\Z_h$. This follows from an analogous computation
to that of \eqref{eq:cuentita}.
By writing down the semidiscrete heat kernel into
\eqref{eq:fractionalintegralsemigrupo} and using Fubini's theorem
(which will be fully justified once we prove \eqref{eq:kernelFrIntegralOned}
and \eqref{eq:growthFractional0}) we see that \eqref{formulapuntualpotenciasnegativas}
follows with
$$
K^h_{-s}(m) = \frac{1}{\Gamma(s)} \int_0^{\infty}G(m,\tfrac{t}{h^2})\frac{dt}{t^{1-s}}
= \frac{1}{h^{-2s}\Gamma(s)}\int_0^{\infty}G(m,r)\frac{dr}{r^{1-s}}.
$$
To get \eqref{eq:kernelFrIntegralOned}, we just use the expression in~\eqref{eq:semidiscreteheatkernel}
and formula~\eqref{eq:rusos} in the integral above.

\noindent$(b)$. The estimates in~\eqref{eq:growthFractional0}
and~\eqref{eq:growthFractional} follow from \eqref{eq:kernelFrIntegralOned} and Lemma~\ref{eq:lowerBound}.

\noindent$(c)$. We recall that Stein and Wainger
showed in \cite[Proposition~(a)]{Stein-Wainger} that the operator
$$
I_\lambda g_j=\sum_{m\in\Z,m\neq 0}\frac{g_{j-m}}{|m|^\lambda},\quad j\in\Z,~0<\lambda<1,
$$
acting on functions $g:\Z\to\R$, is bounded from $\ell^r$ into $\ell^l$, whenever
$1/l\leq 1/r-1+\lambda$ and $1<r<l<\infty$.
By using Minkoswki's inequality,
the estimate for the kernel $K^h_{-s}(m)$ in \eqref{eq:growthFractional0},
the boundedness of the operator $I_\lambda$
above with $0<\lambda=1-2s<1$ and $r=p$, $l=q$ as in our statement,
the inclusion $\ell^p\subset\ell^q$ and \eqref{Lph}, we get
\begin{align*}
\|(-\Delta_h)^{-s}f\|_{\ell^q_h} &=\bigg(\sum_{j\in \Z}\bigg|\sum_{m\in \Z}K^h_{-s}(m)f_{j-m}\bigg|^q\bigg)^{1/q} \\
&=\bigg(\sum_{j\in\Z}\bigg|\sum_{m\in\Z,m\neq0}K^h_{-s}(m)f_{j-m}+K_{-s}^h(0)f_{j}\bigg|^q\bigg)^{1/q}\\
&\le\bigg(\sum_{j\in \Z}\bigg|\sum_{m\in \Z,m\neq0}K^h_{-s}(m)f_{j-m}\bigg|^q\Bigg)^{1/q}
+C_sh^{2s}\bigg(\sum_{j\in\Z}|f_j|^q\bigg)^{1/q}\\
&\le C_sh^{2s}\bigg(\sum_{j\in \Z}\bigg|\sum_{m\in \Z,m\neq 0}\frac{f_{j-m}}{|m|^{1-2s}}\bigg|^q\bigg)^{1/q}
+C_sh^{2s}\bigg(\sum_{j\in\Z}|f_j|^p\bigg)^{1/p}\\
&\le C_{p,q,s}h^{2s}\bigg(\sum_{j\in\Z}|f_j|^p\bigg)^{1/p}+C_sh^{2s-1/p}\|f\|_{\ell^p_h}
\le C_{p,q,s}h^{2s-1/p}\|f\|_{\ell^p_h}.
\end{align*}
Multiply both sides by $h^{1/q}$ and recall \eqref{Lph} to reach~\eqref{HLS}.
\qed

\section{Proof of Theorems~\ref{thm:Holder}~and~\ref{thm:consistencia1d}}
\label{Section:dosteoremas}

For the reader's convenience, we recall the definition of H\"older spaces on the real line.

\begin{defn}[Continuous H\"older spaces]
\label{def:contH}
Given $k\in \N_0$ and $0<\alpha\leq1$, we say that
a continuous function $U:\R\to\R$ belongs to the H\"older space $C^{k,\alpha}$ if $U\in C^k$ and
\[
[U]_{C^{k,\alpha}} \equiv[U^{(k)}]_{C^{0,\alpha}} := \sup_{\substack{x,y\in \R\\x\neq y}}
\frac{|U^{(k)}(x)-U^{(k)}(y)|}{|x-y|^{\alpha}}<\infty,
\]
where $U^{(k)}$ denotes the $k$-th derivative of~$U$. The norm in the spaces $C^{k,\alpha}$ is given by
\[
\|U\|_{C^{k,\alpha}} := \sum_{l=0}^k\|U^{(l)}\|_{L^\infty} +[U^{(k)}]_{C^{0,\alpha}}.
\]
\end{defn}

Next we define the discrete H\"older spaces on the mesh $\Z_h$.
For $u:\Z_h\to\R$ we consider the first order difference operators
\begin{equation}
\label{eq:definitiondiscretederivatives}
D_+u_{j}=\frac{1}{h}(u_{j+1}-u_{j}),\quad\text{and}\quad
D_-u_{j}=\frac{1}{h}(u_{j}-u_{j-1}).
\end{equation}
For $\gamma, \eta\in \N_0$,
let $D_{+,-}^{\gamma,\eta}u_j:=D_{+}^{\gamma}
D_{-}^{\eta}u_j$, where
$D_{\pm}^{k}u$ means that we apply $k$-times the operator $D_{\pm}$ to $u$,
with $D_{\pm}^0u=u$.

\begin{defn}[Discrete H\"older spaces]
\label{def:discreteH}
Let $k\in \N_0$ and $0<\alpha\le1$. A
function $u:\Z_h\to\R$ belongs to the discrete H\"older space $C_h^{k,\alpha}$ if
there is a constant $C>0$ such that
\[
[u]_{C^{k,\alpha}_h}\equiv[D_{+,-}^{\gamma,\eta}u]_{C_h^{0,\alpha}} :=
\sum_{\gamma,\eta:\gamma+\eta=k}\sup_{\substack{hm,hj\in\Z_h\\m\neq j}}\frac{|D_{+,-}^{\gamma,\eta}u_{j}-
D_{+,-}^{\gamma,\eta} u_{m}|}{|hj-hm|^{\alpha}}\leq C<\infty.
\]
\end{defn}

\begin{rem}
It is obvious that if $u:\Z_h\to\R$ is bounded then it belongs to $C^{0,\alpha}_h$, for any $0<\alpha\leq1$.
In this case a discrete H\"older norm can be given by
$\|u\|_{C_h^{0,\alpha}}:=\|u\|_{\ell^\infty_h}+[u]_{C_h^{0,\alpha}}$.
\end{rem}

\subsection{Proof of Theorem~\ref{thm:Holder}}~

\noindent$(i)$. It suffices to prove the case $k=0$,
since $D_{\pm}$ commutes with $(-\Delta_h)^s$.
Let $hk,hj\in\Z_h$. By recalling \eqref{relacionhy1}, we can write
\begin{equation}
\label{diferenciaS}
|(-\Delta_h)^su_{k}-(-\Delta_h)^su_{j}|= \frac{1}{h^{2s}}|S_1+S_2|,
\end{equation}
where
\begin{equation}
\label{S1}
S_1 := \sum_{1\leq|m|\le|k-j|}\big(u_{k}-u_{k+m}-u_{j}+u_{j+m}\big)K^1_s(m),
\end{equation}
and $S_2$ is the rest of the sum over $|m|>|k-j|$.
By the kernel estimate~\eqref{eq:frKernelEst},
\[
S_1 \le C_{s}2[u]_{C_h^{0,\alpha}}h^{\alpha} \sum_{1\leq|m|\le|k-j|}
\frac{|m|^{\alpha}}{|m|^{1+2s}} \le C_{s}
[u]_{C_h^{0,\alpha}}h^{\alpha}|k-j|^{\alpha-2s}.
\]
For $S_2$ we use that $|u_{k}-u_{j}|\le [u]_{C_h^{0,\alpha}}h^{\alpha}|k-j|^{\alpha}$ and
\eqref{eq:frKernelEst} again to get
\[
S_2\le C_{s}[u]_{C_h^{\alpha}}h^{\alpha}|k-j|^{\alpha} \sum_{|m|>|k-j|}|m|^{-1-2s}\le C_{s}
[u]_{C_h^{0,\alpha}}h^{\alpha}|k-j|^{\alpha-2s}.
\]
We conclude by pasting together both estimates into \eqref{diferenciaS}.

\noindent$(ii)$. As in $(i)$, it is enough to consider just the case $k=0$.
We are going to use~\eqref{diferenciaS}.
Without loss of generality, let $m\in \N$. We split the sum in \eqref{diferenciaS}--\eqref{S1}
by taking the terms $u_{k}-u_{k+m}$ and $u_j-u_{j+m}$ separately.
The following computation works for both terms, so we do it only for the first one. It is verified that
\begin{equation}
\label{eq:discreteTVM}
u_{k+m}-u_{k}=h\sum_{\gamma=0}^{m-1}D_+u_{k+\gamma}.
\end{equation}
Therefore,
\begin{equation}
\label{vaso}
u_{k}-u_{k+m}=\Big(hmD_{+}u_{k}-
h\sum_{\gamma=0}^{m-1}D_+u_{k+\gamma}\Big)
-hmD_{+}u_{k}.
\end{equation}
On one hand, by taking into account that the kernel $K^1_s(m)$ is even, we get
\begin{equation}
\label{eq:nulo}
\sum_{1\leq|m|\le|k-j|}(hm D_{+}u_{k})K^1_s(m)
=hD_{+}u_{k}\sum_{1\leq|m|\le|k-j|}mK^1_s(m)=0.
\end{equation}
On the other hand, since $u\in C_h^{1,\alpha}$,
the first term in the right hand side of \eqref{vaso} can be bounded by
\begin{equation}
\label{eq:truco}
\begin{aligned}
h\sum_{\gamma=0}^{m-1}\big|D_{+}u_{k}-D_{+}u_{k+\gamma}\big| &\le h^{1+\alpha}[u]_{C^{1,\alpha}_h}
\sum_{\gamma=0}^{m-1}|\gamma|^{\alpha} \\
&\le h^{1+\alpha}[u]_{C^{1,\alpha}_h}|m|^{\alpha}|m|
=[u]_{C^{1,\alpha}_h}(h|m|)^{1+\alpha}.
\end{aligned}
\end{equation}
Using \eqref{eq:nulo} and~\eqref{eq:truco} (and their analogous for $u_{j}-u_{j+m}$)
in~\eqref{S1}, we conclude that
\[
|S_1|\le C_{s}[u]_{C^{1,\alpha}_h}h^{1+\alpha}\sum_{1\leq|m|\le|k-j|}
\frac{|m|^{1+\alpha}}{|m|^{1+2s}}
\le C_{s}[u]_{C^{1,\alpha}_h}(h|k-j|)^{1+\alpha-2s}.
\]
Now we deal with $S_2$. By~\eqref{eq:discreteTVM},
\begin{align*}
\big|(u_{k}-u_{j})-(u_{k+m}-u_{j+m})\big|&=\big|(u_{(k-j)+j}-u_{j})-(u_{(k-j)+(j+m)}-u_{j+m})\big|\\
&\le h\sum_{\gamma=0}^{k-j-1}|D_{+}u_{j+\gamma}-D_{+}u_{j+m+\gamma}|\le [u]_{C^{1,\alpha}_h}h^{1+\alpha}|m
|^{\alpha}|k-j|.
\end{align*}
Hence,
\[
|S_2| \le C[u]_{C^{1,\alpha}_h}h^{1+\alpha}|k-j| \sum_{|m|>|k-j|}|m|^{\alpha}
K^1_s(m)\le C[u]_{C^{1,\alpha}_h}h^{1+\alpha}|k-j|^{1+\alpha-2s}.
\]
\qed

\subsection{Proof of Theorem~\ref{thm:consistencia1d}}
\label{sec:ProofConsistency}

We need a preliminary lemma.

\begin{lem}
\label{lem:compa}
Let $0<s<1$ and let $A_s>0$ be as in \eqref{eq:constante1d}. Given $j\in \Z$, we have
\begin{equation}
\label{eq:lem1}
\bigg|A_s\int_{|y-h(j+m)|<h/2}\frac{dy}{|hj-y|^{1+2s}}-K^h_s(m)\bigg|\le \frac{C_s}{h^{2s}|m|^{2+2s}}, \quad \text{for all }\, m\in \Z\setminus\{0\},
\end{equation}
\begin{equation}
\label{eq:lem2}
\int_{|y-h(j+m)|<h/2}\frac{dy}{|hj-y|^{1+2s}}\le \frac{C_s}{h^{2s}|m|^{1+2s}},\quad \text{for all }\, m\in \Z\setminus\{0\},
\end{equation}
and
\begin{equation}
\label{eq:lem3}
\sum_{m\in \Z} \int_{|y-h(j+m)|<h/2} \frac{hj-y}{|hj-y|^{1+2s}}\,dy = 0.
\end{equation}
\end{lem}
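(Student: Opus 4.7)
The plan is to treat the three estimates separately, reducing each to a clean scalar inequality by the change of variables $y = hj + hr$ that removes the mesh parameter. The main step is part (1); the other two follow by fairly direct calculations.

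\emph{Part (3).} Setting $z = y-hj$, the integrand becomes $-z/|z|^{1+2s}$, which is odd in $z$. For $m=0$ the region $|z|<h/2$ is symmetric about the origin, so the integral vanishes. For $m\neq 0$ I would pair the $m$-th and $(-m)$-th terms in the sum; the substitution $z\mapsto -z$ sends the $(-m)$-th integral to the negative of the $m$-th integral, and so the two cancel, giving the claimed identity.

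\emph{Part (2).} The same substitution gives $h^{-2s}\int_{|r-m|<1/2}|r|^{-1-2s}\,dr$. For $|m|\geq 1$ one has $|r|\geq |m|-1/2\geq |m|/2$ on the interval of integration, so the integrand is bounded by $2^{1+2s}|m|^{-1-2s}$ and the unit-length interval gives the required bound with $C_s = 2^{1+2s}$.

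\emph{Part (1).} After the substitution and using formula \eqref{eq:frKernelOned} to write $K_s^h(m) = (A_s/h^{2s})\,\Gamma(|m|-s)/\Gamma(|m|+1+s)$, the claim reduces (by symmetry, WLOG $m\geq 1$) to
\[
\left|\int_{m-1/2}^{m+1/2}\frac{dr}{r^{1+2s}} - \frac{\Gamma(m-s)}{\Gamma(m+1+s)}\right|\leq \frac{C_s}{m^{2+2s}}.
\]
On the integral side, evaluating the antiderivative and Taylor-expanding $(m\pm 1/2)^{-2s}$ around $m$ shows that the odd-order corrections cancel pairwise, yielding $\int_{m-1/2}^{m+1/2}r^{-1-2s}\,dr = m^{-1-2s} + O(m^{-3-2s})$. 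On the Gamma side, the general asymptotic $\Gamma(m+a)/\Gamma(m+b)\sim m^{a-b}(1 + (a-b)(a+b-1)/(2m) + O(m^{-2}))$ combined with the fortunate identity $a+b-1 = -s + (1+s) - 1 = 0$ gives $\Gamma(m-s)/\Gamma(m+1+s) = m^{-1-2s} + O(m^{-3-2s})$; the precise remainder is supplied by Lemma~\ref{eq:lowerBound}, the same technical lemma used to derive \eqref{eq:frKernelEst}. Subtracting yields the required $O(m^{-2-2s})$ bound for $m\geq 2$, and the case $m=1$ is handled trivially by noting that both sides are bounded above by constants depending only on $s$.

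\emph{Main obstacle.} The delicate point is (1): both the integral and the Gamma ratio have \emph{identical} leading coefficients at order $m^{-1-2s}$, so the claimed decay emerges only after a non-trivial cancellation. This cancellation rests on two structural symmetries, namely the evenness of the Taylor expansion of $r^{-1-2s}$ around $m$ on the integral side, and the vanishing of the $(a+b-1)$ factor in the Gamma-ratio asymptotic. Without these, the difference would only decay like $m^{-1-2s}$, which is far too weak.
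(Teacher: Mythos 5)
Your proof is correct and follows essentially the same route as the paper: rescale to $h=1$, insert the common term $|m|^{-1-2s}$ by the triangle inequality, and bound the two resulting differences separately, invoking Lemma~\ref{eq:lowerBound} for the Gamma-ratio side. Two small remarks on the ``Main obstacle'' paragraph, though. First, the two ``structural symmetries'' you invoke give an $O(m^{-3-2s})$ error on each side, which is stronger than what the lemma asks for; the paper instead uses the plain mean value theorem on $|z|^{-1-2s}$ over $|z-m|<1/2$ to get $O(m^{-2-2s})$ directly, with no cancellation of odd Taylor terms required, so the symmetry observation is elegant but not load-bearing. Second, Lemma~\ref{eq:lowerBound} delivers only the $O(m^{-2-2s})$ remainder, not the $O(m^{-3-2s})$ one you state for the Gamma ratio --- if you want the sharper rate on that side you would have to extract it from \eqref{eq:tricomi} yourself. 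Neither point affects the validity of the argument, since $O(m^{-2-2s})$ is exactly what \eqref{eq:lem1} requires.
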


\begin{proof}
Let $m\in \Z\setminus\{0\}$.
The change of variable $hj-y=hz$ and \eqref{relacionhy1} produce
\begin{multline*}
\bigg|\frac{A_s}{h^{2s}} \int_{|z-m|<1/2} \frac{dz}{|z|^{1+2s}}-K^h_s(m)\bigg| \\
\le \bigg|\frac{A_s}{h^{2s}} \int_{|z-m|<1/2} \bigg(\frac{1}{|z|^{1+2s}}-\frac{1}{|m|^{1+2s}}\bigg)\,dz\bigg|
+ h^{-2s}\bigg|\frac{A_s}{|m|^{1+2s}}-K^1_s(m)\bigg|.
\end{multline*}
By using the mean value theorem,
\[
\bigg|\int_{|z-m|<1/2} \bigg(\frac{1}{|z|^{1+2s}}-\frac{1}{|m|^{1+2s}}\bigg)\,dz\bigg|
\le C_s\bigg|\int_{|z-m|<1/2}\frac{dz}{|m|^{2+2s}}\bigg| = \frac{C_s}{|m|^{2+2s}},
\]
while by Lemma~\ref{eq:lowerBound},
\[
\bigg|\frac{A_s}{|m|^{1+2s}}-K^1_s(m)\bigg|\le \frac{C_s}{|m|^{2+2s}}.
\]
Thus \eqref{eq:lem1} follows. For \eqref{eq:lem2}, it is easy to see that
\[
\int_{|y-(h(j+m))|<h/2}\frac{dy}{|hj-y|^{1+2s}}
\le C_s\int_{|y-(h(j+m))|<h/2}\frac{dy}{|hm|^{1+2s}}
= \frac{C_s}{h^{2s}|m|^{1+2s}}.
\]
Finally, let us prove~\eqref{eq:lem3}. By symmetry, we have
\[
\int_{|y-hj|<h/2}\frac{(hj-y)}{|hj-y|^{1+2s}}\,dy = 0.
\]
Moreover, by changing variables $hj-y=z$, we get
\begin{align*}
\sum_{\substack{m\in \Z\\ m\neq 0}}
\int_{|z-hm|< h/2} \frac{z}{|z|^{1+2s}}\,dz
&= \sum_{\substack{\ell\in \Z\\ \ell\neq 0}}
\int_{|z+h\ell|< h/2} \frac{z}{|z|^{1+2s}}\,dz
= \sum_{\substack{\ell\in \Z\\ \ell\neq 0}}
\int_{|r-h\ell|< h/2} \frac{-r}{|r|^{1+2s}}\,dr,
\end{align*}
and the conclusion readily follows.
\end{proof}

Now we present the proof of Theorem~\ref{thm:consistencia1d}.

\noindent$(i)$. We write, for each $j\in \Z$,
\begin{align*}
\big(r_h\big((-\Delta)^sU\big)\big)_j
&= A_{s}\sum_{m\in \Z}\int_{|y-h(j+m)|< h/2} \frac{U(hj)-U(y)}{|hj-y|^{1+2s}}\,dy\\
&= A_{s}\bigg[\int_{|y-hj|< h/2} \frac{U(hj)-U(y)}{|hj-y|^{1+2s}}\,dy\\
&\qquad\quad+\sum_{\substack{m\in \Z\\ m\neq 0}}\int_{|y-h(j+m)|< h/2} \frac{U(h(j+m))-U(y)}{|hj-y|^{1+2s}}\,dy\\
&\qquad\quad + \sum_{\substack{m\in \Z\\ m\neq 0}} \big(U(hj)-U(h(j+m))\big)
\int_{|y-h(j+m)|< h/2} \frac{dy}{|hj-y|^{1+2s}}\bigg]\\
& =: A_s(S_0+S_1+S_2).
\end{align*}
We readily notice that
\[
|S_0|\le [U]_{C^{0,\alpha}} \int_{|hj-y|\le h/2}|hj-y|^{\alpha-2s-1}\,dy
\le C_s[U]_{C^{0,\alpha}}h^{\alpha-2s}.
\]
By using that $U\in C^{0,\alpha}$ and~\eqref{eq:lem2}, we have
\begin{align*}
|S_1| &\le C[U]_{C^{0,\alpha}} \sum_{\substack{m\in \Z\\ m\neq 0}}
\int_{|y-h(j+m)|< h/2} \frac{h^{\alpha}\,dy}{|hj-y|^{1+2s}} \\
&\le C_s[U]_{C^{0,\alpha}}h^{\alpha}\sum_{\substack{m\in \Z\\ m\neq 0}} \frac{1}{h^{2s}|m|^{1+2s}}
= C_s[U]_{C^{0,\alpha}}h^{\alpha-2s}.
\end{align*}
Now we compare $A_sS_2$ with $(-\Delta_h)^s(r_hU)_j$.
Since $U\in C^{0,\alpha}$, by Lemma~\ref{lem:compa} we can see that
\begin{align*}
\bigg|A_s\sum_{\substack{m\in \Z\\ m\neq 0}}
&\big(U(hj)-U(h(j+m))\big)\int_{|y-h(j+m)|< h/2}
\frac{dy}{|hj-y|^{1+2s}}-(-\Delta_h)^s(r_hU)_j\bigg|\\
&\le \sum_{\substack{m\in \Z\\m\neq 0}}
\big|U(hj)-U(h(j+m))\big| \bigg|A_s\int_{|y-h(j+m)|< h/2} \frac{dy}{|hj-y|^{1+2s}}-K^h_s(m) \bigg| \\
&\le C_s[U]_{C^{0,\alpha}}\sum_{\substack{m\in \Z\\m\neq 0}} \frac{|hm|^{\alpha}}{h^{2s}|m|^{2+2s}}
\le C_s[U]_{C^{0,\alpha}}h^{\alpha-2s}.
\end{align*}

\noindent$(ii)$. Observe that
$\frac{d}{dx}$ and $D_+$ commute with $(-\Delta)^s$ and $(-\Delta_h)^s$, respectively. Then
\begin{multline*}
\big\|D_+(-\Delta_h)^s (r_h U)- r_h\big(\tfrac{d}{dx}(-\Delta)^s U\big) \big\|_{\ell^\infty} \\
\le  \big\|(-\Delta_h)^s D_+(r_h U) - (-\Delta_h)^s \big(r_h \tfrac{d}{dx}U\big) \big\|_{\ell^\infty}
+ \big\| (-\Delta_h)^s \big(r_h \tfrac{d}{dx}U\big) - r_h\big(\tfrac{d}{dx}(-\Delta)^s U\big) \big\|_{\ell^\infty}.
\end{multline*}
For the second term we just apply $(i)$. As for the first one, by using the mean value theorem,
\begin{align*}
\big| (-\Delta_h)^s D_+&(r_h U)_j- (-\Delta_h)^s  \big(r_h \tfrac{d}{dx}U \big)_j \big| \\
&= \bigg|\sum_{\substack{m\in \Z\\m\neq 0}} K^h_s(m) \bigg[ \bigg( \frac{U(h(j+1))- U(hj)}{h} - U'(hj) \bigg) \\
&\qquad\qquad-\bigg(\frac{U(h(j+m+1))- U(h(j+m)))}{h}- U'(h(j+m))\bigg)\bigg]\bigg| \\
&= \bigg|\sum_{\substack{m\in \Z\\m\neq 0}} K^h_s(m) \bigg[\big(U'(\xi_j)-U'(hj)\big) -
\big( U'(\xi_{j+m})- U'(h(j+m)) \big)\bigg]\bigg| \\
&\le C[U]_{C^{1,\alpha}}\sum_{\substack{m\in \Z\\m\neq 0}} K^h_s(m)  h^{\alpha} \le C[U]_{C^{1,\alpha}}h^{\alpha-2s},
\end{align*}
where $\xi_j$ is an intermediate point between $hj$ and $h(j+1)$, and analogously~$\xi_{j+m}$.

\noindent$(iii)$. By taking into account~\eqref{eq:lem3}, we can write
\begin{align*}
& r_h\big((-\Delta)^sU\big)_j
= A_{s}\sum_{m\in \Z}\int_{|y-h(j+m)|< h/2} \frac{U(hj)-U(y)-U'(hj)(hj-y)}{|hj-y|^{1+2s}}\,dy \\
& \quad= A_{s}\bigg[\int_{|y-hj|< h/2} \frac{U(hj)-U(y)-U'(hj)(hj-y)}{|hj-y|^{1+2s}}\,dy \\
&\qquad\quad+\sum_{\substack{m\in \Z\\ m\neq 0}}
\int_{|y-h(j+m)|< h/2} \frac{U(h(j+m))-U(y)-U'(hj)(h(j+m)-y)}{|hj-y|^{1+2s}}\,dy \\
& \qquad\quad+ \sum_{\substack{m\in \Z\\ m\neq 0}}
\big(U(hj)-U(h(j+m))-U'(hj)(hj-h(j+m))\big)
\int_{|y-h(j+m)|< h/2} \frac{dy}{|hj-y|^{1+2s}}\bigg] \\
& \quad=: A_s(T_0+T_1+T_2).
\end{align*}
For $T_0$, we use the mean value theorem and the hypothesis on~$U$.
Indeed, if $|y-hj|< h/2$ and $\xi_j(y)$ is an intermediate point between $hj$ and $y$, we have
\begin{align*}
\frac{\big(U'(\xi_j(y))-U'(hj)\big)(hj-y)}{|hj-y|^{1+2s}}
&\le [U]_{C^{1,\alpha}}\frac{|\xi_j(y)-hj|^{\alpha}|hj-y|}{|hj-y|^{1+2s}}\\
&\le [U]_{C^{1,\alpha}}|hj-y|^{\alpha-2s}.
\end{align*}
Then, as a consequence,
$$
|T_0|\le [U]_{C^{1,\alpha}}\int_{|y-hj|< h/2} |hj-y|^{\alpha-2s}\,dy
\le  C_s[U]_{C^{1,\alpha}}h^{\alpha-2s+1},
$$
whenever $2s<1+\alpha$.
By the hypotheses and~\eqref{eq:lem2},
\[
|T_1|\le C_s[U]_{C^{1,\alpha}} \sum_{\substack{m\in \Z\\ m\neq 0}}
\frac{|hm|^{\alpha}h}{h^{2s}|m|^{1+2s}} = C_s[U]_{C^{1,\alpha}}h^{1+\alpha-2s}.
\]
We compare $A_sT_2$ with $(-\Delta_h)^s(r_hU)_j$. Since $K^h_s(m)$ is even in $m$, we can write
\[
(-\Delta_h)^s(r_hU)_j=\sum_{\substack{m\in \Z\\ m\neq 0}} \big(U(hj)-U(h(j+m))-U'(hj)(hj-h(j+m))\big)K^h_s(m).
\]
Then \eqref{eq:lem1} and the regularity of $U$ give the result.

\noindent$(iv)$. The proof in this case follows as in $(ii)$ by iteration $l$ times.
\qed

\section{Proof of Theorem \ref{RegularityfractionalIntegral}}
\label{Section:JoseLuis}

We shall need two lemmas.

\begin{lem}
\label{derivative}
Let $0<s<1/2$ and  $\displaystyle H_s(r):=\int_0^\infty e^{-(r+s)v} (1-e^{-v})^{-2s} dv$, for $r >0$.
For any $k\geq0$ there exists a constant $C_{k,s} >0$ such that
$$
\Big|\frac{d^k}{dr^k}H_s(r)\Big|\le\frac{C_{k,s}}{(r+s)^{k+1-2s}},\quad\text{for all } r>0.
$$
\end{lem}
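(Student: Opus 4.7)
The plan is to differentiate under the integral sign and estimate the resulting integrand directly, splitting the domain into a region near $v=0$ (where the singular factor $(1-e^{-v})^{-2s}$ is active) and a region away from zero (where it is bounded). Concretely, for each $k\geq 0$ formally
$$
\frac{d^{k}}{dr^{k}}H_s(r)=\int_0^\infty (-v)^{k} e^{-(r+s)v}(1-e^{-v})^{-2s}\,dv,
$$
and the differentiation is justified by dominated convergence: for $r$ in compact subsets of $(0,\infty)$ the integrand is bounded by $v^{k} e^{-(r_0+s)v/2}(1-e^{-v})^{-2s}$ for some $r_0>0$, which behaves like $v^{k-2s}$ near $v=0$ (integrable since $2s<1$) and decays exponentially at infinity. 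It therefore suffices to bound $\int_0^\infty v^{k} e^{-(r+s)v}(1-e^{-v})^{-2s}\,dv$.

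Next I would split this integral as $\int_0^1+\int_1^\infty$. On $[0,1]$, the elementary estimate $1-e^{-v}\geq (1-e^{-1})v$ (which follows from the fact that $(1-e^{-v})/v$ is nonincreasing on $(0,1]$) gives $(1-e^{-v})^{-2s}\leq c_s\, v^{-2s}$, and hence
$$
\int_0^1 v^{k}e^{-(r+s)v}(1-e^{-v})^{-2s}\,dv\leq c_s\int_0^\infty v^{k-2s}e^{-(r+s)v}\,dv=\frac{c_s\,\Gamma(k+1-2s)}{(r+s)^{k+1-2s}}.
$$
On $[1,\infty)$, we use $(1-e^{-v})^{-2s}\leq (1-e^{-1})^{-2s}$ to obtain
$$
\int_1^\infty v^{k}e^{-(r+s)v}(1-e^{-v})^{-2s}\,dv\leq C_s\int_0^\infty v^{k}e^{-(r+s)v}\,dv=\frac{C_s\,\Gamma(k+1)}{(r+s)^{k+1}}.
$$

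Finally, to combine the two bounds into a single factor of $(r+s)^{-(k+1-2s)}$, I would use the trivial but crucial inequality $r+s\geq s$, which gives $(r+s)^{2s}\geq s^{2s}$ and therefore
$$
\frac{1}{(r+s)^{k+1}}\leq \frac{s^{-2s}}{(r+s)^{k+1-2s}}.
$$
Adding the two estimates yields the claim with a constant $C_{k,s}$ depending only on $k$ and $s$. The only mild subtlety is precisely this last step: the bound near $v=0$ already has the right power of $r+s$, but the bound away from zero is stronger than needed for large $r$ and needs to be weakened for small $r$; this is absorbed by noting that $r+s$ is bounded below by the fixed positive constant $s$, which is the reason the estimate is stated in terms of $r+s$ rather than $r$ alone.
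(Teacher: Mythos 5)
Your proposal is correct and is essentially the paper's own argument: differentiate under the integral sign, split at $v=1$, use $1-e^{-v}\gtrsim v$ near $0$ to reduce to a Gamma integral, bound $(1-e^{-v})^{-2s}$ by a constant away from $0$, and absorb the mismatch in the powers of $r+s$ via $r+s\geq s$. The only (cosmetic) difference is that you make the constant $(1-e^{-1})^{-2s}$ explicit in the tail piece and spell out the dominated-convergence justification, both of which the paper treats more tersely.
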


\begin{proof}
We have
\begin{align*}
\Big|\frac{d^k}{dr^k}H_s(r)\Big|&=\Big|(-1)^k\int_0^{\infty}e^{-(r+s)v}(1-e^{-v})^{-2s}v^k\,dv\Big|
\\&=\int_0^1+\int_1^{\infty} e^{-(r+s)v}(1-e^{-v})^{-2s}v^k\,dv=:I_1+I_2.
\end{align*}
On one hand, since $1-e^{-v}\ge(1-e^{-1})v=Cv$ for $v\in (0,1)$,
\begin{align*}
I_1\le C_s\int_0^1e^{-(r+s)v}v^{k-2s}\,dv&=C_s\int_0^{r+s}e^{-t}\frac{t^{k-2s}}{(r+s)^{k+1-2s}}\,dt\\
&\le \frac{C_s}{(r+s)^{k+1-2s}}\int_0^{\infty}e^{-t}t^{k-2s}\,dt\\
&=C_s\frac{\Gamma(k+1-2s)}{(r+s)^{k+1-2s}}.
\end{align*}
On the other hand,
$$
I_2\le \int_1^{\infty}e^{-(r+s)v}v^k\,dv = (r+s)^{-(k+1)}
\int_{r+s}^{\infty}e^{-t}t^k\,dt \le \frac{\Gamma(k+1)}{(r+s)^{k+1}}.
$$
By collecting both estimates, we conclude that
\begin{equation*}
\Big|\frac{d^k}{dr^k}H_s(r)\Big|
\le C_s\frac{\Gamma(k+1-2s)}{(r+s)^{k+1-2s}}+\frac{\Gamma(k+1)}{(r+s)^{k+1}}\\
\le \frac{C_{k,s}}{(r+s)^{k+1-2s}},
\end{equation*}
because $(r+s)^{-(k+1)}\le C_s (r+s)^{-(k+1-2s)}$.
\end{proof}
Recall the identity for the quotient of Gamma functions in \cite[Section 7 (15)]{Tricomi-Erdelyi}:
\begin{equation}
\label{eq:tricomi}
\frac{\Gamma(z+\alpha)}{\Gamma(z+\beta)}=\frac{1}{\Gamma(\beta-\alpha)}\int_0^{\infty}e^{-(z+\alpha)v}(1-e^{-v})^{\beta-\alpha-1}\,dv,
\end{equation}
valid for $\Re(\beta-\alpha)>0$, $\Re(z+\alpha)>0$.
It follows from \eqref{eq:kernelFrIntegralOned} and \eqref{eq:tricomi} with $z=|m|$,
$\alpha=s\in(0,1/2)$ and $\beta=1-s$, that
\begin{equation}
\label{conexion}
K^h_{-s}(m)=C_sh^{2s}H_s(|m|),\quad m\in\Z,
\end{equation}
for some constant $C_s>0$, where $H_s$ is the function we defined in Lemma~\ref{derivative}.

\begin{lem}
\label{cero}
Let $0<s<1/2$ and $j,k\in \Z$. Then
$$
\sum_{m\in\Z}\big(K^h_{-s}(m-j)-K^h_{-s}(m-k)\big)=0.
$$
\end{lem}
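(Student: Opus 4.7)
The delicate point is that neither sum $\sum_m K^h_{-s}(m-j)$ nor $\sum_m K^h_{-s}(m-k)$ converges on its own: by \eqref{eq:growthFractional0}, $K^h_{-s}(n)$ decays only like $|n|^{2s-1}$, which fails to be summable since $2s-1>-1$. The lemma must therefore be read as saying that the difference of two divergent tails cancels, and the hypothesis $s<1/2$ is precisely what enables this cancellation. My plan has two steps: first verify absolute convergence of the difference series, then evaluate it via symmetric partial sums.

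For the first step, I would use the sharper estimate \eqref{eq:growthFractional}. For $|m|$ large compared with $|j|+|k|$ one writes
\[
K^h_{-s}(m-j)-K^h_{-s}(m-k) = c_s h^{2s}\bigl(|m-j|^{2s-1}-|m-k|^{2s-1}\bigr) + O\bigl(|m|^{2s-2}\bigr),
\]
and a mean value estimate turns the leading bracket into $O(|j-k|\,|m|^{2s-2})$. Since $s<1/2$ we have $2s-2<-1$, so the series is absolutely summable and its value is independent of the order of summation.

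For the second step, I would compute the sum along the symmetric partial sums $S_N(p):=\sum_{|m|\le N}K^h_{-s}(m-p)$. The change of index $n=m-p$ gives
\[
S_N(j)=\sum_{n=-N-j}^{N-j}K^h_{-s}(n),\qquad S_N(k)=\sum_{n=-N-k}^{N-k}K^h_{-s}(n).
\]
Assuming without loss of generality $j>k$ and $N>|j|+|k|$, the two index intervals share $[-N-k,N-j]$, so after cancellation, and using the symmetry $K^h_{-s}(-n)=K^h_{-s}(n)$, we are left with
\[
S_N(j)-S_N(k)=\sum_{n=N+k+1}^{N+j}K^h_{-s}(n)-\sum_{n=N-j+1}^{N-k}K^h_{-s}(n),
\]
two sums of exactly $j-k$ terms each, all lying at distance at least $N-|j|-|k|$ from the origin. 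Applying the upper bound in \eqref{eq:growthFractional0},
\[
|S_N(j)-S_N(k)|\le 2(j-k)\,\frac{D_s h^{2s}}{(N-|j|-|k|)^{1-2s}}\xrightarrow[N\to\infty]{}0,
\]
since $1-2s>0$. Combining with the absolute convergence from the first step gives $\sum_{m\in\Z}(K^h_{-s}(m-j)-K^h_{-s}(m-k))=0$.

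The only obstacle worth naming is the index bookkeeping needed to identify which $2(j-k)$ boundary terms survive the cancellation; once this is clear, the decay $|n|^{2s-1}$ with $s<1/2$ finishes the job, and no tool beyond the two kernel estimates of Theorem~\ref{lem:kernelFractionalIntegral}$(b)$ is required.
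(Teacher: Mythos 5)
Your proof is correct, and it takes a genuinely different route from the paper's. The paper first reduces to $h=1$, $k=0$ and $j$ a positive integer (via \eqref{verde} and translation), obtains absolute convergence from Lemma~\ref{derivative}, and then performs an \emph{exact} cancellation: by the symmetry $K^1_{-s}(-n)=K^1_{-s}(n)$, the sum over $m<0$ is reflected onto $m>0$, where it cancels the sum over $m>j$ term by term, while the finite block $0\le m\le j$ telescopes to zero after pairing symmetric terms. You instead establish absolute convergence from the refined kernel estimate~\eqref{eq:growthFractional} together with the mean value theorem, and then evaluate the sum via symmetric partial sums: after the index shift, all but $2(j-k)$ boundary terms cancel, and those survivors lie at distance at least $N-|j|-|k|$ from the origin, so the decay $K^h_{-s}(n)=O(h^{2s}|n|^{2s-1})$ with $2s-1<0$ forces them to vanish in the limit $N\to\infty$. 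The paper's version is algebraically tidier (no limiting argument beyond absolute convergence is needed once the telescoping is set up) but relies on the preliminary reduction to $k=0$; your version works directly with arbitrary $j,k$ and makes the quantitative role of the hypothesis $s<1/2$ in killing the boundary terms more explicit. Both arguments are complete and rest on the same two inputs, namely the symmetry of $K^h_{-s}$ and its $|n|^{2s-1}$ decay.
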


\begin{proof}
Clearly it is enough to prove that for every positive integer $j$ we have
$$
\sum_{m\in\Z}\big(K^1_{-s}(m-j)-K^1_{-s}(m)\big)=0,
$$
where
\begin{equation}
\label{verde}
K_{-s}^1(m)=h^{-2s}K_s^h(m),\quad m\in\Z.
\end{equation}
Observe that by Lemma \ref{derivative} the series above is absolutely convergent.
On the other hand, by the symmetry of the kernel $K^1_{-s}$ we have
$$
\sum_{m<0}\big(K^1_{-s}(m-j)-K^1_{-s}(m)\big)=\sum_{m>0}\big(K^1_{-s}(m+j)-K^1_{-s}(m)\big),
$$
and
$$
\sum_{m>j}\big(K^1_{-s}(m-j)-K^1_{-s}(m)\big)=\sum_{m>0}\big(K^1_{-s}(m)-K^1_{-s}(m+j)\big).
$$
Finally,
\begin{align*}
\sum_{0\le m\le j}\big(K^1_{-s}(m-j)-K^1_{-s}(m)\big) &=K^1_{-s}(-j)-K^1_{-s}(0)+
K^1_{-s}(1-j)-K^1_{-s}(1) \\
&\quad+\cdots+K^1_{-s}(-1)-K^1_{-s}(j-1)+K^1_{-s}(0)-K^1_{-s}(j)\\
&=0.
\end{align*}
Pasting together these computations we get the claim.
\end{proof}

We are ready to begin with the proof of Theorem \ref{RegularityfractionalIntegral}.
We shall prove the result only for the case $h=1$.
The general case $h>0$ follows by using the relation \eqref{verde}.

\noindent $(i).$ As the discrete derivatives commute with $(-\Delta_1)^{-s}$ for $0<s<1/2$,
it suffices to prove the case $k=0$.
Moreover, it is enough to show that for any positive $j$ we have
$$
|(-\Delta_1)^{-s}f_j-(-\Delta_1)^{-s}f_0|\le C[f]_{C_1^{0,\alpha}}j^{\alpha+2s}.
$$
By using Lemma \ref{cero} we can write
\begin{align*}
|(-\Delta_1)^{-s} f_j-  (-\Delta_1)^{-s} f_0|&=
\bigg|\sum_{m\in\Z}\big(K^1_{-s}(m-j)-K^1_{-s}(m)\big)(f_m-f_0)\bigg| \\
&\le \Big(\sum_{0<|m|\le 2j}+ \sum_{|m|>2j}\Big)|K^1_{-s}(m-j)-K^1_{-s}(m)||f_m-f_0| \\
&=:  S_1+S_2.
\end{align*}
By using the estimate in \eqref{eq:growthFractional0} we get
\begin{align*}
S_1 & \le  C[ f]_{C_1^{0,\alpha}}\bigg(K^1_{-s}(0) j^\alpha+
\sum_{0<|m|\le 2j,m\neq j}  \frac{|m|^\alpha}{|m-j|^{1-2s}} +
\sum_{0<|m|\le2j}  \frac{|m|^\alpha}{|m|^{1-2s}}\bigg) \\
&\le C[f]_{C^{0,\alpha}_1}j^\alpha\bigg(1+\sum_{0<|m-j|\le3j}\frac1{|m-j|^{1-2s}} +
 \sum_{0<|m|\le 2j}  \frac1{|m|^{1-2s}}\bigg) \\
& \le  C [f]_{C^{0,\alpha}_1}j^{\alpha+2s}.
\end{align*}
On the other hand, \eqref{conexion}, the mean value theorem and Lemma \ref{derivative} with $k=1$ allow us to estimate
 \begin{align*}
S_2 &\le C [f]_{C_1^{0,\alpha}}\sum_{|m|> 2j}|H_s(|m-j|)- H_s(|m|)||m|^\alpha \\
&\le C [ f]_{C_1^{0,\alpha}}j \sum_{|m|> j}  \frac{|m|^\alpha}{|m|^{2-2s}} \le
 C [f]_{C_1^{0,\alpha}} j^{\alpha+2s}.
\end{align*}

\noindent$(ii).$ Again, it is enough to prove only the case $k=0$. By Lemma~\ref{cero},
\begin{align*}
D_+&((-\Delta_1)^{-s}f_j)-D_+((-\Delta_1)^{-s}f_0) \\
&=\big((-\Delta_1)^{-s}f_{j+1}-(-\Delta_1)^{-s}f_j\big)-\big((-\Delta_1)^{-s}f_1-(-\Delta_1)^{-s}f_0\big) \\
&=\sum_{|m|>0}\Big[\big(K^1_{-s}(m-(j+1))-K^1_{-s}(m-j)\big)-\big(K^1_{-s}(m-1)-K^1_{-s}(m)\big)\Big](f_m-f_0).
\end{align*}
Proceeding as in $(i)$, we decompose into the sums
$T_1=\sum_{0<|m|\le2j}$ and $T_2=\sum_{|m|>2j}$. To estimate $T_1$,
we use the estimates
$$
|K^1_{-s}(m-1) - K^1_{-s}(m)|\le\frac{C_s}{|m|^{2-2s}}, \qquad m\neq 0,
$$
and
$$
|K_{-s}(m-(j+1))- K_{-s}(m-j)| \le\frac{C_s}{|m-j|^{2-2s}}, \qquad m\neq j.
$$ 
They are deduced from \eqref{conexion}, the mean value theorem and Lemma \ref{derivative} with $k=1$ for $m\neq 1$ and $m\neq j+1$, respectively. The particular cases $m=1$ and $m=j+1$ are trivial.

Then, by observing that $2s <1$, we can proceed as in $(i)$,
arriving at $T_1\le C [ f]_{C_1^{0,\alpha}}j^{\alpha+2s-1}.$
Regarding the term $T_2$, we write, up to a multiplicative constant depending on~$s$ (see~\eqref{conexion}),
\begin{multline*}
\big(K^1_{-s}(m-(j+1))-K^1_{-s}(m-j)\big)-\big(K^1_{-s}(m-1)-K^1_{-s}(m)\big) \\
=H_s(|m-(j+1)|)-H_s(|m-j|) -(H_s(|m-1|)-H_s(|m|)).
\end{multline*}
By a repeated application of the mean value theorem and Lemma \ref{derivative} with $k=2$ we then get
$$
\big|\big(K^1_{-s}(m-(j+1))-K^1_{-s}(m-j)\big)-\big(K^1_{-s}(m-1)-K^1_{-s}(m)\big)\big|
\le C_s\frac{j}{|m|^{3-2s}}.
$$
Hence
$$
T_2\leq C[ f]_{C_1^{0,\alpha}}j\sum_{|m|>j}\frac{|m|^\alpha}{|m|^{3-2s}}
\le C[f]_{C_1^{0,\alpha}}j^{\alpha+2s-1}.
$$

\noindent $(iii).$ The proof of $(i)$ can be adapted to this case, details are left to the interested reader.

\qed

\section{The Dirichlet problem for the fractional discrete Laplacian}
\label{Section:Dirichlet}

Throughout this section we fix a finite interval $B^h\subset\Z_h$.
The aim of this section is to show the following existence and uniqueness result.

\begin{thm}[Discrete Dirichlet problem]\label{thm:discreteDirichletproblem}
Let $0<s<{1/2}$ and $f:B^h\to\R$.
Then there exists a unique solution $u:\Z_h\to\R$ to the nonlocal discrete Dirichlet problem
\begin{equation}
\label{laecuacion}
\begin{cases}
(-\Delta_h)^su=f, &\text{in } B^h, \\
u=0, &\text{in } \Z_h\setminus B^h.
\end{cases}
\end{equation}
\end{thm}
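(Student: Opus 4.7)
The plan is to reduce \eqref{laecuacion} to an injectivity question on a finite-dimensional space and then rule out nontrivial kernel elements by a direct energy argument that exploits the positivity of the kernel $K^h_s$. Let
\[
V := \{u:\Z_h\to\R : u_j = 0 \text{ for every } hj\in\Z_h\setminus B^h\},
\]
a finite-dimensional real vector space of dimension $N := \#_h B^h$. By part~$(a)$ of Theorem~\ref{thm:basicProperties}, every $u\in V$ trivially belongs to $\ell_s$, so the linear map
\[
T : V \to \R^{B^h}, \qquad Tu := \bigl((-\Delta_h)^s u_j\bigr)_{hj\in B^h}
\]
is well defined between two $N$-dimensional spaces. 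Solving \eqref{laecuacion} uniquely for every right-hand side $f:B^h\to\R$ is therefore equivalent to showing that $T$ is injective.

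For injectivity, I would symmetrize the sum defining $(-\Delta_h)^s u_j$. Using the symmetry $K^h_s(j-m)=K^h_s(m-j)$ of the kernel in \eqref{eq:frKernelOned} together with Fubini, which is automatic because $u,v\in V$ are finitely supported, a standard rearrangement yields
\[
h\sum_{hj\in\Z_h}(-\Delta_h)^s u_j \cdot v_j \;=\; \frac{h}{2}\sum_{\substack{m,j\in\Z\\ m\neq j}}(u_j-u_m)(v_j-v_m)K^h_s(j-m),\qquad u,v\in V.
\]
Now suppose $Tu=0$ and take $v=u$. On $B^h$ the factor $(-\Delta_h)^s u_j$ vanishes, and on $\Z_h\setminus B^h$ the factor $u_j$ vanishes, so the left-hand side is zero. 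Hence
\[
\sum_{\substack{m,j\in\Z\\ m\neq j}}(u_j-u_m)^2 K^h_s(j-m)=0,
\]
and since $K^h_s(j-m)>0$ whenever $j\neq m$ by part~$(b)$ of Theorem~\ref{thm:basicProperties}, every term in this sum must vanish. Therefore $u_j=u_m$ for all $j,m\in\Z$, i.e., $u$ is constant on $\Z_h$; as $u\equiv 0$ on the nonempty set $\Z_h\setminus B^h$, this constant is zero, and $u\equiv 0$.

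I do not foresee a genuine obstacle in this argument: the symmetrization is a purely algebraic manipulation of a finite sum (no convergence estimates are needed), and the strict positivity of $K^h_s$ away from the diagonal, provided by Theorem~\ref{thm:basicProperties}$(b)$, supplies the coercivity that forces $u\equiv 0$. In fact the argument works verbatim for every $0<s<1$, so the hypothesis $0<s<1/2$ in the statement is not needed for existence and uniqueness themselves; presumably it is imposed for consistency with the later applications, where the fractional integral $(-\Delta_h)^{-s}$ and the fractional Sobolev embedding $\|u\|_{\ell_h^{2/(1-2s)}}\le C_s\|(-\Delta_h)^{s/2}u\|_{\ell^2_h}$ are used.
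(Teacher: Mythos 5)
Your proof is correct, and it takes a genuinely different (and more elementary) route from the paper's. The paper defines the space $H_0^s(B^h)$ as a Hilbert space, invokes the fractional discrete Poincar\'e inequality~\eqref{discretePoincareinequality} both to prove completeness and to show that $v\mapsto\langle f,v\rangle_{\ell^2_h}$ is a bounded functional, and then applies the Riesz representation theorem; the classical solvability is recovered afterwards via the symmetrization identity of Lemma~\ref{lemmaquefalta}. You instead observe from the outset that the problem lives in a finite-dimensional space and reduce it to showing that the linear map $T$ is injective, which you then prove by the \emph{same} symmetrization identity and the strict positivity of the kernel from Theorem~\ref{thm:basicProperties}$(b)$. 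Both arguments hinge on the identity $\langle(-\Delta_h)^{s}u,u\rangle_{\ell^2_h}=\tfrac{h}{2}\sum_{j\neq m}(u_j-u_m)^2K^h_s(j-m)$, but your route bypasses the Sobolev--Poincar\'e machinery entirely; in exchange, the paper's framework makes the Hilbert space $H_0^s(B^h)$ and the Poincar\'e inequality explicit, which are of independent interest and are stated as separate results. Your closing remark is also accurate: the restriction $0<s<1/2$ plays no role in the existence--uniqueness argument itself, for either approach; in the paper it enters through the Poincar\'e inequality (whose proof uses $2/(1-2s)>2$), whereas your argument works verbatim for every $0<s<1$. One small point to be explicit about: the rearrangement that produces the symmetric bilinear form uses Fubini on a doubly infinite sum, and absolute convergence should be noted. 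It holds because the terms vanish unless at least one of $j,m$ lies in the finite support of $u$ or $v$, and for each fixed such index the remaining sum over the other index is controlled by $\sum_{m\neq 0}K^h_s(m)<\infty$, which follows from~\eqref{eq:frKernelEst}.
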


Before presenting the proof we need some preliminaries.

We first observe that if $u:\Z_h\to\R$ is a bounded function then
$(-\Delta_h)^su$ is well defined and bounded, with
\begin{equation*}
h^{2s}\|(-\Delta_h)^su\|_{\ell^\infty_h}\leq C_s\|u\|_{\ell^\infty_h}.
\end{equation*}
Indeed, for any $hj\in\Z_h$, by \eqref{eq:puntualdiscreta} and \eqref{eq:frKernelEst},
$$
|(-\Delta_h)^su_j|\leq \frac{C_s}{h^{2s}}\sum_{m\neq j}\frac{2\|u\|_{\ell^\infty_h}}{|j-m|^{1+2s}}
=\frac{C_s}{h^{2s}}\|u\|_{\ell^\infty_h}\sum_{m\neq0}\frac{1}{|m|^{1+2s}}.
$$
In particular, $(-\Delta_h)^su$ is a well defined bounded function
whenever $u\in\ell^p_h$, for any $1\leq p\leq\infty$.
We also observe that, for $0<s<1$,
\begin{equation}
\label{acotacionL2h}
\text{if } u\in\ell^2_h \text{ then } (-\Delta_h)^su\in\ell^2_h,
\text{ with } \|(-\Delta_h)^su\|_{\ell^2_h}\leq
\frac{4^s}{h^{2s}}\|u\|_{\ell^2_h}.
\end{equation}
This follows, for example, by using the Fourier transform, which we now introduce.
Let $\T_h=\R/(2\pi\Z_h)=\R/(2\pi h\Z)$, the one dimensional torus of length $2\pi h$, which we identify
with the interval $[-h\pi,h\pi)$. We denote $L^2_h=L^2(\T_h)$ with inner product
$$
\langle U,V\rangle_{L^2_h} = \int_{-h\pi}^{h\pi}U(\theta)\overline{V(\theta)}\,d\theta.
$$
Then the set of exponentials
$\big\{(2\pi h)^{-1/2}e^{ij\theta/h}:j\in\Z,\theta\in\T_h\big\}$,
where $i$ denotes the imaginary unit,
forms an orthonormal basis of $L^2_h$.
For an integrable function $U:\T_h\to\R$, its Fourier series is given by
$$
S[U](\theta)=\frac{1}{(2\pi h)^{1/2}}\sum_{j\in\Z}\widehat{U}(j)e^{ij\theta/h},
$$
where
$$
\widehat{U}(j)=\frac{1}{(2\pi h)^{1/2}}\int_{-h\pi}^{h\pi}U(\theta)e^{-ij\theta/h}\,d\theta,\quad j\in\Z.
$$
Given $u:\Z_h\to\R$, its Fourier transform is a function defined on $[-h\pi,h\pi)$
whose Fourier coefficients are given by the sequence $\{u_j\}_{j\in\Z}$. In other words,
if $u\in\ell^1_h$ then we define
$$
\mathcal{F}_{\Z_h}u(\theta)=\sum_{j\in\Z}u_je^{ij\theta/h},\quad\theta\in[-h\pi,h\pi).
$$
Then the operator $u\mapsto\mathcal{F}_{\Z_h}u$ extends as an isometry from $\ell^2_h$
into $L^2_h$, with inverse given by
$$
\mathcal{F}^{-1}_{\Z_h}U(j)=\widehat{U}(j).
$$
Let us then prove \eqref{acotacionL2h}. We can easily check that
if $u\in\ell^2_h$ then
$$
\mathcal{F}_{\Z_h}(-\Delta_hu)(\theta) =
\bigg[\frac{4}{h^2}\sin^2\Big(\frac{\theta}{2h}\Big)\bigg]\mathcal{F}_{\Z_h}u(\theta).
$$
It is a simple exercise to verify that our semigroup definition \eqref{definition}
coincides with the Fourier transform definition
\begin{equation}
\label{Fouriertransformdefinition}
\mathcal{F}_{\Z_h}\big[(-\Delta_h)^su\big](\theta)
= \Big[\frac{4}{h^2}\sin^2\Big(\frac{\theta}{2h}\Big)\Big]^s
\mathcal{F}_{\Z_h}u(\theta),
\end{equation}
for $0<s<1$. Then \eqref{acotacionL2h} follows by noticing that the Fourier
multiplier
$$
m_s(\theta)=\bigg[\frac{4}{h^2}\sin^2\Big(\frac{\theta}{2h}\Big)\bigg]^s,\quad\theta\in\T_h,
$$
is a bounded function on $[-h\pi,h\pi)$, with
$$
\|m_s\|_{L^\infty(\T_h)}=\frac{4^s}{h^{2s}}.
$$

\begin{lem}
\label{lemmaquefalta}
Let $u,v\in\ell^2_h$. Then, for any $0<s<1$,
\begin{align*}
\langle(-\Delta_h)^{s}u,v\rangle_{\ell^2_h} 
&= \langle(-\Delta_h)^{s/2}u,(-\Delta_h)^{s/2}v\rangle_{\ell^2_h} \\
&= \frac{h}{2}\sum_{j\in\Z}\sum_{m\in\Z,m\neq j}(u_j-u_m)(v_j-v_m)K^h_s(j-m).
\end{align*}
\end{lem}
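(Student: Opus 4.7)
The plan is to prove the two equalities separately. For the first identity $\langle(-\Delta_h)^s u,v\rangle_{\ell^2_h}=\langle(-\Delta_h)^{s/2}u,(-\Delta_h)^{s/2}v\rangle_{\ell^2_h}$ I would appeal to the Fourier characterization~\eqref{Fouriertransformdefinition}. Since $\mathcal{F}_{\Z_h}$ is, up to a normalization, an isometry between $\ell^2_h$ and $L^2_h$ and $(-\Delta_h)^s$ acts as multiplication by the real, bounded, nonnegative symbol $m_s(\theta)=[4h^{-2}\sin^2(\theta/(2h))]^s$, Plancherel converts both sides of the equality to the same integral
$$
\int_{-h\pi}^{h\pi} m_s(\theta)\,\mathcal{F}_{\Z_h}u(\theta)\,\overline{\mathcal{F}_{\Z_h}v(\theta)}\,d\theta,
$$
where on the right one uses that $m_{s/2}$ is real and $m_{s/2}^2=m_s$. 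This can equivalently be phrased as the semigroup identity $(-\Delta_h)^{s/2}\circ(-\Delta_h)^{s/2}=(-\Delta_h)^s$ combined with the self-adjointness of $(-\Delta_h)^{s/2}$ on $\ell^2_h$.

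For the second identity I would begin from the pointwise formula~\eqref{eq:puntualdiscreta}, which applies because $u\in\ell^2_h\subset\ell_s$; this inclusion holds by Cauchy--Schwarz against the $\ell^2(\Z)$ sequence $\{(1+|m|)^{-1-2s}\}_{m\in\Z}$. Thus
$$
\langle(-\Delta_h)^s u,v\rangle_{\ell^2_h}=h\sum_{j\in\Z}v_j\sum_{m\neq j}(u_j-u_m)K^h_s(j-m).
$$
Using the symmetry $K^h_s(j-m)=K^h_s(m-j)$ and swapping the labels $j\leftrightarrow m$ in the double sum, the same quantity equals $h\sum_{j}\sum_{m\neq j}v_m(u_m-u_j)K^h_s(j-m)$. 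Averaging these two equivalent expressions yields the symmetric bilinear form claimed.

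The main technical obstacle is justifying the reordering and symmetrization, which requires absolute convergence of the double sum. To this end I would estimate
$$
|u_j-u_m|\,|v_j-v_m|\le u_j^2+u_m^2+v_j^2+v_m^2,
$$
and use $\Sigma^h_s:=\sum_{m\neq0}K^h_s(m)=A_sh^{-2s}/s<\infty$ (computed in the Remark following Theorem~\ref{thm:basicProperties}) together with the symmetry of $K^h_s$ to deduce
$$
\sum_{j\in\Z}\sum_{m\neq j}(u_j^2+u_m^2)K^h_s(j-m)\le\tfrac{2\Sigma^h_s}{h}\,\|u\|_{\ell^2_h}^2,
$$
and the analogous bound for $v$. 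Beyond this bookkeeping I expect no further difficulty, and Fubini together with the symmetrization step then completes the argument.
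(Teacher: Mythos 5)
Your argument is correct and follows essentially the same route as the paper: Plancherel together with the Fourier multiplier identity $m_{s/2}^2=m_s$ for the first equality, then the pointwise formula~\eqref{eq:puntualdiscreta}, the symmetry of $K^h_s$, and the $j\leftrightarrow m$ relabeling plus averaging for the second. The only genuine addition is your explicit verification of absolute convergence via $|u_j-u_m|\,|v_j-v_m|\le u_j^2+u_m^2+v_j^2+v_m^2$ and the finiteness of $\Sigma^h_s$, which the paper invokes implicitly when it cites Fubini's theorem.
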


\begin{proof}
In view of \eqref{acotacionL2h} we can use Plancherel's identity
and the Fourier transform characterization \eqref{Fouriertransformdefinition} to write
\begin{equation}
\label{equation1}
\begin{aligned}
\langle(-\Delta_h)^{s/2}u,(-\Delta_h)^{s/2}v\rangle_{\ell^2_h}
&=\langle u,(-\Delta_h)^{s}v\rangle_{\ell^2_h} \\
&=h\sum_{j\in\Z}u_j(-\Delta_h)^{s}v_j \\
&=h\sum_{j\in\Z}\sum_{m\in\Z,m\neq j}u_j\big(v_j-v_m\big)K_s^h(j-m).
\end{aligned}
\end{equation}
By interchanging the roles of $j$ and $m$ above and using
Fubini's Theorem and the symmetry $K_s^h(m-j)=K_s^h(j-m)$,
we can also write
\begin{equation}
\label{equation2}
\begin{aligned}
\langle(-\Delta_h)^{s/2}u,(-\Delta_h)^{s/2}v\rangle_{\ell^2_h}
&=h\sum_{m\in\Z}\sum_{j\in\Z,j\neq m}u_m\big(v_m-v_j\big)K_s^h(m-j) \\
&=-h\sum_{j\in\Z}\sum_{m\in\Z,m\neq j}u_m\big(v_j-v_m\big)K_s^h(j-m).
\end{aligned}
\end{equation}
After adding \eqref{equation1} and \eqref{equation2} we get the conclusion.
\end{proof}

\begin{rem}
It is clear from \eqref{equation1} and the Cauchy--Schwarz inequality that
if $u\in\ell^2_h$ then the following interpolation inequality holds:
$$
\|(-\Delta_h)^{s/2}u\|_{\ell^2_h}\leq\|u\|_{\ell^2_h}\|(-\Delta_h)^su\|_{\ell^2_h}.
$$
\end{rem}

The following important result will be proved in Subsection \ref{subsection:Sobolev}.

\begin{thm}[Fractional discrete Sobolev and Poincar\'e inequalities]
\label{thm:SobolevandPoincare}
Let $0<s<1/2$.
There is a constant $C_s>0$ depending only on $s$ such that the fractional discrete Sobolev inequality
$$
\|u\|_{\ell^{2/(1-2s)}_h}\leq C_s\|(-\Delta_h)^{s/2}u\|_{\ell^2_h} =
C_s\bigg(\frac{h}{2}\sum_{j\in\Z}\sum_{m\in\Z,m\neq j}|u_j-u_m|^2K^h_s(j-m)\bigg)^{1/2},
$$
holds for any function $u:\Z_h\to\R$ with compact support $\supp(u)\subset\Z_h$. In particular,
we have the fractional discrete Poincar\'e inequality
\begin{equation}
\label{discretePoincareinequality}
\|u\|_{\ell^2_h}\leq C_sh^s\big(\#_h\supp(u)\big)^s\|(-\Delta_h)^{s/2}u\|_{\ell^2_h},
\end{equation}
where, for a set $E\subset\Z_h$, the notation $\#_hE$ means
the number of points in~$E$.
\end{thm}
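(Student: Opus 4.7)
The plan is to derive the Sobolev inequality by combining the $\ell^p_h$--$\ell^q_h$ mapping properties of the fractional integral $(-\Delta_h)^{-s/2}$ from Theorem~\ref{lem:kernelFractionalIntegral}$(c)$, applied at the critical exponent, with the identification of $(-\Delta_h)^{-s/2}$ as a left-inverse of $(-\Delta_h)^{s/2}$. For $u:\Z_h\to\R$ with compact support I set $v=(-\Delta_h)^{s/2}u$. Since $u\in\ell^2_h$, estimate \eqref{acotacionL2h} (valid for any exponent in $(0,1)$, in particular for $s/2\in(0,1/4)$) gives $v\in\ell^2_h$; moreover, the pointwise formula \eqref{eq:puntualdiscreta} together with the kernel estimate \eqref{eq:frKernelEst} and the compact support of~$u$ furnish the decay $|v_j|=O(|j|^{-(1+s)})$ as $|j|\to\infty$, which places $v$ in the domain $\ell_{-s/2}$ of the fractional integral. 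A Fourier-multiplier calculation based on \eqref{Fouriertransformdefinition} then yields the composition identity $(-\Delta_h)^{-s/2}v=u$: the total symbol reads $m_{-s/2}(\theta)\,m_{s/2}(\theta)\,\mathcal{F}_{\Z_h}u(\theta)=\mathcal{F}_{\Z_h}u(\theta)$, with the vanishing of $m_{s/2}$ at $\theta=0$ absorbing the singularity of~$m_{-s/2}$.

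With this identification in hand, I apply Theorem~\ref{lem:kernelFractionalIntegral}$(c)$ with fractional parameter $s/2\in(0,1/4)$, $p=2$ and $q=2/(1-2s)$. These exponents satisfy $1/q=(1-2s)/2=1/p-2(s/2)$, i.e., the equality (critical) case of the HLS hypothesis. Consequently the power of $h$ on the right-hand side of \eqref{HLS} is zero, and
\[
\|u\|_{\ell^{2/(1-2s)}_h}=\|(-\Delta_h)^{-s/2}v\|_{\ell^{2/(1-2s)}_h}\le C_s\|v\|_{\ell^2_h}=C_s\|(-\Delta_h)^{s/2}u\|_{\ell^2_h}.
\]
The double-sum representation of the right-hand side is furnished by Lemma~\ref{lemmaquefalta}: taking $v=u$ in its two equalities yields
\[
\|(-\Delta_h)^{s/2}u\|_{\ell^2_h}^{2}=\langle(-\Delta_h)^su,u\rangle_{\ell^2_h}=\frac{h}{2}\sum_{j\in\Z}\sum_{m\in\Z,\,m\neq j}|u_j-u_m|^{2}K^h_s(j-m).
\]

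For the Poincar\'e inequality \eqref{discretePoincareinequality}, I apply the discrete H\"older inequality \eqref{Holder} on $\supp(u)$. Writing $|u_j|^{2}=|u_j|^{2}\mathbf{1}_{\supp(u)}(j)$ and pairing the factors $|u_j|^{2}$ and $\mathbf{1}_{\supp(u)}(j)$ in $\ell^{1/(1-2s)}_h$ and $\ell^{1/(2s)}_h$ respectively (conjugate exponents since $(1-2s)+2s=1$) produces
\[
\|u\|_{\ell^2_h}^{2}=h\sum_{j\in\Z}|u_j|^{2}\mathbf{1}_{\supp(u)}(j)\le\|u\|_{\ell^{2/(1-2s)}_h}^{2}\bigl(h\cdot\#_h\supp(u)\bigr)^{2s},
\]
and substituting the Sobolev inequality just established yields \eqref{discretePoincareinequality}.

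The main obstacle is the rigorous verification of the left-inverse identity $(-\Delta_h)^{-s/2}(-\Delta_h)^{s/2}u=u$. While transparent on the Fourier side, it requires checking that $v=(-\Delta_h)^{s/2}u$ actually lies in $\ell_{-s/2}$ (so that the pointwise series \eqref{formulapuntualpotenciasnegativas} for $(-\Delta_h)^{-s/2}v$ converges) and reconciling the pointwise semigroup definition \eqref{eq:fractionalintegralsemigrupo} with the Fourier-multiplier definition on this class of functions. The compact support of~$u$ together with the decay of~$K^h_{s/2}$ render both the integrability check and the reconciliation routine, so this obstacle amounts to careful bookkeeping rather than a deep difficulty.
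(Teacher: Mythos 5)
Your route to the Sobolev inequality is genuinely different from the paper's. The paper transplants to $\Z$ the elementary level-set argument of Savin and Valdinoci: it establishes a discrete Sobolev embedding for sets (Lemma~\ref{lem:1}), an abstract dyadic-sequence inequality (Lemma~\ref{lem:2}), and a lemma bounding $\sum_j 2^{2j}a_{j+1}a_j^{-2s}$ by the Gagliardo-type double sum (Lemma~\ref{lem:3}), then combines these combinatorial facts to obtain the inequality on $\Z$ and passes to $\Z_h$ by scaling. You instead take the classical potential-theoretic road: Sobolev from Hardy--Littlewood--Sobolev via the factorization $u=(-\Delta_h)^{-s/2}(-\Delta_h)^{s/2}u$ at the critical exponent. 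Your exponent arithmetic is correct, the $h$-power in \eqref{HLS} indeed vanishes at the critical pair $(p,q)=(2,2/(1-2s))$ with fractional parameter $s/2$, and your reduction of the Poincar\'e inequality to Sobolev via H\"older is word-for-word what the paper does. The paper's route buys self-containment (no appeal to the Stein--Wainger theorem that underlies \eqref{HLS}, and no composition identity at all); your route is shorter once the composition identity is granted and ties the Sobolev inequality directly to the fundamental solution.

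Where you are too optimistic is in dismissing the verification of $(-\Delta_h)^{-s/2}(-\Delta_h)^{s/2}u=u$ as ``careful bookkeeping.'' The estimate in Theorem~\ref{lem:kernelFractionalIntegral}$(c)$ is proved for the pointwise convolution realization $v\mapsto\sum_m K^h_{-s/2}(\cdot-m)v_m$ of the negative power, whereas the identity $m_{-s/2}(\theta)\,m_{s/2}(\theta)\equiv1$ lives on the Fourier side. To bridge the two one must show, first, that the sequence $K^h_{-s/2}$, which lies in $\ell^2(\Z)$ because $2(1-s)>1$ but is \emph{not} in $\ell^1(\Z)$, satisfies $\mathcal{F}_{\Z_h}K^h_{-s/2}=m_{-s/2}$ as elements of $L^2(\T_h)$; and, second, that for $v=(-\Delta_h)^{s/2}u\in\ell^2_h$ the pointwise convolution, which converges absolutely by Cauchy--Schwarz, agrees with the $L^2$-multiplier realization. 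Both statements are true (the first by a Tonelli and monotone-convergence argument applied to the truncations $\Gamma(s/2)^{-1}\int_0^T G(m,t/h^2)\,t^{s/2-1}\,dt$, the second by the $\ell^2$ convolution theorem), but neither is in the paper and neither is free; together they are comparable in length to the Savin--Valdinoci lemmas you would be replacing. As written you have a correct and coherent outline with one genuine intermediate lemma to prove; with that lemma supplied the argument closes.
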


\begin{lem}
Let $0<s<1/2$. If we endow the set of functions
$$
H_0^s(B^h):=\big\{u:\Z_h\to\R:u=0 \text{ on } \Z_h\setminus B^h\big\},
$$
with the inner product
$$
\langle u,v\rangle_{H_0^s(B^h)} = \langle(-\Delta_h)^{s/2}u,(-\Delta_h)^{s/2}v\rangle_{\ell^2_h},
$$
then $H_0^s(B^h)$ is a Hilbert space.
\end{lem}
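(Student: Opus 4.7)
The plan is to verify three things about the bilinear form $\langle\cdot,\cdot\rangle_{H_0^s(B^h)}$ on the vector space $H_0^s(B^h)$: it is a well-defined symmetric bilinear form, it is positive definite (so it is genuinely an inner product), and the resulting normed space is complete.

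First, I would check that the form is well defined and symmetric bilinear. Any $u\in H_0^s(B^h)$ has finite support, hence belongs to $\ell_h^2$, and so $(-\Delta_h)^{s/2}u\in\ell_h^2$ by~\eqref{acotacionL2h}. Thus the $\ell_h^2$-inner product on the right-hand side makes sense. Bilinearity and symmetry are immediate from those of $\langle\cdot,\cdot\rangle_{\ell_h^2}$ and the linearity of $(-\Delta_h)^{s/2}$; alternatively, one can read them off from the symmetric double-sum representation in Lemma~\ref{lemmaquefalta}.

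The heart of the argument is positive definiteness. Clearly $\langle u,u\rangle_{H_0^s(B^h)}=\|(-\Delta_h)^{s/2}u\|_{\ell_h^2}^2\ge 0$. Suppose $\langle u,u\rangle_{H_0^s(B^h)}=0$ for some $u\in H_0^s(B^h)$. Since $\supp(u)\subset B^h$ is finite, the fractional discrete Poincar\'e inequality \eqref{discretePoincareinequality} from Theorem~\ref{thm:SobolevandPoincare} applies and gives
\[
\|u\|_{\ell_h^2}\leq C_s h^s\bigl(\#_h\supp(u)\bigr)^s\|(-\Delta_h)^{s/2}u\|_{\ell_h^2}=0,
\]
forcing $u\equiv 0$. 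This is the only nontrivial step and it is precisely where the Poincar\'e inequality is essential: without it, one would only obtain a semi-norm.

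Finally I would establish completeness. Two routes are possible. The short one: because $B^h$ is a finite interval, $H_0^s(B^h)$ is a finite-dimensional vector space (isomorphic to $\R^{\#_h B^h}$), and every norm on a finite-dimensional real vector space is complete. For a more intrinsic argument, let $\{u^{(n)}\}\subset H_0^s(B^h)$ be Cauchy in the $H_0^s(B^h)$-norm. By the Poincar\'e inequality $\{u^{(n)}\}$ is also Cauchy in $\ell_h^2$, and since the constant $C_s h^s(\#_h B^h)^s$ is uniform in $n$, one concludes convergence $u^{(n)}\to u$ in $\ell_h^2$ to some $u$ still supported in $B^h$, hence $u\in H_0^s(B^h)$. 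Applying \eqref{acotacionL2h} to $(-\Delta_h)^{s/2}$, which is bounded on $\ell_h^2$, we get $(-\Delta_h)^{s/2}u^{(n)}\to(-\Delta_h)^{s/2}u$ in $\ell_h^2$, so $u^{(n)}\to u$ in the $H_0^s(B^h)$-norm. I expect no real obstacle beyond invoking the Poincar\'e inequality.
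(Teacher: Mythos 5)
Your proof is correct. For positive definiteness, you rely on the fractional discrete Poincar\'e inequality \eqref{discretePoincareinequality}; the paper's primary argument is more elementary and bypasses that theorem entirely: writing $\langle u,u\rangle_{H_0^s(B^h)} = \frac{h}{2}\sum_{j}\sum_{m\neq j}|u_j-u_m|^2 K_s^h(j-m)=0$ via Lemma~\ref{lemmaquefalta} and using the strict positivity of the kernel $K_s^h$ (visible from~\eqref{eq:frKernelOned}) forces $u_j=u_m$ for all $j\neq m$, hence $u\equiv 0$ since $u$ vanishes off $B^h$. The paper only mentions the Poincar\'e inequality as an alternative. For completeness, your finite-dimensionality observation is a genuine simplification over the paper's argument: since $B^h$ is a finite interval, $H_0^s(B^h)\cong\R^{\#_h B^h}$, and any norm on a finite-dimensional real space is complete, so the Cauchy-in-$\ell^2_h$ argument (which both you and the paper spell out) is not strictly necessary. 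That said, the finite-dimensional shortcut still requires positive definiteness to know one has a norm at all, so the Poincar\'e (or kernel-positivity) step cannot be avoided. Both routes are sound; the kernel-positivity argument for definiteness plus the finite-dimensionality argument for completeness would give the shortest self-contained proof.
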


\begin{proof}
Clearly $H_0^s(B^h)$ is a linear space and the bilinear form $\langle u,v\rangle_{H_0^s(B^h)}$ is symmetric,
with $\langle u,u\rangle_{H^s_0(B^h)}\geq0$ for all $u\in H_0^s(B^h)$.
Let us check that $\langle u,u\rangle_{H^s_0(B^h)}=0$ implies $u=0$. Indeed, we have
\[
0\leq\frac{h}{2}\sum_{j\in\Z}\sum_{m\in\Z,m\neq j}|u_j-u_m|^2K^h_s(j-m)=0.
\]
Since the kernel $K_s^h(j-m)$ is positive for $j\neq m$ (see \eqref{eq:frKernelOned}),
we get $u_j=u_m$ for all $j\neq m$.
As $u_m$ is zero for all $m$ outside $B^h$, we get $u=0$ everywhere.
Another way of proving that $u=0$ is by means of the fractional discrete Poincar\'e inequality
\eqref{discretePoincareinequality}.
To show completeness, suppose that $(u^k)_{k\ge0}$ is a Cauchy
sequence in~$H_0^s(B^h)$. Then, by the Poincar\'e inequality \eqref{discretePoincareinequality},
we see that $(u^k)_{k\ge0}$ is a Cauchy sequence in $\ell^2_h$
and so it has a pointwise limit $u\in\ell^2_h$. Observe that $u=0$ in $\Z_h\setminus B^h$
and so, in view of \eqref{acotacionL2h}, $u\in H_0^s(B^h)$. Moreover,
again by \eqref{acotacionL2h}, $u^k\to u$ in $H^s_0(B^h)$, as $k\to\infty$.
\end{proof}

\begin{proof}[Proof of Theorem \ref{thm:discreteDirichletproblem}]
We say that $u:\Z_h\to\R$ is a weak solution to \eqref{laecuacion} if $u\in H_0^s(B^h)$ and
$$
\langle u,v\rangle_{H^s_0(B^h)}=\langle(-\Delta_h)^{s/2}u,(-\Delta_h)^{s/2}v\rangle_{\ell^2_h}
= h\sum_{j\in\Z}f_jv_j=\langle f,v\rangle_{\ell^2_h},
$$
for all $v\in H_0^s(B^h)$.
Let us show that $v\mapsto\langle f,v\rangle_{\ell^2_h}$ is a bounded linear functional in~$H_0^s(B^h)$.
By H\"older's inequality \eqref{Holder} with $p=p'=2$ and the Poincar\'e inequality~\eqref{discretePoincareinequality},
\[
\bigg|h\sum_{j\in\Z}f_jv_j\bigg|\leq \|f\|_{\ell^2_h}\|v\|_{\ell^2_h}
\leq \|f\|_{\ell^2_h}C_sh^s\big(\#_h B^h\big)^s\|v\|_{H_0^s(B^h)}.
\]
Hence the Riesz representation theorem applies and shows that for any
given $f$ there is a unique weak solution $u\in H_0^s(B_h)$. The fact that the first equation
in \eqref{laecuacion} holds (that is, that $u$ is a classical solution)
follows because Lemma \ref{lemmaquefalta} shows that
$\langle(-\Delta_h)^su,v\rangle_{\ell^2_h}=\langle f,v\rangle_{\ell^2_h}$, for all $v:\Z_h\to\R$
such that $v=0$ in $\Z_h\setminus B^h$.
\end{proof}

\section{The discrete maximum principle}
\label{sec:Dirichlet}

To complete the proof of Theorem \ref{thm:Convergence-result} we need the following maximum principle.

\begin{thm}[Discrete maximum principle]
\label{thm:DiscretePoisson1d}
Let $0<s<1$. Fix an interval $B^h_R\subset\Z_h$, $R>0$.
Suppose that $f\in \ell_h^\infty(B^h_R)$ and $g\in\ell_h^\infty(\Z_h\setminus B^h_R)$.
If $u$ is a solution to
\begin{equation}
\label{eq:DDp}
\begin{cases}
(-\Delta_h)^su=f,&\text{in}~B^h_R, \\
u=g, & \text{in}~\Z_h\setminus B^h_R.
\end{cases}
\end{equation}
Then there is a universal constant $C>0$ independent of $s$, $R$ and $h$ such that
\[
\|u\|_{\ell_h^\infty(B^h_R)}\leq CR^{2s}
\|f\|_{\ell_h^\infty(B^h_R)}+\|g\|_{\ell_h^\infty(\Z_h\setminus B_R^h)}.
\]
In particular, uniqueness holds for the Dirichlet problem \eqref{eq:DDp}.
\end{thm}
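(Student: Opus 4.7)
The plan is to apply the standard barrier strategy adapted to the discrete nonlocal setting: first establish a weak (comparison) maximum principle by a pointwise argument, and then exhibit an explicit $s$-superharmonic barrier of size $\sim R^{2s}$.

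For the weak maximum principle I would suppose that $v\colon\Z_h\to\R$ is bounded with $(-\Delta_h)^s v\leq 0$ on $B_R^h$ and $v\leq 0$ on $\Z_h\setminus B_R^h$, and conclude that $v\leq 0$ on $\Z_h$. Since $B_R^h$ is finite and $v\leq 0$ outside of it, if $M:=\sup_{\Z_h}v>0$ then $M$ is attained at some $j_0\in B_R^h$. The pointwise formula \eqref{eq:puntualdiscreta} then yields
\[
(-\Delta_h)^s v_{j_0}=\sum_{m\neq j_0}(v_{j_0}-v_m)K_s^h(j_0-m)\geq 0,
\]
and the strict positivity of $K_s^h$ in \eqref{eq:frKernelEst}, combined with the existence of any $m_0\in\Z_h\setminus B_R^h$ (where $v_{m_0}\leq 0<M$), forces the sum to be strictly positive, contradicting $(-\Delta_h)^s v_{j_0}\leq 0$.

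Next I would take the barrier $\phi:=\chi_{B_{2R}^h}$, so that $0\leq\phi\leq 1$ on all of $\Z_h$. For $j\in B_R^h$ the pointwise formula collapses to $(-\Delta_h)^s\phi_j=\sum_{|hm|>2R}K_s^h(j-m)$, with every index satisfying $|j-m|>R/h$. The lower bound in \eqref{eq:frKernelEst} together with a standard integral comparison for the tail series gives $(-\Delta_h)^s\phi_j\geq \kappa/R^{2s}$ on $B_R^h$ for some positive constant $\kappa$. With $M_f:=\|f\|_{\ell^\infty_h(B_R^h)}$ and $M_g:=\|g\|_{\ell^\infty_h(\Z_h\setminus B_R^h)}$, I define
\[
v_j:=u_j-M_g-\kappa^{-1}R^{2s}M_f\,\phi_j.
\]
In $B_R^h$, $(-\Delta_h)^s v=f-\kappa^{-1}R^{2s}M_f(-\Delta_h)^s\phi\leq M_f-M_f=0$; outside $B_R^h$, $v\leq g-M_g\leq 0$ since $\phi\geq 0$. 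Applying the weak maximum principle shows $v\leq 0$ on $\Z_h$, and because $\phi\equiv 1$ on $B_R^h$ we deduce $u\leq M_g+CR^{2s}M_f$ there. Repeating the argument with $u$ replaced by $-u$ delivers the symmetric lower bound, and the uniqueness for \eqref{eq:DDp} follows by applying the estimate to the difference of two solutions, for which $f\equiv g\equiv 0$.

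The main obstacle I anticipate is to track the constants so that $\kappa$ (and hence the final $C$) is truly independent of $s\in(0,1)$ as the statement demands. The normalizing factor $A_s$ in \eqref{eq:constante1d} vanishes as $s\to 0^+$ (through $|\Gamma(-s)|^{-1}\sim s$) and again as $s\to 1^-$, so it must cancel exactly against the $s^{-1}$-singularity produced by $\sum_{|k|\geq R/h}|k|^{-1-2s}\asymp s^{-1}(R/h)^{-2s}$ near $s=0$, and against the degeneration of $\chi_{B_{2R}^h}$ as a barrier near $s=1$ (where the right barrier for $-\Delta_h$ is the quadratic $R^2-(hj)^2$). If the indicator barrier does not provide uniform control near $s=1$, a fallback would be to discretize Getoor's profile $C_s(R^2-x^2)_+^s$, whose continuous counterpart satisfies $(-\Delta)^s\Phi\equiv 1$ on $(-R,R)$, and to control the discretization error via Theorems~\ref{thm:Holder} and~\ref{thm:consistencia1d}.
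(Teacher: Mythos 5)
Your weak comparison principle is exactly the paper's Lemma~\ref{lem:maximum} and is correct. The genuine gap is in the barrier. Your own worry about the $s\to1^-$ regime is not a side concern but a real obstruction: using the closed form of the tail sum, $\sum_{|k|\ge N+1}K_s^h(k)=\frac{A_s}{s\,h^{2s}}\frac{\Gamma(N+1-s)}{\Gamma(N+1+s)}$, and $s|\Gamma(-s)|=\Gamma(1-s)$, one gets for $j\in B_R^h$ that $(-\Delta_h)^s\phi_j\asymp\frac{4^s\Gamma(1/2+s)}{\sqrt{\pi}\,\Gamma(1-s)}\,R^{-2s}$, and the factor $\Gamma(1-s)^{-1}\to0$ as $s\to1^-$. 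Thus $\kappa\to0$ and the resulting $C=\kappa^{-1}$ is \emph{not} independent of $s$, contrary to the statement. The indicator barrier only sees the tail of the kernel, so it captures the $s^{-1}$ cancellation near $s=0$ but has nothing to offset the $(1-s)^{-1}$ degeneration of $|\Gamma(-s)|^{-1}$ near $s=1$.

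The paper's Lemma~\ref{lem:barrier} fixes precisely this by taking the truncated parabola $W(x)=(4R^2-|x|^2)\chi_{\{|x|<R\}}$ and the elementary inequality $2w_j-w_{j-m}-w_{j+m}\ge\min\{2R^2,2|hm|^2\}$. The long-range piece $|hm|\ge R$ contributes (as in your computation) a term $\asymp\frac{A_s}{s}R^{2-2s}$, while the short-range piece $|hm|<R$, through the $|m|^2K_s^h(m)$ weight, contributes $\asymp\frac{A_s}{1-s}R^{2-2s}$. Their sum is proportional to $\frac{\Gamma(1/2+s)}{\sqrt{\pi}|\Gamma(-s)|}\bigl[\frac{1}{2s}+\frac{1}{2-2s}\bigr]=\frac{\Gamma(1/2+s)}{2\sqrt{\pi}\,\Gamma(2-s)}$, which stays bounded away from zero on all of $(0,1)$; this is what delivers the universal $M$. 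The rest of the paper's argument then normalizes by $MR^{2-2s}$ rather than by a factor depending only on the tail, and concludes as you do with Lemma~\ref{lem:maximum}. Your fallback via a discretized Getoor profile and Theorems~\ref{thm:Holder},~\ref{thm:consistencia1d} is much heavier machinery than needed, and those theorems give errors in powers of $h$ relative to $R$, so they would not yield an $h$-free bound anyway. Replacing $\chi_{B_{2R}^h}$ by the truncated parabola and inserting the $\min\{2R^2,2|hm|^2\}$ estimate closes the gap cleanly.
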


For the proof we need a barrier which is constructed in
Lemma~\ref{lem:barrier} and the nonlocal maximum principle stated in Lemma~\ref{lem:maximum}
(see also \cite{Huang-Oberman}).

By the symmetry of the kernel $K_s^h(m)$ in $m\neq0$ we can always write
\begin{equation}
\label{eq:rewrite}
(-\Delta_h)^su_{j}= \frac{1}{2} \sum_{m\neq 0}
\big(2u_{j}-u_{j-m}-u_{j+m}\big)K^h_s(m).
\end{equation}

\begin{lem}
\label{lem:barrier}
Let $0<s<1$ and $R>0$. Define
$$
W(x)=\begin{cases}
4R^2-|x|^2, & \text{for}~|x|<R, \\
0, &\text{for}~|x|\geq R.
\end{cases}
$$
Then the function $w_j := (r_hW)_j = W(hj)$ satisfies
\[
\begin{cases}
(-\Delta_h)^sw\ge M R^{2-2s},& \text{ in } B_R^h, \\
w=0, & \text{ in } \Z_h\setminus B_R^h,
\end{cases}
\]
where $M>0$ is a constant independent of $R$, $s$ and $h$.
\end{lem}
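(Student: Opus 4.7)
The plan is to use the symmetric pointwise formula \eqref{eq:rewrite} and show that for every $hj\in B_R^h$ each summand $2w_j-w_{j+m}-w_{j-m}$ is nonnegative, then to extract the quantitative lower bound from the tail contribution with $|hm|>2R$. Fix $hj\in B_R^h$, so $|hj|<R$ and $w_j=4R^2-(hj)^2\in[3R^2,4R^2]$.

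For each $m\neq 0$ I would split into three cases according to the position of the reflected points $h(j\pm m)$ relative to $B_R^h$. If both lie inside $B_R^h$ (Case A), expanding $w_{j\pm m}=4R^2-h^2(j\pm m)^2$ gives $2w_j-w_{j+m}-w_{j-m}=2h^2m^2\ge 0$, reflecting the exactness of $\Delta_h$ on quadratics. If both lie outside $B_R^h$ (Case B), then $w_{j\pm m}=0$ and the summand equals $2w_j\ge 6R^2$. If exactly one lies outside, say $h(j+m)\notin B_R^h$ (Case C), then $w_{j+m}=0$ while $w_{j-m}=4R^2-h^2(j-m)^2$, so
\[
2w_j-w_{j+m}-w_{j-m}=4R^2-2h^2j^2+h^2(j-m)^2\ge 2\bigl(2R^2-(hj)^2\bigr)>2R^2,
\]
where I have kept the nonnegative term $h^2(j-m)^2$. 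Consequently every summand is nonnegative and, since $K_s^h(m)>0$, one already obtains $(-\Delta_h)^sw\ge 0$ on $B_R^h$.

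For the quantitative estimate I would restrict the sum to the tail $|hm|>2R$. The triangle inequality then forces $|h(j\pm m)|\ge|hm|-|hj|>R$, so Case B applies and each summand is $2w_j\ge 6R^2$. By the lower kernel bound in \eqref{eq:frKernelEst},
\[
(-\Delta_h)^sw_j\ge 3R^2\sum_{|m|>2R/h}K_s^h(m)\ge \frac{3c_sR^2}{h^{2s}}\sum_{|m|>2R/h}\frac{1}{|m|^{1+2s}},
\]
and a standard integral comparison $\sum_{|m|>N}|m|^{-(1+2s)}\asymp 1/(s\,N^{2s})$ with $N=\lceil 2R/h\rceil$ produces $\sum_{|m|>2R/h}K_s^h(m)\ge CR^{-2s}$, giving $(-\Delta_h)^sw_j\ge MR^{2-2s}$.

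The main subtlety is twofold. First, one must work with the symmetric form \eqref{eq:rewrite} rather than the asymmetric pointwise formula \eqref{eq:puntualdiscreta}, because the crucial cancellation producing the nonnegative summand in Case C (and, in Case B for $|hm|>2R$, the clean $2w_j$) relies on pairing $m$ with $-m$. Second, to keep $M$ independent of $s$ one has to track the interplay between the constant $c_s$ from \eqref{eq:frKernelEst} and the factor $1/(2s)$ arising from the tail integral; this is most cleanly done by invoking the exact identity $\sum_{m\in\Z}K_s^h(m)=A_sh^{-2s}/s$ recorded after Theorem~\ref{thm:basicProperties} and the fact that $A_s/s$ is bounded uniformly for $s\in(0,1)$, rather than by manipulating the one-sided bound directly.
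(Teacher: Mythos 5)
Your case analysis establishing $2w_j-w_{j+m}-w_{j-m}\ge 0$ in all three cases, and $\ge 2w_j\ge 6R^2$ on the tail $|hm|>2R$, is correct, and the choice to work with the symmetrized form \eqref{eq:rewrite} is exactly right. However, the quantitative step has a genuine gap: restricting to the tail cannot yield an $M$ independent of $s$. Tracking constants through your chain, the lower bound you get is of the form
\[
(-\Delta_h)^sw_j \;\ge\; 3R^2\sum_{|m|>2R/h}K^h_s(m)\;\ge\;\frac{3\,c_s}{s\cdot 4^s}\,R^{2-2s},
\]
with $c_s\asymp A_s$ (see Lemma~\ref{eq:lowerBound}(b) and \eqref{eq:frKernelOned}). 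Since $|\Gamma(-s)|\to\infty$ as $s\to 1^-$, one has $A_s\sim 2(1-s)\to 0$, so the constant $3c_s/(s\,4^s)$ vanishes as $s\to 1^-$. This also defeats the rescue you propose in your final paragraph: the identity $\sum_{m\in\Z}K^h_s(m)=A_sh^{-2s}/s$ does show $A_s/s$ is bounded \emph{above} on $(0,1)$, but $A_s/s\to 0$ as $s\to 1^-$, so it provides no uniform \emph{lower} bound, and in any case the full-sum identity does not combine with a pointwise lower bound on the increments that is uniform in $m$ (your Case~A only gives $2h^2m^2$, arbitrarily small for $|m|$ small).

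The fix is to use the sharper increment bound $2w_j-w_{j+m}-w_{j-m}\ge\min\{2R^2,2(hm)^2\}$ (this is essentially what the paper imports from Huang--Oberman), and then split the resulting sum into the tail $|hm|\ge R$ \emph{and} the near-origin part $0<|hm|<R$ weighted by $|m|^2$. The tail contributes a factor $\sim\frac{1}{2s}$ and the near part a factor $\sim\frac{1}{2-2s}$; multiplied by $\frac{\Gamma(1/2+s)}{\sqrt{\pi}|\Gamma(-s)|}$ (which vanishes linearly at both $s=0$ and $s=1$) their sum stays bounded below on all of $(0,1)$. Near $s=0$ the tail dominates (your case), but near $s=1$ the kernel concentrates at $|m|=1$ and only the $|m|^2$-weighted near sum survives; that is precisely the contribution your argument discards.
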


\begin{proof}
First, for each $hm\in\Z_h$, it is not difficult to prove that
\[
2w_j-w_{j-m}-w_{j+m}\ge \min\{2R^2, 2|hm|^2\}, \quad \text{ for } hj \in B_R^h,
\]
see, for example, the proof of \cite[Lemma 5]{Huang-Oberman} for $R=1$.
With this and taking into account \eqref{eq:rewrite} we have, for $hj\in B_R^h$,
\[
(-\Delta_h)^sw_{j} \ge \frac{1}{2} \sum_{m\neq 0}
\min\{2R^2, 2|hm|^2\}K^h_s(m)
= R^2\sum_{|hm|\ge R}K^h_s(m)+h^2
\sum_{\substack {|hm|< R\\ m\neq 0}}|m|^2K^h_s(m).
\]
Now we use the explicit expression \eqref{eq:frKernelOned} and
Lemma~\ref{eq:lowerBound}$(b)$. Then, there exist constants $C_1$
and $C_2$ independent of $R$, $s$ and $h$ such that, on one hand,
\begin{align*}
R^2\sum_{|hm|\ge R}K^h_s(m)&\ge \frac{R^2}{h^{2s}}
\sum_{|m|\ge R/h} \frac{4^s\Gamma(1/2+s)}{2^{1+2s}\sqrt{\pi}|\Gamma(-s)|}
\frac{1}{|m|^{1+2s}} \\
&\ge C_1 \frac{\Gamma(1/2+s)}{2\sqrt{\pi}|\Gamma(-s)|} \frac{R^2}{h^{2s}}
\int_{|x|\ge R/h} \frac{1}{|x|^{1+2s}}\,dx \\
&= C_1\frac{\Gamma(1/2+s)}{2s\sqrt{\pi}|\Gamma(-s)|}R^{2-2s},
\end{align*}
and, on the other hand,
\begin{align*}
h^2\sum_{\substack {|hm|< R\\ m\neq 0}}|m|^2K^h_s(m)
&\ge \frac{h^2}{h^{2s}}
\sum_{\substack {|m|\le R/h\\ m\neq 0}}
\frac{4^s\Gamma(1/2+s)}{2^{1+2s}\sqrt{\pi}|\Gamma(-s)|} \frac{|m|^2}{|m|^{1+2s}} \\
&\ge C_2\frac{\Gamma(1/2+s)}{2\sqrt{\pi}|\Gamma(-s)|}h^{2-2s}
\int_{|x|\le R/h}\frac{|x|^2}{|x|^{1+2s}}\,dx\\
&=C_2\frac{\Gamma(1/2+s)}{(2-2s)\sqrt{\pi}|\Gamma(-s)|}R^{2-2s}.
\end{align*}
Altogether,
\[
(-\Delta_h)^sw_{j}\ge \min\{C_1,C_2\}\frac{\Gamma(1/2+s)}{\sqrt{\pi}|\Gamma(-s)|}
\bigg[\frac{1}{2s}+\frac{1}{2-2s}\bigg]R^{2-2s}>MR^{2-2s},
\]
where $M>0$ is a constant independent of $R$,$s$ and $h$,
because $\frac{\Gamma(1/2+s)}{\sqrt{\pi}|\Gamma(-s)|}
\big(\frac{1}{2s}+\frac{1}{2-2s}\big)>\frac14$.
\end{proof}

\begin{lem}
\label{lem:maximum}
Let $v:\Z_h \to \R$ be a subsolution to
$(-\Delta_h)^sv\le 0$ in an interval $B^h\subset\Z_h$. Then
\[
\max_{B^h}v\le \sup_{\Z_h\setminus B^h}v.
\]
Similarly, if $v:\Z_h\to\R$ is a supersolution to $(-\Delta_h)^sv\ge 0$ in $B^h\subset\Z_h$ then
\[
\min_{B^h}v\ge \inf_{\Z_h\setminus B^h}v.
\]
\end{lem}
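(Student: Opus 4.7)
The plan is to argue by contradiction using the pointwise nonlocal formula \eqref{eq:puntualdiscreta} from Theorem~\ref{thm:basicProperties}, together with the strict positivity of the kernel $K^h_s$ guaranteed by the lower bound in \eqref{eq:frKernelEst}. Since we only need to handle the subsolution case (the supersolution case follows upon replacing $v$ by $-v$), and since the conclusion is trivial if $\sup_{\Z_h\setminus B^h}v=+\infty$, we may assume this supremum is finite. Moreover, the statement implicitly requires that $(-\Delta_h)^sv_j$ be defined pointwise on $B^h$; in practice this is ensured because $v\in\ell_s$ (for instance, $v$ bounded suffices), so the series in \eqref{eq:puntualdiscreta} converges absolutely.

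First, because $B^h$ is a finite discrete interval, the maximum $M:=\max_{B^h}v$ is attained at some point $hj_0\in B^h$. Suppose, toward a contradiction, that $M>\sup_{\Z_h\setminus B^h}v$. Using \eqref{eq:puntualdiscreta} at $j_0$, we write
\[
0\ \ge\ (-\Delta_h)^sv_{j_0}
\ =\ \sum_{\substack{m\in\Z\\ m\neq j_0}}\bigl(v_{j_0}-v_m\bigr)K^h_s(j_0-m).
\]
For indices $m$ with $hm\in B^h$, the maximality of $v_{j_0}$ gives $v_{j_0}-v_m\ge 0$; for indices $m$ with $hm\in\Z_h\setminus B^h$, our contradiction hypothesis yields $v_{j_0}-v_m\ge M-\sup_{\Z_h\setminus B^h}v>0$. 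Combined with $K^h_s(j_0-m)>0$ for every $m\neq j_0$ (by \eqref{eq:frKernelEst}), every term of the series is nonnegative and, since $\Z_h\setminus B^h$ is nonempty, at least one term is strictly positive. Therefore $(-\Delta_h)^sv_{j_0}>0$, contradicting the subsolution inequality.

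Hence $M\le \sup_{\Z_h\setminus B^h}v$, which is the first claim. For the supersolution claim, let $\tilde v=-v$. Then $(-\Delta_h)^s\tilde v\le 0$ in $B^h$, so applying the first part we get $\max_{B^h}\tilde v\le\sup_{\Z_h\setminus B^h}\tilde v$, i.e.\ $\min_{B^h}v\ge \inf_{\Z_h\setminus B^h}v$.

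The only delicate point is to observe that the nonlocality of $(-\Delta_h)^s$ is what forces the comparison to use \emph{all} values of $v$ on $\Z_h\setminus B^h$, rather than just boundary values as in the classical discrete Laplacian; this is exactly why the kernel positivity from \eqref{eq:frKernelEst} on the whole lattice is essential, and there are no other real obstacles once the pointwise formula is in hand.
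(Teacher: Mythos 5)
Your proof is correct and is essentially the same as the paper's: both argue by contradiction at a maximum point $hj_0\in B^h$, use the pointwise formula \eqref{eq:puntualdiscreta} together with the strict positivity of $K^h_s$ from \eqref{eq:frKernelEst} to produce a strictly positive term in the sum, and reduce the supersolution case to the subsolution case by replacing $v$ with $-v$. Your added remarks (finiteness of $B^h$, the trivial case $\sup_{\Z_h\setminus B^h}v=+\infty$, and the implicit $\ell_s$ requirement) are sensible but do not change the argument.
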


\begin{proof}
By considering $-v$ in place of $v$, it is enough to prove the result for subsolutions. We argue by contradiction.
Suppose that the maximum of $v$ in $B^h$, which is attained at a point $hj_0\in B^h$, is strictly larger than
$\sup_{\Z_h\setminus B^h}v$. Then there exists $hm_0\neq hj_0$ such that $v_{j_0}>v_{m_0}$. Hence,
by hypothesis and since $v_{j_0}-v_m\geq0$ for all $m\neq j_0$, we get
$$
0\geq(-\Delta_h)^sv_{j_0}=\sum_{m\neq j_0}(v_{j_0}-v_m)K^h_s(j_0-m)
\geq(v_{j_0}-v_{m_0})K^h_s(j_0-m_0)>0,
$$
which is a contradiction.
Then the maximum of $v$ in $B^h$ cannot be strictly larger than $\sup_{\Z_h\setminus B^h}v$.
\end{proof}

\begin{proof}[Proof of Theorem~\ref{thm:DiscretePoisson1d}]
Set
$$
v=u-M^{-1}R^{2s-2} \|(-\Delta_h)^su\|_{\ell_h^{\infty}(B_R^h)}w,
$$
where $w$ and $M$ are as in Lemma~\ref{lem:barrier}. Then, for any $hj\in B_R^h$,
\[
(-\Delta_h)^sv_{j}=(-\Delta_h)^su_{j}
-M^{-1}R^{2s-2}\|(-\Delta_h)^su\|_{\ell_h^{\infty}(B_R^h)}(-\Delta_h)^sw_j\le 0.
\]
Thus, by the maximum principle in Lemma~\ref{lem:maximum},
$\max_{B_R^h}v\le \sup_{\Z_h\setminus B_R^h}v$. On the other hand, since $w=0$ on $\Z_h\setminus B_R^h$, we have
\[
\sup_{\Z_h\setminus B_R^h}v= \sup_{\Z_h\setminus B_R^h}u.
\]
Thus, as $0\leq w\le 4R^2$ on $B_R^h$,
\begin{align*}
\max_{B_R^h}u &\le \sup_{\Z_h\setminus B_R^h}v
+ M^{-1}R^{2s-2}\|(-\Delta_h)^su\|_{\ell_h^{\infty}(B_R^h)} \max_{B_R^h}w \\
&\le \|u\|_{\ell_h^{\infty}(\Z_h\setminus B_R^h)}
+ 4M^{-1}R^{2s}\|(-\Delta_h)^su\|_{\ell_h^{\infty}(B_R^h)}\\
&=\|g\|_{\ell_h^\infty(\Z_h\setminus B_R^h)}+4M^{-1} R^{2s}
\|f\|_{\ell_h^\infty(B_R^h)}.
\end{align*}
Similarly, it can be proved that
$$
\min_{B_R^h}u \ge -\|u\|_{\ell_h^{\infty}(\Z_h\setminus B_R^h)}
- 4M^{-1}R^{2s}\|(-\Delta_h)^su\|_{\ell_h^{\infty} (B_R^h)}.
$$
The proof is complete.
\end{proof}

\section{Proof of Theorem~\ref{thm:Convergence-result}}
\label{sec:proofmain}

Let $v=r_hU-u$. Then
\begin{equation*}
\begin{cases}
(-\Delta_h)^sv=(-\Delta_h)^sr_hU-f, & \text{ in } B_R^h, \\
v=r_hU, & \text{ in } \Z_h\setminus B_R^h.
\end{cases}
\end{equation*}
By Theorem~\ref{thm:DiscretePoisson1d},
\[
\|r_hU-u\|_{\ell_h^\infty(B_R^h)}\le CR^{2s}\|(-\Delta_h)^{s}(r_hU)-f
\|_{\ell^\infty_h(B_R^h)} + \|r_hU\|_{\ell^\infty_h(\Z_h\setminus B_R^h)}.
\]
Since $f=r_h((-\Delta)^sU)$ and $U\in C^{0,\alpha+2s}$,
Theorem~\ref{thm:consistencia1d}$(i)$ implies that the first term above
is bounded by $CR^{2s}[U]_{C^{0,\alpha+2s}}h^{\alpha}$,
where $C$ is independent of $R$ and $h$.
For the second term, we clearly have
$\|r_hU\|_{\ell^\infty(\Z_h\setminus B_R^h)}\le \|U\|_{L^\infty(\R\setminus B_R)}$.
From the results in Theorem \ref{thm:continuousPoissonproblem}, we have
$$
[U]_{C^{0,\alpha+2s}}\leq C_{\alpha,s,R_0}\|F\|_{C^{0,\alpha}},
$$
see \eqref{eq:laregularidaddeU}.
Moreover, for any $x\in \R\setminus B_R$ with $R>2R_0$, we have (see \eqref{eq:laregularidaddeU})
\begin{equation}
\label{laestimaciondeloqueresta}
|U(x)|\le A_{-s}\int_{B_{R_0}}\frac{|F(y)|}{|x-y|^{1-2s}}\,dy
\le A_{-s}\frac{\|F\|_{L^\infty}}{R_0^{1-2s}}\le C\frac{\|F\|_{L^\infty}}{R^{1-2s}},
\end{equation}
where $C>0$ depends only on $s$.
Hence, with our choice of $R$,
\[
\|r_hU-u\|_{\ell^\infty(B_R^h)}
\le C_{\alpha,s,R_0}\|F\|_{C^{0,\alpha}}R^{2s}\big(h^{\alpha}+R^{-1}\big)
\leq C_{\alpha,s,R_0}\|F\|_{C^{0,\alpha}}R^{2s}h^{\alpha},
\]
where $C_{\alpha,s,R_0}>0$ is independent of $R$ and $h$.\qed

\section{Technical lemmas, Bessel functions, the continuous Poisson problem
and the fractional discrete Sobolev and Poincar\'e inequalities}
\label{sec:technical}

\subsection{Some technical lemmas}
\label{subsec:technical-lemmas}

Lemmas in this subsection are needed in the proof of Theorem~\ref{thm:consistencia1d}.
They are also useful to get estimates for the kernels of the fractional discrete Laplacian
in Theorem~\ref{thm:basicProperties} and for the
fractional integral kernel in Theorem~\ref{lem:kernelFractionalIntegral}.

\begin{lem}
\label{lem:Mario}
Let $\lambda>0$. Let $a,b$ be real numbers such that $0\le a<b<\infty$. Then
\[
\min\{\lambda,1\}\le\frac{b^{\lambda}-a^{\lambda}}{b^{\lambda-1}(b-a)}\le \max\{\lambda,1\}.
\]
\end{lem}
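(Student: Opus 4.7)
The plan is to reduce the two–sided inequality to a one–variable estimate via the scaling $t = a/b$. Writing $b^\lambda - a^\lambda = b^\lambda(1-t^\lambda)$ and $b^{\lambda-1}(b-a) = b^\lambda(1-t)$, the claim becomes
\[
\min\{\lambda,1\}\;\le\;g(t):=\frac{1-t^\lambda}{1-t}\;\le\;\max\{\lambda,1\},\qquad t\in[0,1).
\]
Note that the case $a=0$ corresponds to $t=0$, where $g(0)=1\in[\min\{\lambda,1\},\max\{\lambda,1\}]$, so from now on we may take $t\in(0,1)$. The endpoint behavior is the guide: $g(0)=1$ and $g(t)\to\lambda$ as $t\to 1^-$ (by L'Hôpital), and $g$ should interpolate monotonically between these two values, with the direction of monotonicity determined by whether $\lambda\ge 1$ or $\lambda\le 1$.

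Next I would split into the two ranges of $\lambda$ and handle each bound separately by elementary means, avoiding derivative computations of $g$ itself. For the ``easy'' bounds, use only the comparison between $t$ and $t^\lambda$ on $[0,1]$:
\begin{itemize}
\item If $\lambda\ge 1$, then $t^\lambda\le t$ on $[0,1]$, so $1-t^\lambda\ge 1-t$, i.e.\ $g(t)\ge 1=\min\{\lambda,1\}$.
\item If $\lambda\le 1$, then $t^\lambda\ge t$ on $[0,1]$, so $1-t^\lambda\le 1-t$, i.e.\ $g(t)\le 1=\max\{\lambda,1\}$.
\end{itemize}

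For the remaining two bounds, introduce the auxiliary function
\[
h(t):=1-\lambda+\lambda t-t^\lambda,\qquad t\in[0,1],
\]
noting that both missing inequalities amount to sign assertions on $h$ (the upper bound when $\lambda\ge 1$ is $h(t)\le 0$, and the lower bound when $\lambda\le 1$ is $h(t)\ge 0$). Since $h(1)=0$ and $h'(t)=\lambda(1-t^{\lambda-1})$, the sign of $h'$ is governed exactly by whether $\lambda\ge 1$ (then $t^{\lambda-1}\le 1$ on $[0,1]$, so $h'\ge 0$ and $h$ is nondecreasing, forcing $h\le h(1)=0$) or $\lambda\le 1$ (then $t^{\lambda-1}\ge 1$ on $(0,1]$, so $h'\le 0$ and $h$ is nonincreasing, forcing $h\ge h(1)=0$). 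This closes both remaining inequalities.

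The argument is entirely elementary and there is no real obstacle; the only care needed is bookkeeping, i.e.\ matching each of the four inequalities ($g\ge 1$, $g\le 1$, $g\le\lambda$, $g\ge\lambda$) with the correct range of $\lambda$, and remembering that the two ``nontrivial'' ones both follow from the sign of the single function $h$ together with $h(1)=0$.
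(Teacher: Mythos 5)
Your proof is correct and follows essentially the same route as the paper: both reduce to the one-variable inequality for $g(t)=\frac{1-t^\lambda}{1-t}$ via the substitution $t=a/b$, and both obtain the bound near $1$ from the elementary comparison of $t^\lambda$ with $t$ on $[0,1]$. The only cosmetic difference is in the bound near $\lambda$: the paper applies the mean value theorem directly to $t\mapsto t^\lambda$ on $[a/b,1]$, obtaining $g(t)=\lambda\xi^{\lambda-1}$ and then using $\xi<1$, whereas you introduce the auxiliary function $h(t)=1-\lambda+\lambda t - t^\lambda$, note $h(1)=0$, and read off the sign of $h$ from the sign of $h'(t)=\lambda(1-t^{\lambda-1})$; these are two packagings of the same one-derivative computation.
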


\begin{proof}
Let us first suppose that $\lambda\ge1$. Then, as $0\le a<b<\infty$, we have
\[
0\le a/b<1\Rightarrow 0\le(a/b)^{\lambda}
\le a/b<1\Rightarrow \frac{b^{\lambda}-a^{\lambda}}{b^{\lambda-1}(b-a)}=\frac{1-(a/b)^{\lambda}}{1-a/b}\ge1.
\]
On the other hand, by applying the mean value theorem to the function $t\mapsto t^{\lambda}$, we get
\[
\frac{b^{\lambda}-a^{\lambda}}{b^{\lambda-1}(b-a)}=\frac{1-(a/b)^{\lambda}}{1-a/b}=\lambda x^{\lambda-1}\le \lambda,
\]
for certain $x\in (a/b,1)$. In the case $0<\lambda<1$, the proof is analogous.
\end{proof}

\begin{lem}
\label{eq:lowerBound}
Let $0<s<1$, $t\in \R$, and $m\in \Z$, $m\neq 0$.
\begin{enumerate}[$(a)$]
\item We have
\begin{equation}
\label{eq:lowerLap}
\bigg|\frac{\Gamma(|m|-s)}{\Gamma(|m|+1+s)}-\frac{1}{|m|^{1+2s}}\bigg|\le \frac{C_{s}}{|m|^{2+2s}}.
\end{equation}
In the case when $0<s<1/2$, we have
\begin{equation}
\label{eq:lowerFrac}
\bigg|\frac{\Gamma(|m|+s)}{\Gamma(|m|+1-s)}-\frac{1}{|m|^{1-2s}}\bigg|\le \frac{C_{s}}{|m|^{2-2s}}.
\end{equation}
The constants $C_s>0$ above depend only on $s$.
\item We have the lower bounds
\[
\frac{\Gamma(|m|-s)}{\Gamma(|m|+1+s)}\ge\frac{1}{(2|m|)^{1+2s}},
\]
and, for $0<s<1/2$,
\[
\frac{\Gamma(|m|+s)}{\Gamma(|m|+1-s)}\ge\frac{1}{(2|m|)^{1-2s}}.
\]

\end{enumerate}
\end{lem}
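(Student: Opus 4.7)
\textit{Proof plan.} For part (a), observe that the two ratios to be estimated have the common form $\Gamma(|m|+a)/\Gamma(|m|+b)$ with $a+b=1$: explicitly $(a,b)=(-s,1+s)$ for \eqref{eq:lowerLap} and $(a,b)=(s,1-s)$ for \eqref{eq:lowerFrac}. The classical asymptotic expansion
$$\frac{\Gamma(x+a)}{\Gamma(x+b)} = x^{a-b}\biggl[1+\frac{(a-b)(a+b-1)}{2x}+O_{a,b}(x^{-2})\biggr], \qquad x\to\infty,$$
has a vanishing first-order correction in both cases (because $a+b-1=0$), and it yields in fact $\Gamma(|m|+a)/\Gamma(|m|+b)-|m|^{a-b}=O_s(|m|^{a-b-2})$, which is even stronger than the claimed $O_s(|m|^{a-b-1})$. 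To obtain bounds uniformly in the integers $|m|\ge 1$ with constants depending only on $s$, one inserts the Tricomi--Erd\'elyi identity \eqref{eq:tricomi}, performs the substitution $u=|m|v$, and Taylor-expands the resulting integrand in powers of $u/|m|$; the leading term reproduces $|m|^{a-b}$ via a Gamma integral, and the remainder is dominated by an $s$-dependent integrable envelope (for instance a multiple of $u^{b-a-1}e^{-u/2}$) that is independent of $|m|\ge 1$.

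For part (b), it is equivalent to prove the upper bounds $\Gamma(|m|+b)/\Gamma(|m|+a)\le(2|m|)^{b-a}$, with $b-a=1+2s$ or $1-2s$ respectively. By log-convexity of $\Gamma$,
$$\log\frac{\Gamma(|m|+b)}{\Gamma(|m|+a)}=\int_{|m|+a}^{|m|+b}\psi(t)\,dt,$$
where $\psi=\Gamma'/\Gamma$. Since $\psi''(t)=-2\sum_{n\ge 0}(t+n)^{-3}<0$ on $(0,\infty)$, the digamma function is strictly concave there, and the Hermite--Hadamard inequality gives
$$\int_{|m|+a}^{|m|+b}\psi(t)\,dt\le (b-a)\,\psi\!\left(|m|+\tfrac{a+b}{2}\right)=(b-a)\,\psi\!\left(|m|+\tfrac{1}{2}\right),$$
where the last equality uses $a+b=1$. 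Combining with the classical inequality $\psi(t)\le\log t$ for $t>0$ and the elementary observation $|m|+1/2\le 2|m|$ valid for all integers $|m|\ge 1$, we conclude
$$\log\frac{\Gamma(|m|+b)}{\Gamma(|m|+a)}\le(b-a)\log(2|m|),$$
which is the desired bound.

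The main obstacle is the case $|m|=1$ with $s$ close to $1$ in the first inequality of (b): the naive endpoint estimate $\psi(|m|+b)\le\log(|m|+b)$ obtained from monotonicity of $\psi$ only yields $(|m|+b)^{b-a}$, which can exceed $(2|m|)^{b-a}$. The improvement to the midpoint $|m|+1/2$ is exactly what is needed, and it relies crucially on the symmetry $a+b=1$ built into both Gamma ratios together with the concavity of $\psi$ via Hermite--Hadamard. In part (a), the delicate issue is rather the uniformity of the implicit constant in $|m|$, which is most cleanly handled through the Tricomi--Erd\'elyi integral representation and a dominated convergence argument as outlined above.
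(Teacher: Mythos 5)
Your part (a) rests on the same key ingredient as the paper, namely the Tricomi--Erd\'elyi integral representation \eqref{eq:tricomi}, but you propose a different way to extract the estimate: a substitution $u=|m|v$ followed by a Taylor expansion of the integrand with a dominated-convergence type envelope. The paper instead proceeds by two applications of the elementary comparison Lemma~\ref{lem:Mario}, first replacing $|m|^{-1-2s}$ by the intermediate quantity $(|m|-s)^{-1-2s}$ (which has the exact Gamma-integral representation $\frac{1}{\Gamma(1+2s)}\int_0^\infty e^{-(|m|-s)v}v^{2s}\,dv$), and then bounding the resulting integral of $|v^{2s}-(1-e^{-v})^{2s}|$ by a multiple of $v^{1+2s}$. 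Both routes work. Your observation that the symmetry $a+b=1$ makes the next-order Stirling correction vanish, so that the true error is $O(|m|^{-3-2s})$, is correct, but the paper's intermediate approximation deliberately gives away one power of $|m|$ in exchange for a short, fully explicit argument; only $O(|m|^{-2-2s})$ is needed downstream.

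For part (b) your route is genuinely different from the paper's. The paper stays inside the Gamma-integral framework, using the pointwise inequality $1-e^{-v}\ge v e^{-v}$ in \eqref{eq:tricomi} to obtain $\frac{\Gamma(|m|-s)}{\Gamma(|m|+1+s)}\ge(|m|+s)^{-1-2s}\ge(2|m|)^{-1-2s}$ (and analogously for the second bound). You instead take logarithms, write $\log\frac{\Gamma(|m|+b)}{\Gamma(|m|+a)}=\int_{|m|+a}^{|m|+b}\psi(t)\,dt$, and exploit the strict concavity of the digamma $\psi$ on $(0,\infty)$ via the Hermite--Hadamard inequality, finishing with $\psi(t)\le\log t$ and $|m|+\tfrac12\le 2|m|$. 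This is correct, and your diagnosis of why the midpoint refinement is essential is accurate: at $|m|=1$ with $s$ near $1$ the crude bound $\psi(|m|+b)\le\log(|m|+b)$ only yields $(|m|+b)^{b-a}$, which exceeds $(2|m|)^{b-a}$, so the cancellation coming from $a+b=1$ (moving the evaluation point to $|m|+\tfrac12$) is exactly what saves the estimate. Both proofs are valid; the paper's is shorter and purely elementary, while yours is conceptually cleaner, makes transparent the role of the symmetry $a+b=1$, and extends with no change to any pair with $a+b\le 1$.
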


\begin{proof}
Without loss of generality, $m>0$. We begin with the proof of~\eqref{eq:lowerLap} in $(a)$.  We write
\begin{align*}
\bigg|\frac{\Gamma(m-s)}{\Gamma(m+1+s)}-\frac{1}{m^{1+2s}}\bigg|
&\le\bigg|\frac{\Gamma(m-s)}{\Gamma(m+1+s)}-\frac{1}{(m-s)^{1+2s}}\bigg| +\bigg|\frac{1}{(m-s)^{1+2s}}-\frac{1}{m^{1+2s}}\bigg|.
\end{align*}
The second term can be easily estimated, just by applying Lemma~\ref{lem:Mario}
with $\lambda=1+2s$, $a=\frac{1}{m}$ and $b=\frac{1}{m-s}$, namely,
\[
\bigg|\frac{1}{(m-s)^{1+2s}}-\frac{1}{m^{1+2s}}\bigg|
\sim\frac{1}{(m-s)^{2s}}\bigg(\frac{1}{m-s}-\frac{1}{m}\bigg)\sim\frac{C_{s}}{m^{2+2s}},
\]
where the symbol $\sim$ means that constants depend only on~$s$.
Now we study the first term.
By recalling \eqref{eq:tricomi}, for $k\in \N$, we have
$$
\frac{\Gamma(k-s)}{\Gamma(k+n+s)}
= \frac{1}{\Gamma(n+2s)}\int_0^{\infty}e^{-(k-s)v}(1-e^{-v})^{n+2s-1}\,dv.
$$
With this,
\begin{align*}
\Gamma(1+2s)\bigg|\frac{\Gamma(m-s)}{\Gamma(m+1+s)}-\frac{1}{(m-s)^{1+2s}}\bigg|
&\le \int_0^{\infty}e^{-(m-s)v}\big|v^{2s}-(1-e^{-v})^{2s}\big|\,dv\\
&= \int_0^{\infty}e^{-(m-s)v}v^{2s}\bigg|1-\Big(\frac{1-e^{-v}}{v}\Big)^{2s}\bigg|\,dv\\
&\sim \int_0^{\infty}e^{-(m-s)v}v^{2s}\bigg|1-\frac{1-e^{-v}}{v}\bigg|\,dv\\\
&\le \frac12\int_0^{\infty}e^{-(m-s)v}v^{2s+1}\,dv\sim\frac{\Gamma(1+2s)}{2}\frac{1}{m^{2+2s}},
\end{align*}
where we applied Lemma~\ref{lem:Mario},
and in the last inequality we used that $\frac{v^2}{2}>v-1+e^{-v}$ for $v\in (0,\infty)$.
The proof of \eqref{eq:lowerFrac} is analogous, with the restriction $0<s<1/2$ coming from Lemma~\ref{lem:Mario}.

The proof of the first estimate in $(b)$
follows from \eqref{eq:tricomi} and an application of the Mean Value Theorem, namely,
\begin{align*}
\frac{1}{\Gamma(1+2s)}\int_0^{\infty}e^{-(m-s)v}(1-e^{-v})^{2s}\,dv&\ge \frac{1}{\Gamma(1+2s)}\int_0^{\infty}e^{-(m-s)v}e^{-2sv}v^{2s}\,dv\\
&=\frac{1}{(m+s)^{1+2s}}\ge\frac{1}{2^{1+2s}m^{1+2s}}.
\end{align*}
In a similar way we can get the second bound in $(b)$ after choosing
$z=m$, $\alpha=s\in(0,1/2)$ and $\beta=1-s$ in \eqref{eq:tricomi}.
\end{proof}

\subsection{Properties of Bessel functions $I_k$}
\label{sec:preliminaries}

We collect in this subsection some properties of modified Bessel functions.
Let $I_k$ be the modified Bessel function of the first kind and order $k\in \Z$, defined as
\begin{equation}
\label{eq:Ik}
I_k(t) = i^{-k} J_k(it) = \sum_{m=0}^{\infty} \frac{1}{m!\,\Gamma(m+k+1)} \left(\frac{t}{2}\right)^{2m+k}.
\end{equation}
Since $k$ is an integer and $1/\Gamma(n)$ is taken to be equal zero if $n=0,-1,-2,\ldots$, the function $I_k$ is defined in the whole real line.
It is verified that
\begin{equation}
\label{eq:negPos}
I_{-k}(t) = I_k(t),
\end{equation}
for each $k \in \Z$. Besides, from~\eqref{eq:Ik} it is clear that
$I_0(0) = 1$ and $I_k(0) = 0$ for $k \ne 0$.
Also,
\begin{equation}
\label{eq:Ik>0}
I_k(t) \ge 0
\end{equation}
for every $k \in \Z$ and $t\ge0$, and
\begin{equation}
\label{eq:sumIk}
\sum_{k \in \Z} e^{-2t} I_k(2t) = 1.
\end{equation}
On the other hand, there exist constants $C,c>0$ such that
$$
c t^k\le I_k(t)\le C t^k, \quad \text{ as } t\to0^+.
$$
In fact,
\begin{equation}
\label{eq:asymptotics-zero-ctes}
I_k(t)\sim \bigg(\frac{t}{2}\bigg)^k\frac{1}{\Gamma(k+1)}, \quad \text{ for a fixed } 
k\neq -1,-2,-3,\ldots~\text{and}~t\to0^+,
\end{equation}
see \cite{OlMax}.
It is well known (see \cite{Lebedev}) that
\begin{equation}
\label{eq:asymptotics-infinite}
I_k(t)=C e^t t^{-1/2}+ R_k(t),
\end{equation}
where
\[
|R_k(t)|\le C_k e^tt^{-3/2}, \quad \text{ as } t\to\infty.
\]
We also have (see \cite{OlMax}) that, as $\nu\to \infty$,
\begin{equation}
\label{eq:asymptotics-order-large}
I_{\nu}(z)\sim \frac{1}{\sqrt{2\pi \nu}}\bigg(\frac{e z}{2 \nu}\bigg)^{\nu}\sim \frac{z^\nu}{2^{\nu} \nu!}.
\end{equation}
For the following formula see \cite[p.~305]{Prudnikov2}. For $\Re c>0$, $-\Re \nu<\Re \alpha<1/2$,
\begin{equation}
\label{eq:rusos}
\int_0^{\infty} e^{-ct} I_{\nu}(ct) t^{\alpha-1}\,dt
= \frac{(2c)^{-\alpha}}{\sqrt{\pi}} \frac{\Gamma(1/2-\alpha)\Gamma(\alpha+\nu)
}{\Gamma(\nu+1-\alpha)}.
\end{equation}

\subsection{The fractional discrete Sobolev and Poincar\'e inequalities}
\label{subsection:Sobolev}

The aim of this subsection is to prove Theorem \ref{thm:SobolevandPoincare}.
The idea is to follow the clever elementary proof of the fractional Sobolev inequality
presented by O. Savin and E. Valdinoci in \cite{Savin-Valdinoci-1},
which makes use of their Sobolev embedding for sets proved in \cite{Savin-Valdinoci-2}.
We sketch the main steps.

Along this subsection we let
$$
0<s<1/2,\quad\text{so that}\quad 2/(1-2s)>2.
$$
We start by working on the integers $\Z$, then we will see how to get
the result for the mesh $\Z_h$, for any $h>0$. If $R>0$ we denote the discrete interval
$$
I_R:=\{n\in\Z:-R<n<R\}.
$$
In this way, if $R$ is an integer then
the measure of the interval above is
\begin{equation}
\label{medida}
\#I_R=2R-1,
\end{equation}
where we denote by $\#E$ the number of integers points in the set $E\subset\Z$
(counting measure).
As in \cite{Savin-Valdinoci-1}, the following discrete Sobolev embedding for sets
(which in fact is valid for any $s>0$) is crucial in the proof.

\begin{lem}[{Discrete analogue of \cite[Lemma~A.1]{Savin-Valdinoci-2}}]\label{lem:1}
Let $k\in\Z$ be fixed. Let $E\subset\Z$ be a nonempty finite set. There exists a constant $C_s>0$
depending only on $s$ such that
$$
\sum_{l\notin E}\frac{1}{|k-l|^{1+2s}}\geq C_s(\#E)^{-2s}.
$$
\end{lem}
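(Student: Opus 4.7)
The plan is to mimic the continuous rearrangement strategy from \cite{Savin-Valdinoci-2}: the sum $\sum_{l\notin E}|k-l|^{-1-2s}$ is smallest when $E$ consists of the $N:=\#E$ lattice points closest to $k$, and in that minimizing configuration a direct integral comparison yields the desired lower bound of order $N^{-2s}$.

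First, I would dispose of the trivial case $k\notin E$: then the term $l=k$ appears in the sum with weight $|k-k|^{-1-2s}=+\infty$, so the inequality holds for free. Thus we may assume $k\in E$. Next, I would run a discrete rearrangement argument. Define $f(l)=|k-l|^{-1-2s}$ for $l\neq k$ and $f(k)=+\infty$. If $E$ is any set of size $N$ with $k\in E$ that does not consist of the $N$ lattice points on which $f$ is largest (namely $k$ and its $N-1$ nearest neighbors), then there exist $l\in E$ and $l'\notin E$ with $f(l)<f(l')$, i.e., $|k-l|>|k-l'|$. Replacing $E$ by $E'=(E\setminus\{l\})\cup\{l'\}$ changes the sum by $f(l)-f(l')<0$, so the sum strictly decreases. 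Iterating finitely many such swaps, we conclude
$$
\sum_{l\notin E}\frac{1}{|k-l|^{1+2s}}\geq \sum_{l\notin E_{\min}}\frac{1}{|k-l|^{1+2s}},
$$
where $E_{\min}$ consists of $k$ together with the $N-1$ integers closest to $k$.

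Finally, I would estimate the right-hand side explicitly. Set $M:=\lfloor (N-1)/2\rfloor$, so that $E_{\min}\supseteq\{k-M,\dots,k+M\}$ and $\Z\setminus E_{\min}$ contains all integers at distance at least $M+1$ from $k$ on at least one side. Then
$$
\sum_{l\notin E_{\min}}\frac{1}{|k-l|^{1+2s}}\geq \sum_{j\geq M+1}\frac{1}{j^{1+2s}}\geq \int_{M+1}^{\infty}\frac{dx}{x^{1+2s}}=\frac{1}{2s(M+1)^{2s}}.
$$
Since $M+1\leq N$ for every $N\geq 1$, the right-hand side is at least $\frac{1}{2s}N^{-2s}$, yielding the claim with $C_s=1/(2s)$.

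The argument is essentially elementary, so the only real subtlety is checking that the rearrangement/swap step terminates and covers the borderline cases (especially $N$ even, where there is a choice of which of $k\pm M$ to include in $E_{\min}$); but this is harmless because in every case $\Z\setminus E_{\min}$ still contains the half-line tail $\{k+j:j\geq M+1\}$, which is all that is used in the integral comparison.
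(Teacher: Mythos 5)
Your proposal is correct and runs along essentially the same lines as the paper's argument: reduce to $k\in E$, replace $E$ by an interval centered at $k$, and finish with a tail estimate by an integral comparison. The one place the two proofs genuinely differ is the middle step. You argue by a discrete rearrangement (swapping a "far" element of $E$ for a "nearer" one outside it, which strictly decreases the sum), arriving at the exact minimizing configuration $E_{\min}$ of $N=\#E$ points nearest to $k$. The paper instead compares directly against the larger symmetric interval $I_R=\{n:-R<n<R\}$ with $\#I_R=2R-1\geq R=\#E$: the counting inequality $\#(E^c\cap I_R)\geq\#(E\cap I_R^c)$ lets it trade the "far but in $E$" terms for "near but not in $E$" terms at a uniform weight of $R^{-1-2s}$ without ever invoking a swap procedure. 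The payoff of the paper's version is that it avoids any termination concerns entirely, whereas your swap step — while correct — needs a one-line justification (e.g., $\sum_{l\in E}|l-k|$ drops by at least $1$ per swap, so it stops; or better, skip the iteration and compare $E$ with $E_{\min}$ directly via $\sum_{l\notin E}f-\sum_{l\notin E_{\min}}f=\sum_{E_{\min}\setminus E}f-\sum_{E\setminus E_{\min}}f\geq0$, since $|E_{\min}\setminus E|=|E\setminus E_{\min}|$ and every element of $E_{\min}$ has $f$-value at least that of every element outside it). Your choice of $E_{\min}$ gives a slightly tighter comparison set than the paper's $I_R$, but both yield the same $N^{-2s}$ lower bound, so the difference is cosmetic.
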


\begin{proof}
We assume that $k\in E$, otherwise the conclusion is trivially true
as the left hand side of the inequality is infinite.
By replacing $E$ by $E-k$ we can suppose that $k=0\in E$.
Let $R=\#E$. Then $R$ is a positive integer. If we show that
\begin{equation}
\label{eq:toshow}
\sum_{l\notin E}\frac{1}{|l|^{1+2s}}\geq
\sum_{l\notin I_R}\frac{1}{|l|^{1+2s}},
\end{equation}
then the conclusion follows. Indeed, we can bound
$$
\sum_{l\notin I_R}\frac{1}{|l|^{1+2s}}
\geq \int_R^\infty\frac{1}{x^{1+2s}}\,dx=C_s(\#E)^{-2s}.
$$
For \eqref{eq:toshow}, we first observe that we can use \eqref{medida} to get
\begin{align*}
\#\big(E^c\cap I_R\big) &= \#I_R-\#\big(E\cap I_R\big) \\
&= (2R-1)-\#\big(E\cap I_R\big) \\
&\geq\#E-\#\big(E\cap I_R\big)=\#\big(E\cap I_R^c\big).
\end{align*}
We estimate now by using this last inequality as follows:
\begin{align*}
\sum_{l\notin E}\frac{1}{|l|^{1+2s}} &=
\sum_{l\notin E,l\in I_R}\frac{1}{|l|^{1+2s}}+\sum_{l\notin E,l\notin I_R}\frac{1}{|l|^{1+2s}} \\
&\geq \sum_{l\notin E,l\in I_R}\frac{1}{R^{1+2s}}+
\sum_{l\notin E,l\notin I_R}\frac{1}{|l|^{1+2s}} \\
&= \frac{\#\big(E^c\cap I_R\big)}{R^{1+2s}}+\sum_{l\notin E,l\notin I_R}\frac{1}{|l|^{1+2s}} \\
&\geq \frac{\#\big(E\cap I_R^c\big)}{R^{1+2s}}+
\sum_{l\notin E,l\notin I_R}\frac{1}{|l|^{1+2s}} \\
&\geq \sum_{l\in E,l\notin I_R}\frac{1}{|l|^{1+2s}}
+\sum_{l\notin E,l\notin I_R}\frac{1}{|l|^{1+2s}}= \sum_{l\notin I_R}\frac{1}{|l|^{1+2s}},
\end{align*}
and \eqref{eq:toshow} is proved.
\end{proof}

\begin{lem}[{See \cite[Lemma~5]{Savin-Valdinoci-1} with $T=2^2>1$ and $n=1$}]\label{lem:2}
Let $N\in\Z$ and let $a_j$ be a bounded, nonnegative, decreasing sequence with $a_j=0$
for all $j\geq N$. There is a constant $C_s>0$ depending only on $s$ such that
$$
\sum_{j\in\Z}2^{2k}a_j^{1-2s}\leq C_s\sum_{j\in\Z,a_j\neq0}2^{2j}a_{j+1}a_j^{-2s}.
$$
\end{lem}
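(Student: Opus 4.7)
The plan is to follow the approach of Savin and Valdinoci \cite[Lemma~5]{Savin-Valdinoci-1}, specialized to $T=4=2^2$ and $n=1$. Two analytic ingredients drive the argument: first, the function $t\mapsto t^{1-2s}$ is subadditive on $[0,\infty)$ (since $0<1-2s<1$); second, the geometric weight $4^j$ creates slack after index shifts $j\mapsto j+1$, each of which costs the factor $\tfrac14$ and therefore permits absorbing a fixed fraction of the left-hand side.

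First I would telescope. From $a_j=\sum_{k\ge j}(a_k-a_{k+1})$ (a finite sum, since $a_j=0$ for $j\ge N$) and iterated subadditivity,
\[
a_j^{1-2s}\le\sum_{k\ge j}(a_k-a_{k+1})^{1-2s}.
\]
Multiplying by $4^j$, summing over $j\in\Z$, and interchanging the orders of summation gives
\[
\sum_{j\in\Z}4^j a_j^{1-2s}\le\frac{4}{3}\sum_{k\in\Z}4^k(a_k-a_{k+1})^{1-2s}.
\]
The problem thereby reduces to bounding $\sum_k 4^k(a_k-a_{k+1})^{1-2s}$ by a constant times $\sum_j 4^j a_{j+1}a_j^{-2s}$.

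Next I would split $\Z$ into the \emph{slow} set $S=\{k:a_{k+1}\ge a_k/2\}$ and the \emph{fast} set $F=\Z\setminus S$. On $k\in S$, the bounds $a_k-a_{k+1}\le a_{k+1}$ together with $a_{k+1}^{-2s}\le 2^{2s}a_k^{-2s}$ yield
\[
(a_k-a_{k+1})^{1-2s}\le a_{k+1}^{1-2s}\le 2^{2s}\,a_{k+1}\,a_k^{-2s},
\]
which is already of the desired form. On $k\in F$ one has instead $(a_k-a_{k+1})^{1-2s}\sim a_k^{1-2s}$, and the pointwise comparison with $a_{k+1}a_k^{-2s}$ breaks down because $a_{k+1}<a_k/2$. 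To absorb an \emph{isolated} fast drop, one exploits that the pre-drop index $k-1$, when slow, contributes $4^{k-1}a_k a_{k-1}^{-2s}\ge 2^{-2s}\cdot 4^{k-1}a_k^{1-2s}$ to the right-hand side, which majorizes $4^k(a_k-a_{k+1})^{1-2s}\le 4^k a_k^{1-2s}$ up to the universal factor $4\cdot 2^{2s}$.

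The main obstacle is controlling \emph{chains} of consecutive fast drops, where the pre-drop index is itself in $F$ and the naive comparison fails. This is the crux of the Savin--Valdinoci argument: the fast contributions are redistributed across scales using the geometric weight $4^j$, whose strictly summable tail $\sum_{m\ge 0}4^{-m}<\infty$ encodes the finitely many cascade steps within $\supp a$ and yields a single universal loss. Carrying out this redistribution as in \cite[Lemma~5]{Savin-Valdinoci-1} delivers a constant $C_s$ depending only on $s$, which blows up like $(1-2s)^{-1}$ as $s\to\tfrac12^-$ and is thus harmless in the subsequent application to the discrete Sobolev embedding.
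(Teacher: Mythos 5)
Note first that the paper does not actually prove this lemma: it is stated as a direct instance of Lemma~5 of \cite{Savin-Valdinoci-1} (with $T=2^2$, $n=1$, and exponent $p=1-2s$) and simply cited. So strictly speaking there is no in-paper proof to compare against.

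Your sketch has the right shape but a genuine gap exactly at the step you flag as ``the crux.'' The telescoping step is fine: from $a_j=\sum_{k\ge j}(a_k-a_{k+1})$, subadditivity of $t\mapsto t^{1-2s}$, and $\sum_{j\le k}4^j=\tfrac{4}{3}\,4^k$, you correctly reduce to
\[
\sum_{k}4^k(a_k-a_{k+1})^{1-2s}\le C\sum_{k:\,a_k\ne0}4^k a_{k+1}a_k^{-2s},
\]
and the slow case ($a_{k+1}\ge a_k/2$) and the isolated fast drop (charging to the slow pre-drop index $k-1$) are both handled correctly. But the chain-of-consecutive-fast-drops case is never actually carried out: you close it by ``carrying out this redistribution as in \cite{Savin-Valdinoci-1},'' which is circular, since that is the very lemma you are proving. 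The heuristic you offer for this case is also inaccurate. In a fast run $k_0,\ldots,k_0+m$ with (say) constant drop ratio $\eta<1/2$, the run's left-hand partial sum and its right-hand partial sum are both geometric in $j$ with the \emph{same} ratio $4\eta^{1-2s}$; the quotient of the two is bounded either because the run's own right-hand contribution dominates (when $4\eta^{1-2s}>1$) or because the $j=0$ term, charged to the pre-drop slow index, dominates (when $4\eta^{1-2s}\le 1$). There is no ``summable tail $\sum_{m\ge0}4^{-m}$'' in this step --- that factor already appeared in the telescoping --- and the run series $\sum_m(4\eta^{1-2s})^m$ may well diverge. To complete the argument you must make this dichotomy explicit, handle non-constant drop ratios inside a run, and note (using the boundedness of $a$) that every maximal fast run has a slow predecessor to charge to. Finally, the asserted blow-up rate $C_s\sim(1-2s)^{-1}$ as $s\to\tfrac12^-$ is not justified by anything in the sketch.
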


\begin{lem}[{Discrete analogue of \cite[Lemma~6]{Savin-Valdinoci-1}}]
\label{lem:3}
Let $f:\Z\to\R$ be a sequence with compact support. Define
\begin{equation}
\label{eq:ak}
a_j:=\#\{k\in\Z:|f_k|>2^j\}.
\end{equation}
Then there is a constant $C_s>0$ depending only on $s$ such that
$$
\sum_{j\in\Z,a_j\neq0}2^{2j}a_{j+1}a_j^{-2s} \leq
C_s\sum_{j\in\Z}\sum_{m\in\Z,m\neq j}\frac{|f_j-f_m|^2}{|j-m|^{1+2s}}.
$$
\end{lem}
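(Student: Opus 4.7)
The plan is to prove Lemma~\ref{lem:3} through a dyadic decomposition of $f$ itself, combined with the Sobolev embedding for sets just established in Lemma~\ref{lem:1}. Since replacing $f$ by $|f|$ preserves the level-set counts $a_j$ and only decreases the right-hand side (because $||f_j|-|f_m|| \leq |f_j - f_m|$), I may assume $f \geq 0$. Denote $A_k := \{j \in \Z : f_j > 2^k\}$, so that $\#A_k = a_k$.

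Next, introduce the dyadic truncations
\[
v^{(k)}_j := \min\{(f_j - 2^k)_+, 2^k\}, \qquad k \in \Z.
\]
Then $v^{(k)}_j = 2^k$ on $A_{k+1}$, $v^{(k)}_j = 0$ outside $A_k$, $v^{(k)}_j \in (0,2^k]$ on $A_k \setminus A_{k+1}$, and $\sum_{k \in \Z} v^{(k)}_j = f_j$ by telescoping. Because each $v^{(k)}$ is a nondecreasing, Lipschitz-$1$ function of $f_j$, all differences $v^{(k)}_j - v^{(k)}_m$ share the common sign of $f_j - f_m$; hence
\[
|f_j - f_m|^2 = \bigg(\sum_{k \in \Z}(v^{(k)}_j - v^{(k)}_m)\bigg)^2 \geq \sum_{k \in \Z}(v^{(k)}_j - v^{(k)}_m)^2,
\]
using that $(\sum a_k)^2 \geq \sum a_k^2$ whenever the $a_k$ are of a common sign. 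Dividing by $|j-m|^{1+2s}$ and summing in $(j,m)$ reduces the task to bounding each slice seminorm from below.

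For a fixed $k$, restrict the sum to the \emph{gap pairs} $j \in A_{k+1}$, $m \notin A_k$ (plus their transpositions), on which $(v^{(k)}_j - v^{(k)}_m)^2 = 2^{2k}$. Lemma~\ref{lem:1} with $E = A_k$ then yields, for each $j \in A_{k+1} \subset A_k$,
\[
\sum_{m \notin A_k} \frac{1}{|j-m|^{1+2s}} \geq C_s\, a_k^{-2s}.
\]
Summing over $j \in A_{k+1}$ produces the factor $a_{k+1}$, so each slice seminorm is bounded below by $2\,C_s\,2^{2k} a_{k+1} a_k^{-2s}$. Combining with the previous paragraph and summing in $k$ gives the desired inequality, with constant depending only on $s$.

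The main obstacle is the sign-coherence inequality $(f_j - f_m)^2 \geq \sum_k (v^{(k)}_j - v^{(k)}_m)^2$: it is this monotonicity of the dyadic truncations that packages the level-set information into essentially nonoverlapping contributions. Without such a device, the naive strategy of bounding $\sum_{j \in A_{k+1}, m \notin A_k} |f_j - f_m|^2 / |j-m|^{1+2s} \geq C_s 2^{2k} a_{k+1} a_k^{-2s}$ via the gap estimate $|f_j - f_m|\geq 2^k$ and then summing directly over $k$ would weight each pair $(j,m)$ by roughly $\log_2(f_j/f_m)$, a quantity that is unbounded and even infinite whenever $f_m = 0$. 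The pointwise decomposition $f = \sum_k v^{(k)}$ circumvents this overcounting entirely.
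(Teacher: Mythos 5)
Your proof is correct, and it takes a genuinely different---and arguably cleaner---route than the paper's. The paper follows the Savin--Valdinoci argument: it decomposes $\Z$ into the level-set annuli $D_j = A_j\setminus A_{j+1}$ with cardinalities $d_j$, uses the separation estimate $|f_k-f_l|\geq 2^{j-1}$ for $k\in D_j$, $l\in D_m$, $m\leq j-2$ together with Lemma~\ref{lem:1}, and then runs an absorption step through the auxiliary convergent series $S=\sum_j 2^{2j}a_{j-1}^{-2s}d_j$ in order to convert the naturally arising annulus counts $d_j$ into the desired $a_{j+1}$, via the identity $d_j=a_j-\sum_{\ell\geq j+1}d_\ell$. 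Your argument replaces all of that bookkeeping with a pointwise dyadic truncation: writing $f=\sum_k v^{(k)}$ with $v^{(k)}_j=\min\{(f_j-2^k)_+,2^k\}$ and invoking the sign-coherence inequality $(f_j-f_m)^2\geq\sum_k(v^{(k)}_j-v^{(k)}_m)^2$, legitimate because every $v^{(k)}$ is a nondecreasing function of the scalar value $f_j$, disjointifies the dyadic contributions at the outset. Each level $k$ is then handled independently: on the gap pairs $j\in A_{k+1}$, $m\notin A_k$ the $k$-th truncation jumps by exactly $2^k$, Lemma~\ref{lem:1} with $E=A_k$ bounds the inner sum over $m$ from below by $C_s a_k^{-2s}$, and summing over $j\in A_{k+1}$ produces $a_{k+1}$ directly, with no absorption needed. (The extra factor of $2$ you pick up from the transposed pairs is harmless but also unnecessary.) Both routes give the stated inequality with $C_s$ depending only on~$s$; yours is shorter and conceptually more transparent, while the paper's stays closer to the cited reference.
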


Using Lemmas \ref{lem:2} and \ref{lem:3} we can prove the following result.

\begin{thm}[Discrete analogue of the fractional
Sobolev inequality on $\Z$]
There is a constant $C_s>0$ depending only on $s$ such that
for any sequence $f:\Z\to\R$ with compact support,
\begin{equation}
\label{eq:discreteSobolev}
\bigg(\sum_{j\in\Z}|f_j|^{2/(1-2s)}\bigg)^{(1-2s)/2}\leq
C_s\bigg(\sum_{j\in\Z}\sum_{m\in\Z,m\neq j}\frac{|f_j-f_m|^2}{|j-m|^{1+2s}}\bigg)^{1/2}.
\end{equation}
\end{thm}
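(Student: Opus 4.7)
The plan is to prove \eqref{eq:discreteSobolev} by combining a layer-cake decomposition of the left-hand side with Lemmas \ref{lem:2} and \ref{lem:3}, following the Savin--Valdinoci scheme \cite{Savin-Valdinoci-1}. Set $p:=2/(1-2s)$, which exceeds $2$ since $0<s<1/2$, and to a compactly supported $f:\Z\to\R$ attach the level counts $a_j:=\#\{k\in\Z:|f_k|>2^j\}$ from Lemma \ref{lem:3}. Because $f$ is bounded with finite support, $(a_j)_{j\in\Z}$ is nonnegative, decreasing, bounded above by $\#\supp(f)$, and vanishes once $j\geq\lceil\log_2\|f\|_{\ell^\infty}\rceil$, so the hypotheses of Lemma \ref{lem:2} are met.

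The first step is the dyadic upper bound
$$\sum_{k\in\Z}|f_k|^{p}\leq C_p\sum_{j\in\Z}2^{jp}a_j,$$
proved by grouping each index $k$ according to the unique dyadic interval $[2^j,2^{j+1})$ containing $|f_k|$ (so that $|f_k|^{p}\leq 2^{(j+1)p}$) and then summing by parts via the identity $a_j-a_{j+1}=\#\{k:|f_k|\in[2^j,2^{j+1})\}$.

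The crucial observation is that the right-hand side above can be recast as $\sum_j y_j^{1/(1-2s)}$ with $y_j:=2^{2j}a_j^{1-2s}$. I would then invoke the elementary inequality $\sum_j y_j^{q}\leq\bigl(\sum_j y_j\bigr)^{q}$, valid for any nonnegative sequence and any $q\geq 1$ (since $y_j\leq\sum_i y_i$ forces $y_j^{q}\leq\bigl(\sum_i y_i\bigr)^{q-1}y_j$), with the choice $q=1/(1-2s)>1$, to conclude
$$\sum_{j\in\Z}2^{jp}a_j\leq\bigg(\sum_{j\in\Z}2^{2j}a_j^{1-2s}\bigg)^{1/(1-2s)}.$$
Now Lemmas \ref{lem:2} and \ref{lem:3} in succession dominate the inner sum first by $C_s\sum_{j:a_j\neq 0}2^{2j}a_{j+1}a_j^{-2s}$ and then by $C_s\,[f]^{2}$, where $[f]^{2}$ abbreviates the double sum $\sum_j\sum_{m\neq j}|f_j-f_m|^2/|j-m|^{1+2s}$.

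Chaining these estimates gives $\sum_k|f_k|^{p}\leq C_s[f]^{2/(1-2s)}$, and raising both sides to the power $(1-2s)/2=1/p$ yields \eqref{eq:discreteSobolev}. All the hard analytic work has already been absorbed into Lemmas \ref{lem:2} and \ref{lem:3}; the main conceptual point remaining is the $\ell^{q}$-embedding step, which is precisely the mechanism that matches the sharp Sobolev exponent $p=2/(1-2s)$ on the left-hand side with the $L^{2}$-type seminorm on the right.
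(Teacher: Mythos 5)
Your proposal is correct and follows essentially the same route as the paper: the dyadic decomposition via the level counts $a_j$, the $\ell^1\hookrightarrow\ell^{1/(1-2s)}$ embedding (which is just a repackaging of the paper's subadditivity of $t\mapsto t^{1-2s}$), and then Lemmas~\ref{lem:2} and~\ref{lem:3}. The only cosmetic difference is that the paper bounds $\#(A_j\setminus A_{j+1})\le a_j$ directly rather than summing by parts, but the chain of estimates and the role of each lemma are identical.
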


\begin{proof}
It is easy to see that the right hand side of \eqref{eq:discreteSobolev} is finite
(even for the more general case $f\in\ell^2$, see \eqref{acotacionL2h}, Lemma \ref{lemmaquefalta}
and \eqref{eq:frKernelEst}).
For any $j\in\Z$ we let
$A_j=\{k\in\Z:|f_k|>2^j\}$.
Notice that
\begin{equation}
\label{Ak}
A_j\supset A_{j+1},\quad\text{and}\quad\bigcup_{j\in\Z}A_j=\Z.
\end{equation}
Let $a_j=\#A_j$ as in \eqref{eq:ak}. We can write
\begin{align*}
\sum_{j\in\Z}|f_j|^{2/(1-2s)} &=\sum_{j\in\Z}\sum_{k\in A_j\setminus A_{j+1}}|f_k|^{2/(1-2s)} \\
&\leq \sum_{j\in\Z}(2^{j+1})^{2/(1-2s)}\#\big(A_j\setminus A_{j+1}\big) \\
&\leq \sum_{j\in\Z}2^{2(j+1)/(1-2s)}a_j.
\end{align*}
As $1-2s<1$, the function $\varphi(t)=t^{1-2s}$, $t\geq0$, is concave in $[0,\infty)$ and
satisfies $\varphi(0)=0$. Hence $\varphi$ is subadditive.
Using this and the estimate we just performed above, we get
\begin{equation}
\label{cuenta1}
\bigg(\sum_{j\in\Z}|f_j|^{2/(1-2s)}\bigg)^{1-2s}\leq 4\sum_{j\in\Z}2^{2j}a_j^{1-2s}.
\end{equation}
Next we verify that the sequence $a_j$ satisfies the hypotheses of Lemma \ref{lem:2}.
As $f$ has compact support, we have $A_j\subseteq\supp(f)$, for all $j\in\Z$. Then
$0\leq a_j\leq\#\supp(f)$,
so the sequence $a_j$ is uniformly bounded in $j\in\Z$ and each $a_j$ is nonnegative.
Using \eqref{Ak} it follows that $a_j$ is decreasing.
Finally, as $f$, being a compactly supported sequence of real numbers, is bounded,
there is an $N>0$ such that $|f_k|<2^N$ for all $k\in\Z$. Hence $A_j$ is empty for all $j\geq N$,
which gives that $a_j=0$ for all $j\geq N$. Thus we can apply Lemma \ref{lem:2} to the sequence $a_j$.
From \eqref{cuenta1}, by Lemma \ref{lem:2} and Lemma \ref{lem:3}, we clearly see that
\eqref{eq:discreteSobolev} follows.
\end{proof}

We are left to prove Lemma \ref{lem:3}, for which we follow \cite{Savin-Valdinoci-1}.

\begin{proof}[Proof of Lemma \ref{lem:3}]
As $||f_j|-|f_m||\leq|f_j-f_m|$, we can assume, by possibly replacing $f$ by $|f|$, that $f\geq0$.
For any $j\in\Z$, let us define
$$
D_j=A_j\setminus A_{j+1}=\{m\in\Z:2^j<f_m\leq2^{j+1}\},\quad\text{and}\quad d_j=\#D_j.
$$
As $f$ is bounded with compact support, both $a_j$ and $d_j$ become zero for $j$ large enough.
Define the convergent series
$$
S=\sum_{j\in\Z,a_{j-1}\neq0}2^{2j}a_{j-1}^{-2s}d_j.
$$
With this notation we have (see the computation for \cite[eq.~(32)]{Savin-Valdinoci-1}):
$$
\sum_{j\in\Z,a_{j-1}\neq0}\sum_{\ell\in\Z,\ell\geq j+1}2^{2j}a_{j-1}^{-2s}d_\ell\leq S.
$$
Let $j\in\Z$ and $k\in D_j$. Take any $m\leq j-2$ and any $l\in D_m$. Then
$$
|f_k-f_l|\geq 2^j-2^{m+1}\geq 2^j-2^{j-1}=2^{j-1},
$$
from which, by Lemma \ref{lem:1} and the facts that $\cup_{m\leq j-2}D_m=A_{j-1}^c$ (disjoint union) and
$a_{j-1}=\#A_{j-1}$, we deduce
\begin{align*}
\sum_{m\in\Z,m\leq j-2}\sum_{l\in D_m}\frac{|f_k-f_l|^2}{|k-l|^{1+2s}}
&\geq 2^{2(j-1)}\sum_{m\in\Z,m\leq j-2}\sum_{l\in D_m}
\frac{1}{|k-l|^{1+2s}} \\
&= 2^{2(j-1)}\sum_{l\notin A_{j-1}}\frac{1}{|k-l|^{1+2s}} \\
&\geq C_s2^{2j}a_{j-1}^{-2s}.
\end{align*}
Now we sum the inequality above among all $l\in D_m$ and use that
$$
d_j=a_j-\sum_{\ell\geq j+1}d_\ell
$$
to get that, for every $j\in\Z$,
\begin{align}
\sum_{m\in\Z,m\leq j-2}\sum_{l\in D_m}\sum_{k\in D_j}\frac{|f_k-f_l|^2}{|k-l|^{1+2s}}
&\geq C_02^{2j}a_{j-1}^{-2s}d_j \label{medio}\\
&= C_0\bigg[2^{2j}a_{j-1}^{-2s}a_j-\sum_{\ell\in\Z,\ell\geq j+1}2^{2j}a_{j-1}^{-2s}d_\ell\bigg].\nonumber
\end{align}
We sum for all $j\in\Z$ such that $a_{j-1}\neq0$ in inequality \eqref{medio} to get
$$
\sum_{j\in\Z,a_{j-1}\neq0} \sum_{m\in\Z,m\leq j-2}\sum_{l\in D_m}\sum_{k\in D_j}\frac{|f_k-f_l|^2}{|k-l|^{1+2s}}
\geq C_0\sum_{j\in\Z,a_{j-1}\neq0}2^{2j}a_{j-1}^{-2s}d_j=C_0S.
$$
Therefore, as in \cite[p.~2685]{Savin-Valdinoci-1}, we reach the analogue of
\cite[eq.~(36)]{Savin-Valdinoci-1}:
\begin{equation}
\label{cuenta}
2\sum_{j\in\Z,a_{j-1}\neq0}\sum_{m\in\Z,m\leq j-2}\sum_{l\in D_m}\sum_{k\in D_j}\frac{|f_k-f_l|^2}{|k-l|^{1+2s}}
\geq C_0\sum_{j\in\Z,a_{j-1}\neq0}2^{2j}a_{j-1}^{-2s}a_j.
\end{equation}
Finally, by symmetry, we can write
\begin{align*}
\sum_{j\in\Z}\sum_{m\in\Z,m\neq j}\frac{|f_j-f_m|^2}{|j-m|^{1+2s}} &= 2\sum_{j,m\in\Z,m<j}\frac{|f_j-f_m|^2}{|j-m|^{1+2s}} \\
&=2\sum_{j,m\in\Z,m<j}\sum_{k\in D_j}\sum_{l\in D_m}\frac{|f_k-f_l|^2}{|k-l|^{1+2s}} \\
&\geq 2\sum_{j\in\Z,a_{j-1}\neq0}\sum_{m\in\Z,m\leq j-2}\sum_{k\in D_j}\sum_{l\in D_m}\frac{|f_k-f_l|^2}{|k-l|^{1+2s}},
\end{align*}
and from \eqref{cuenta}, the conclusion of Lemma \ref{lem:3} follows with $C_s=1/C_0$.
\end{proof}

\begin{cor}[Discrete analogue of Sobolev inequality on $\Z_h$]\label{cor:h}
Let $u:\Z_h\to\R$ be a function with compact support. Then there is a constant $C_s>0$
depending only on $s$ such that
\begin{equation}
\label{eq:bla}
\|u\|_{\ell^{2/(1-2s)}_h}\leq C_s\bigg(h^2\sum_{j\in\Z}\sum_{m\in\Z,m\neq j}
\frac{|u_j-u_m|^2}{|hj-hm|^{1+2s}}\bigg)^{1/2}.
\end{equation}
\end{cor}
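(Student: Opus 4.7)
The plan is to reduce inequality \eqref{eq:bla} on the rescaled mesh $\Z_h$ to the inequality \eqref{eq:discreteSobolev} already proved on $\Z$ by a direct scaling argument. Given $u:\Z_h\to\R$ with compact support, I define $f:\Z\to\R$ simply by $f_j:=u_j=u(hj)$. Since $u$ has finite support on $\Z_h$, so does $f$ on $\Z$, and hence \eqref{eq:discreteSobolev} applies to $f$.

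Next I rewrite both sides of \eqref{eq:bla} in terms of $f$ and the counting variables on $\Z$. Recalling the definition of the norm $\|\cdot\|_{\ell_h^{2/(1-2s)}}$ in \eqref{Lph}, the left-hand side equals
\[
\|u\|_{\ell_h^{2/(1-2s)}}=h^{(1-2s)/2}\bigg(\sum_{j\in\Z}|f_j|^{2/(1-2s)}\bigg)^{(1-2s)/2}.
\]
For the right-hand side, I use $|hj-hm|=h|j-m|$ to compute
\[
h^2\sum_{j\in\Z}\sum_{m\neq j}\frac{|u_j-u_m|^2}{|hj-hm|^{1+2s}}
=h^{1-2s}\sum_{j\in\Z}\sum_{m\neq j}\frac{|f_j-f_m|^2}{|j-m|^{1+2s}},
\]
so that taking square roots produces the same prefactor $h^{(1-2s)/2}$ on the right.

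Therefore \eqref{eq:bla} is equivalent to
\[
\bigg(\sum_{j\in\Z}|f_j|^{2/(1-2s)}\bigg)^{(1-2s)/2}\le C_s\bigg(\sum_{j\in\Z}\sum_{m\neq j}\frac{|f_j-f_m|^2}{|j-m|^{1+2s}}\bigg)^{1/2},
\]
which is exactly inequality \eqref{eq:discreteSobolev} with the constant independent of $h$. There is no genuine obstacle here: all the analytic work sits in the proof of \eqref{eq:discreteSobolev} (Lemmas \ref{lem:1}--\ref{lem:3}); the passage to general $h>0$ is a one-line dimensional-analysis check that the two powers of $h$ match. The only care needed is bookkeeping the exponents so that the scaling factor cancels correctly, which follows from the fact that the fractional seminorm of order $s$ scales with weight $h^{(1-2s)/2}$ — the same weight as the $L^{2/(1-2s)}$-norm on the mesh.
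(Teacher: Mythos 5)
Your proof is correct and follows exactly the same scaling argument as the paper: define $f_j = u(hj)$ on $\Z$, apply the already-established inequality \eqref{eq:discreteSobolev}, and observe that the factor $h^{(1-2s)/2}$ appears on both sides of \eqref{eq:bla} after the change of variables $|hj-hm|=h|j-m|$. No substantive difference from the paper's proof.
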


\begin{proof}
Given the function $u$ on $\Z_h$ we can define a new function (sequence) $f$
on $\Z_1=\Z$ through $f_j:=u(hj)=u_j$, for $j\in\Z$.
With this notation, for any $1\leq p\leq\infty$, $\|u\|_{\ell^p_h}=h^{1/p}\|f\|_{\ell^p}$.
Then $f$ is a sequence with compact support,
so we can apply \eqref{eq:discreteSobolev} to it and get
$$
\bigg(\sum_{j\in\Z}|u_j|^{2/(1-2s)}\bigg)^{(1-2s)/2} \leq
C_s\bigg(\sum_{j\in\Z}\sum_{m\in\Z,m\neq j} \frac{|u_j-u_m|^2}{|j-m|^{1+2s}}\bigg)^{1/2}.
$$
Now we multiply both sides by $h^{(1-2s)/2}$ and \eqref{eq:bla} follows.
\end{proof}

\begin{rem}
Observe that the factor $h^2$ appearing in the right hand side of \eqref{eq:bla} is the correct
one since that expression is nothing but the $\ell^2(\Z_h\times\Z_h)$ norm of the two-variables function
$v=v(hj,hm):\Z_h\times\Z_h\setminus\{(hj,hm):j=m\}\to\R$ given by
$$
v(hj,hm)=\frac{|u_j-u_m|^2}{|hj-hm|^{1+2s}}.
$$
\end{rem}

\begin{proof}[Proof of Theorem \ref{thm:SobolevandPoincare}]
By applying \eqref{eq:frKernelEst} and Corollary \ref{cor:h},
\begin{align*}
\frac{h}{2}\sum_{j\in\Z}\sum_{m\in\Z,m\neq j}|u_j-u_m|^2K^h_s(j-m)
&\geq C_sh^2\sum_{j\in\Z}\sum_{m\in\Z,m\neq j}\frac{|u_j-u_m|^2}{|hj-hm|^{1+2s}} \\
&\geq C_s\|u\|_{\ell^{2/(1-2s)}_h}^2.
\end{align*}
and the Sobolev inequality follows.
To prove the Poincar\'e inequality,
notice that, by H\"older's inequality \eqref{Holder} with $p=1/(1-2s)>1$ and $p'=1/(2s)$ ,
\begin{align*}
\|u\|_{\ell^2_h}^2 &=h\sum_{hj\in\supp(u)}|u_j|^2=\|\chi_{\supp(u)}\cdot u^2\|_{\ell^1_h} \\
&\leq \|\chi_{\supp(u)}\|_{\ell^{1/(2s)}_h}\|u^2\|_{\ell^{1/(1-2s)}_h} \\
&= h^{2s}\big(\#_h\supp(u)\big)^{2s}\bigg(h\sum_{j\in\Z}|u_j|^{2/(1-2s)}\bigg)^{1-2s} \\
&= h^{2s}\big(\#_h\supp(u)\big)^{2s}\|u\|_{\ell^{2/(1-2s)}_h}^2.
\end{align*}
Then we apply the Sobolev inequality.
\end{proof}

\subsection{The continuous Poisson problem}
\label{Subsection:Poissonproblem}

In this subsection we prove the following result, which we believe it belongs to the folklore.
We present here a more or less self contained proof.
From now on we denote the Fourier transform of $G\in L^1(\R)$ by
$$
\widehat{G}(\xi)=\frac{1}{(2\pi)^{1/2}}\int_{\R}G(x)e^{-i\xi x}\,dx,\quad\xi\in\R.
$$

\begin{thm}
\label{thm:continuousPoissonproblem}
Let $F$ be a function in $C^{0,\alpha}$, for some $0<\alpha<1$,
having compact support on~$\R$. Take $0<s<1$ such that $0<\alpha+2s<1$.
Then the function $U$ defined by
$$
U(x):=(-\Delta)^{-s}F(x)=A_{-s}\int_{\supp(F)}\frac{F(y)}{|x-y|^{1-2s}}\,dy,
$$
for $x\in\R$, where $\supp(F)$ denotes the support of $F$ and $A_{-s}>0$ is as in \eqref{constantfractionalintegral},
is the unique classical solution to the fractional Poisson problem \eqref{eq:PoissonProblem}
such that $|U(x)|\to0$ as $|x|\to\infty$. Moreover, $U\in C^{0,\alpha+2s}$ and there is a constant
$C>0$ depending only on $s$, $\alpha$ and the measure of $\supp(F)$, such that
\begin{equation}
\label{eq:laregularidaddeU}
\|U\|_{C^{0,\alpha+2s}}\leq C\|F\|_{C^{0,\alpha}}.
\end{equation}
\end{thm}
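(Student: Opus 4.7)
Since $\supp(F)\subseteq B_{R_0}$ and the kernel $|x-y|^{-(1-2s)}$ is locally integrable in $\R$ (note that $0<1-2s<1$ because $\alpha+2s<1$), the integral defining $U(x)$ is absolutely convergent for every $x\in\R$. For $|x|\geq 2R_0$ one has $|x-y|\geq|x|/2$ on $\supp(F)$, yielding $|U(x)|\leq C\|F\|_{L^\infty}R_0/|x|^{1-2s}\to 0$ as $|x|\to\infty$, which also provides the decay estimate used in \eqref{laestimaciondeloqueresta}. For the bound \eqref{eq:laregularidaddeU}, I perform the classical Riesz-potential H\"older estimate: for $x\neq x'$ with $h=x'-x$ and $\delta=|h|$, write $U(x)-U(x')=A_{-s}\int[K(z)-K(z-h)]F(x-z)\,dz$ with $K(z)=|z|^{-(1-2s)}$ and split the integration into $|z|\leq 2\delta$ and $|z|>2\delta$. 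In the near zone, use the H\"older regularity $|F(x-z)-F(x)|\leq[F]_{C^{0,\alpha}}|z|^\alpha$ together with $\int_{|z|\leq 2\delta}|z|^{-(1-2s)}\,dz\leq C\delta^{2s}$ to produce $C[F]_{C^{0,\alpha}}\delta^{\alpha+2s}$. In the far zone, the mean value theorem gives $|K(z)-K(z-h)|\leq C\delta|z|^{-(2-2s)}$; inserting the decomposition $F(x-z)=F(x)+[F(x-z)-F(x)]$ and exploiting the compactness of $\supp(F)$ to absorb the constant-term contribution yields again $C\|F\|_{C^{0,\alpha}}\delta^{\alpha+2s}$.

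\textbf{Verification that $(-\Delta)^sU=F$.} Since $U\in C^{0,\alpha+2s}$ with $\alpha+2s>2s$ and $U$ vanishes at infinity, the principal-value integral \eqref{eq:principio} for $(-\Delta)^sU(x)$ converges absolutely for every $x\in\R$. To establish the identity, I approximate $F$ by smooth compactly supported $F_\varepsilon=\phi_\varepsilon*F$ and set $U_\varepsilon=A_{-s}|\cdot|^{-(1-2s)}*F_\varepsilon$. For such rapidly decreasing data, the tempered distribution identity $\mathcal{F}[|x|^{-(1-2s)}](\xi)=C_s|\xi|^{-2s}$ (valid for $0<s<1/2$) together with $\mathcal{F}[(-\Delta)^sG](\xi)=|\xi|^{2s}\widehat{G}(\xi)$ yields $(-\Delta)^sU_\varepsilon=F_\varepsilon$ classically. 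Since $F\in C^{0,\alpha}$, $F_\varepsilon\to F$ uniformly on $\R$ with $\{F_\varepsilon\}$ bounded in $C^{0,\alpha}$ and eventually supported in $B_{R_0+1}$; applying the estimate of the previous step to $F-F_\varepsilon$ shows $U_\varepsilon\to U$ uniformly on $\R$ and $\{U_\varepsilon\}$ is bounded in $C^{0,\alpha+2s}$. Dominated convergence in \eqref{eq:principio} (the uniform H\"older control dominates the integrand near the diagonal, and the uniform $L^\infty$ bound controls the tail) then gives $(-\Delta)^sU_\varepsilon(x)\to(-\Delta)^sU(x)$ pointwise, so the equation $(-\Delta)^sU=F$ holds classically.

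\textbf{Uniqueness.} If $V$ is another classical solution with $V(x)\to 0$ as $|x|\to\infty$, then $W:=U-V$ is bounded, vanishes at infinity, and satisfies $(-\Delta)^sW=0$ in $\R$. The Liouville theorem for the one-dimensional fractional Laplacian (any bounded $s$-harmonic function on $\R$ is constant) forces $W$ to be constant, and the decay then gives $W\equiv 0$.

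\textbf{Main obstacle.} The most delicate step is the sharp H\"older estimate of the first paragraph at the endpoint exponent $\alpha+2s$: extracting the full gain $\delta^{\alpha+2s}$ rather than the naive $\delta^{2s}$ relies on carefully exploiting both the H\"older regularity of $F$ and the compactness of $\supp(F)$ to cancel the contribution of $F(x)$ across the near and far regions. Once this estimate is in hand, the mollification argument and Liouville step are standard.
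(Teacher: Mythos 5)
Your overall architecture mirrors the paper's — (1) well-definedness, boundedness, and decay of $U$; (2) H\"older regularity; (3) verification that $(-\Delta)^sU=F$; (4) uniqueness via Liouville — but the key middle step is done by a genuinely different method. For the verification that $(-\Delta)^sU=F$, the paper establishes the identity first in the sense of tempered distributions (pairing $U$ against test functions in $\mathcal{S}_s$ via a Plancherel-type computation), then upgrades to a pointwise identity using the symmetry of the bilinear form $\int U\,(-\Delta)^s\psi = \int \psi\,(-\Delta)^sU$, justified by the H\"older estimate on $U$. You instead mollify the data ($F_\varepsilon=\phi_\varepsilon * F$), prove the identity classically for smooth compactly supported $F_\varepsilon$, and pass to the limit using that $\{U_\varepsilon\}$ is uniformly bounded in $C^{0,\alpha+2s}$ (hence dominated convergence applies in the singular integral $(-\Delta)^sU_\varepsilon(x)$). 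Both routes work; the paper's is self-contained at the distributional level, yours reduces cleanly to the smooth case and then leans harder on the quantitative $C^{0,\alpha+2s}$ estimate, which you already proved. Either is a reasonable choice.

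One point in your H\"older estimate is stated imprecisely and would need tightening. You write the difference $U(x)-U(x')$ as $\int[K(z)-K(z-h)]F(x-z)\,dz$, insert $F(x-z)=F(x)+[F(x-z)-F(x)]$, and claim the constant-term contribution $F(x)\int[K(z)-K(z-h)]\,dz$ is "absorbed by the compactness of $\supp(F)$." Compactness plays no role here: $F(x)$ is a fixed constant, not compactly supported in $z$, and the naive bound on the constant term over $|z|>2\delta$ alone yields only $O(|F(x)|\,\delta^{2s})$, which is too weak. What actually kills this term is the translation-invariance cancellation $\lim_{R\to\infty}\int_{|z|<R}[K(z)-K(z-h)]\,dz=0$, valid because the boundary contribution at $|z|=R$ vanishes (the kernel decays). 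The paper does exactly this via the auxiliary function $N_{s,r}(x)=\int_{-r}^r|x-y|^{-1+2s}\,dy$, whose derivative $(x+r)^{2s-1}-(r-x)^{2s-1}$ visibly tends to zero as $r\to\infty$. Your far-zone estimate of the remaining term $\int_{|z|>2\delta}|K(z)-K(z-h)|\,|F(x-z)-F(x)|\,dz$ is correct and actually uses only the global $C^{0,\alpha}$ bound on $F$ (compactness is irrelevant here too, since $\alpha+2s-2<-1$ ensures convergence at infinity). So the estimate is right, but the justification of the cancellation of the constant term needs to be stated as a limiting argument rather than attributed to compact support.

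One minor gap in the mollification step: you assert that for Schwartz $F_\varepsilon$ the identity $(-\Delta)^sU_\varepsilon=F_\varepsilon$ holds "classically." Since $U_\varepsilon$ itself is not Schwartz (it decays only like $|x|^{-(1-2s)}$), this needs a word of justification — e.g.\ observe that $\widehat{U_\varepsilon}(\xi)=C_s|\xi|^{-2s}\widehat{F_\varepsilon}(\xi)$ lies in $L^1$ with all polynomial weights, so $U_\varepsilon\in C^\infty$ and bounded, and for such functions the principal-value formula agrees with the Fourier-multiplier definition. The paper circumvents this by working directly in the dual space $\mathcal{S}_s$ rather than approximating.

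Apart from these two matters of precision, your argument reaches the same conclusion by a legitimate alternative path, and the uniqueness step (Liouville theorem for bounded $s$-harmonic functions) is identical to the paper's.
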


\begin{proof}
Without loss of generality we may assume
that $F=0$ outside an interval $(-R_0,R_0)$, for some $R_0>0$. Then we can write
$$
U(x)=A_{-s}\int_{-R_0}^{R_0}\frac{F(y)}{|x-y|^{1-2s}}\,dy.
$$
It is clear that $U$ is well defined because $|y|^{-1+2s}$ is a locally integrable function in $\R$
and $F$ is bounded.
Next we prove a series of properties about $U$ that will complete the proof.

\begin{enumerate}[$(1)$]

\item \textbf{$U$ is bounded on $\R$ and $|U(x)|\to0$ as $|x|\to\infty$.}
To see this we first do a computation. Let $r>0$.
It is easy to see that the positive function
$$
N_{s,r}(x):=\int_{-r}^r|x-y|^{-1+2s}\,dy,\qquad x\in\R,
$$
is H\"older continuous of order $0<2s<1$ on $\R$
and smooth in $\R\setminus\{-r,r\}$. In particular,
\begin{equation}
\label{N}
\frac{d}{dx}N_{s,r}(x)=(x+r)^{2s-1}-(r-x)^{2s-1},\qquad\text{for any } -r<x<r.
\end{equation}
Now, if $|y|<r$ and $|x|>2r$ then $|x-y|^{-1+2s}\leq
(|x|-r)^{-1+2s}$. Hence,
$$
|N_{s,r}(x)|\leq\frac{2r}{(|x|-r)^{1-2s}},\qquad\text{for any } |x|>2r,
$$
which shows that $|N_{s,r}(x)|\to 0$ as $|x|\to\infty$. Thus $N_{s,r}\in C^{0,2s}$.
For our claim (1) we just observe that
\begin{equation}
\label{acotaciondeUporF}
|U(x)|\leq A_{-s}\|F\|_{L^\infty}N_{s,R_0}(x),\qquad\text{for any } x\in\R.
\end{equation}

\item \textbf{Let $\varphi\in C^\infty(\R)$ such that $(1+|x|^{1+2s})D^k\varphi\in L^\infty(\R)$,
for all $k\geq0$ (that is, $\varphi\in\mathcal{S}_s$, see the notation in \cite[p.~73]{Silvestre-CPAM}). Then}
\begin{equation}
\label{eq:identity}
\int_{\R}U(x)\varphi(x)\,dx=\int_{\R}|\xi|^{-2s}\widehat{F}(\xi)\widehat{\varphi}(\xi)\,d\xi.
\end{equation}
It is easy to check that both integrals in \eqref{eq:identity} are absolutely convergent.
We start by proving the following identity:
$$
A_{-s}\int_{\R}\frac{\varphi(y)}{|y|^{1-2s}}\,dy =
\frac{1}{(2\pi)^{1/2}}\int_{\R}|\xi|^{-2s}\overline{\widehat{\varphi}(\xi)}\,d\xi,
$$
where $\overline{z}$ denotes the complex conjugate of $z\in\C$.
It is readily seen that both integrals above are absolutely convergent.
By Plancherel's identity and Fubini's Theorem,
\begin{align*}
\frac{1}{(2\pi)^{1/2}}\int_{\R}|\xi|^{-2s}\overline{\widehat{\varphi}(\xi)}\,d\xi &=
\frac{1}{\Gamma(s)(2\pi)^{1/2}} \int_0^\infty\int_{\R}e^{-t|\xi|^2} 
\overline{\widehat{\varphi}(\xi)}\,d\xi\,\frac{dt}{t^{1-s}} \\
&= \frac{1}{\Gamma(s)} \int_0^\infty \int_{\R} \frac{e^{-|y|^2/(4t)}}{(4\pi t)^{1/2}}
\varphi(y)\,dy\,\frac{dt}{t^{1-s}} \\
&= \int_{\R}\bigg[\frac{1}{\Gamma(s)}\int_0^\infty
\frac{e^{-|y|^2/(4t)}}{(4\pi t)^{1/2}}\,\frac{dt}{t^{1-s}}\bigg]\varphi(y)\,dy \\
&= A_{-s}\int_{\R}\frac{\varphi(y)}{|y|^{1-2s}}\,dy.
\end{align*}
Next, for any fixed $x\in\R$, by the properties of the Fourier transform,
$$
A_{-s}\int_{\R}\frac{\varphi(x-y)}{|y|^{1-2s}}\,dy = \int_{\R}|\xi|^{-2s}
\overline{\widehat{\varphi}(-\xi)}e^{ix\xi}\,d\xi.
$$
By multiplying both sides above by $F(x)$ and integrating in $dx$ we get
$$
A_{-s}\int_{\R}\int_{\R}\frac{F(x)\varphi(x-y)}{|y|^{1-2s}}\,dx\,dy
=\int_{\R}\int_{\R}|\xi|^{-2s}\overline{\widehat{\varphi}(-\xi)}F(x)e^{ix\xi}\,dx\,d\xi,
$$
which gives
$$
\int_{\R}U(x)\varphi(x)\,dx = A_{-s}\int_{\R}\varphi(x) \int_{\R}\frac{F(x-y)}{|y|^{1-2s}}\,dy\,dx
=\int_{\R}|\xi|^{-2s}\widehat{F}(-\xi)\overline{\widehat{\varphi}(-\xi)}\,d\xi.
$$

\item \textbf{We have $(-\Delta)^sU=F$ in the sense of distributions in $\mathcal{S}_s'$.} As $U$ is
bounded, we have that $U\in L_s(\R)$ (see \cite{Silvestre-CPAM} for the notation), namely,
$$
\int_{\R}\frac{|U(x)|}{1+|x|^{1+2s}}<\infty.
$$
Then $(-\Delta)^sU$ can be defined in the distributional sense:
for any function $\psi$ in the Schwartz class $\mathcal{S}$, we have
$\langle (-\Delta)^sU,\psi\rangle:=\langle U,(-\Delta)^s\psi\rangle$, see \cite[p.~73]{Silvestre-CPAM} for details.
The fractional Laplacian of $\psi\in\mathcal{S}$ is defined with the Fourier transform as
$$
\widehat{(-\Delta)^s\psi}(\xi)=|\xi|^{2s}\widehat{\psi}(\xi).
$$
Using the semigroup language and the Fourier transform as in \cite{Stinga, Stinga-Torrea} we get
$$
(-\Delta)^s\psi(x)=A_s\int_{\R}\frac{\psi(x)-\psi(y)}{|x-y|^{1+2s}}\,dy,
$$
where $A_s$ is as in~\eqref{eq:constante1d}.
We have that $\varphi:=(-\Delta)^s\psi\in\mathcal{S}_s$, namely,
$\varphi$ is a $C^\infty$ function such that $(1+|x|^{1+2s})D^k\varphi$
is bounded, for any $k\geq0$. The latter is claimed in \cite[p.~73]{Silvestre-CPAM}, but we
show it here for $k=0$
(the proof for $k\geq1$ is the same as the derivatives and the fractional Laplacian commute)
because we will need the computation at a later stage. Let us see that
\begin{equation}
\label{eq:cuenta}
\int_{\R}\frac{|\psi(x)-\psi(y)|}{|x-y|^{1+2s}}\,dy\leq C_{s,\psi}\frac{1}{|x|^{1+2s}},
\qquad\text{for all } |x|>1.
\end{equation}
Let $|x|>1$ and $y\in\R$. Suppose that $|x-y|<|x|/2$.
Then, for any intermediate point $\xi$ between $x$ and $y$, we have
$|x|\leq |x-\xi|+|\xi|\leq |x-y|+|\xi|\leq |x|/2+|\xi|$,
which gives $|x|\leq 2|\xi|$. As a consequence, by the Mean Value Theorem and using that $\psi\in\mathcal{S}$,
\begin{align*}
|\psi(x)-\psi(y)| &=(1+|\xi|)^3|\psi'(\xi)|\frac{|x-y|}{(1+|\xi|)^3} \\
&\leq C_\psi\frac{|x-y|}{(1+|x|)^3}\leq C_\psi\frac{|x-y|}{|x|^3}.
\end{align*}
From here,
\begin{align*}
\int_{|x-y|<|x|/2}\frac{|\psi(x)-\psi(y)|}{|x-y|^{1+2s}}\,dy &\leq
\frac{C_{\psi}}{|x|^3}\int_{|x-y|<|x|/2}\frac{|x-y|}{|x-y|^{1+2s}}\,dy \\
&=\frac{C_{s,\psi}}{|x|^{2+2s}}\leq\frac{C_{s,\psi}}{|x|^{1+2s}}.
\end{align*}
On the other hand,
\begin{align*}
\int_{|x-y|\geq|x|/2}\frac{|\psi(x)-\psi(y)|}{|x-y|^{1+2s}}\,dy
&\leq C_s\bigg(\frac{|\psi(x)|}{|x|^{2s}}+\frac{\|\psi\|_{L^1(\R)}}{|x|^{1+2s}}\bigg) \\
&= \frac{C_s}{|x|^{1+2s}}\big(|x\psi(x)|+\|\psi\|_{L^1(\R)}\big)\leq \frac{C_{s,\psi}}{|x|^{1+2s}}.
\end{align*}
Thus \eqref{eq:cuenta} is proved.
Let us finish then the proof of our claim (3). Using that $U\in L_s(\R)$, \eqref{eq:identity} and Plancherel's identity,
for any $\psi\in\mathcal{S}$,
\begin{equation}
\label{eq:distributions}
\begin{aligned}
	\langle (-\Delta)^sU,\psi\rangle &= \langle U,(-\Delta)^s\psi\rangle=\int_{\R}U(x)(-\Delta)^s\psi(x)\,dx \\
	&=\int_\R|\xi|^{-2s}\widehat{F}(\xi)\overline{\widehat{(-\Delta)^s\psi}(\xi)}\,d\xi \\
	&= \int_\R|\xi|^{-2s}\widehat{F}(\xi)\overline{|\xi|^{2s}\widehat{\psi}(\xi)}\,d\xi \\
	&=\int_\R\widehat{F}(\xi)\overline{\widehat{\psi}(\xi)}\,d\xi= \int_{\R}F(x)\psi(x)\,dx=\langle F,\psi\rangle.
\end{aligned}
\end{equation}

\item \textbf{$U$ is in $C^{0,\alpha+2s}$ and \eqref{eq:laregularidaddeU} holds.}
We showed in $(1)$ that $U$ is bounded. From \eqref{acotaciondeUporF},
\begin{equation}
\label{laprimera}
\|U\|_{L^\infty}\leq C_{s,R_0}\|F\|_{L^\infty}.
\end{equation}
Let $x_1,x_2\in\R$.
Suppose that $|x_1-x_2|\geq1$. Then, by using \eqref{laprimera},
\begin{equation}
\label{lasegunda}
|U(x_1)-U(x_2)| \leq 2\|U\|_{L^\infty}\leq C_{s,R_0}\|F\|_{L^\infty}|x_1-x_2|^{\alpha+2s}.
\end{equation}
Assume next that $|x_1-x_2|<1$. Let us take $r>0$ sufficiently large
so that $r>R_0+|x_1|+|x_2|$. As $F=0$ outside the interval $(-R_0,R_0)$ and $r>R_0$, we can write
\begin{equation}
\label{eq:U1U2}
\begin{aligned}
U(x_1)-U(x_2) &=A_{-s}\int_{-r}^r\big(F(y)-F(x_1)\big)\big(|x_1-y|^{-1+2s}-|x_2-y|^{-1+2s}\big)\,dy\\
&\quad+A_{-s}F(x_1)\int_{-r}^r\big(|x_1-y|^{-1+2s}-|x_2-y|^{-1+2s}\big)\,dy \\
&=A_{-s}\int_{-r}^r\big(F(y)-F(x_1)\big)\big(|x_1-y|^{-1+2s}-|x_2-y|^{-1+2s}\big)\,dy\\
&\quad+A_{-s}F(x_1)\big(N_{s,r}(x_1)-N_{s,r}(x_2)\big).
\end{aligned}
\end{equation}
Recall the expression for the derivative of the function
$N_{s,r}(x)$ for any $-r<x<r$ given in \eqref{N}.
In particular, we can use such a formula for any point between $x_1$ and $x_2$
because we have chosen $r$ large enough so that $-r<x_1,x_2<r$.
By the Mean Value Theorem, for some $\xi$ between $x_1$ and $x_2$, we have
\begin{align*}
|N_{s,r}(x_1)-N_{s,r}(x_2)| &= |N_{s,r}'(\xi)||x_1-x_2| \\
&\leq \bigg[\frac{1}{(\xi+r)^{1-2s}}+\frac{1}{(r-\xi)^{1-2s}}\bigg]|x_1-x_2|\to0,
\end{align*}
as $r\to\infty$.
Therefore, by taking the limit as $r\to\infty$ in \eqref{eq:U1U2}, we see that
$$
|U(x_1)-U(x_2)|\leq A_{-s}\int_{\R}|F(y)-F(x_1)|\big||x_1-y|^{-1+2s}-|x_2-y|^{-1+2s}\big|\,dy.
$$
The last integral is split into
$$
\int_\R~=\int_{|x_1-y|<2|x_1-x_2|}+\int_{|x_1-y|\geq2|x_1-x_2|}=:I+II.
$$
If $|y-x_1|<2|x_1-x_2|$ then $|y-x_2|\leq 4|x_1-x_2|$. Hence
\begin{align*}
I &\leq \int_{|x_1-y|<2|x_1-x_2|}\frac{|F(y)-F(x_1)|}{|x_1-y|^{1-2s}}\,dy
+\int_{|x_1-y|<2|x_1-x_2|}\frac{|F(y)-F(x_1)|}{|x_2-y|^{1-2s}}\,dy \\
&\leq [F]_{C^{0,\alpha}}\int_{|x_1-y|<2|x_1-x_2|}\frac{|y-x_1|^\alpha}{|x_1-y|^{1-2s}}\,dy \\
&\quad+C_\alpha[F]_{C^{0,\alpha}}\int_{|x_2-y|<4|x_1-x_2|}\frac{(|y-x_2|^\alpha+|x_2-x_1|^\alpha)}{|x_2-y|^{1-2s}}\,dy \\
&= C_{\alpha,s}[F]_{C^{0,\alpha}}|x_1-x_2|^{\alpha+2s}.
\end{align*}
To estimate the second integral, suppose that $|x_1-y|\geq 2|x_1-x_2|$. Let $\xi$ be an intermediate
point between $x_1$ and $x_2$. Then $|x_1-y|\leq|x_1-\xi|+|\xi-y|\leq|x_1-x_2|+|\xi-y|\leq
\frac{1}{2}|x_1-y|+|\xi-y|$. As a consequence, $|x_1-y|<2|\xi-y|$ and so
$|\xi-y|^{-2+2s}\leq C_s|x_1-y|^{-2+2s}$.
Using this, the Mean Value Theorem and the fact that $\alpha+2s<1$, we can estimate
\begin{align*}
II &\leq [F]_{C^{0,\alpha}}\int_{|x_1-y|\geq2|x_1-x_2|}|y-x_1|^\alpha
\big||x_1-y|^{-1+2s}-|x_2-y|^{-1+2s}\big|\,dy \\
&\leq C_s[F]_{C^{0,\alpha}}|x_1-x_2|\int_{|x_1-y|\geq2|x_1-x_2|}|y-x_1|^\alpha|x_1-y|^{-2+2s}\,dy \\
&=C_{\alpha,s}[F]_{C^{0,\alpha}}|x_1-x_2|^{\alpha+2s}.
\end{align*}
These estimates for $I$ and $II$, together with \eqref{laprimera} and \eqref{lasegunda},
imply~\eqref{eq:laregularidaddeU}.

\item \textbf{$(-\Delta)^sU$ is a well defined $C^{0,\alpha}$-function
and is given by the pointwise formula}
$$
(-\Delta)^sU(x)=A_s\int_{\R}\frac{U(x)-U(y)}{|x-y|^{1+2s}}\,dy,\qquad x\in\R.
$$
The pointwise formula follows from the results of \cite[p.~73]{Silvestre-CPAM}, see also~\cite{Stinga}.
Observe that the integral above is absolutely convergent and we have the estimate
\begin{equation}
\label{eq:acotacion}
\begin{aligned}
\int_{\R}\frac{|U(x)-U(y)|}{|x-y|^{1+2s}}\,dy&\leq [U]_{C^{0,\alpha+2s}}\int_{|x-y|<1}\frac{|x-y|^{\alpha+2s}}{|x-y|^{1+2s}}\,dy \\
&\quad+2\|U\|_{L^\infty}\int_{|x-y|\geq1}\frac{1}{|x-y|^{1+2s}}\,dy \\
&\leq C_{\alpha,s}\|U\|_{C^{0,\alpha+2s}}.
\end{aligned}
\end{equation}
This estimate is valid uniformly in $x\in\R$, hence $(-\Delta)^sU$ is bounded.
By \cite[p.~74]{Silvestre-CPAM}, see also \cite{Stinga}, we conclude that $(-\Delta)^sU$ is in $C^{0,\alpha}(\R)$.

\item \textbf{We have}
$$
(-\Delta)^sU(x)=F(x)\textbf{,}
$$
\textbf{in the pointwise sense.}
We have seen in (3) that $(-\Delta)^sU=F$ in the sense of distributions. In particular,
by the computation made in~\eqref{eq:distributions},
\begin{equation}
\label{eq:casi}
\int_{\R}U(x)(-\Delta)^s\psi(x)\,dx=\int_{\R}F(x)\psi(x)\,dx,
\end{equation}
for any $\psi\in\mathcal{S}$. If we show that
\begin{equation}
\label{eq:simetria}
\int_{\R}U(x)(-\Delta)^s\psi(x)\,dx=\int_{\R}\psi(x)(-\Delta)^sU(x)\,dx.
\end{equation}
then, by using \eqref{eq:casi}, we get
$(-\Delta)^sU(x)=F(x)$, for a.e. $x\in\R$, and, by continuity, $(-\Delta)^sU(x)=F(x)$, for every $x\in\R$.
So we are left to show \eqref{eq:simetria}.
Since $(-\Delta)^sU$ is bounded (see (5)) the integral in the right hand side of \eqref{eq:simetria}
is absolutely convergent. We write the left hand side of \eqref{eq:simetria} as
\begin{equation}
\label{1}
\int_{\R}U(x)(-\Delta)^s\psi(x)\,dx=A_s\int_{\R}\int_{\R}U(x)\frac{\psi(x)-\psi(y)}{|x-y|^{1+2s}}\,dy\,dx,
\end{equation}
and
\begin{equation}
\label{2}
\begin{aligned}
\int_{\R}U(y)(-\Delta)^s\psi(y)\,dy 
&=A_s\int_{\R}\int_{\R}U(y)\frac{\psi(y)-\psi(x)}{|y-x|^{1+2s}}\,dx\,dy \\
&=-A_s\int_{\R}\int_{\R}U(y)\frac{\psi(x)-\psi(y)}{|x-y|^{1+2s}}\,dy\,dx.
\end{aligned}
\end{equation}
In the second identity in \eqref{2} we applied Fubini's Theorem.
To justify it, observe that \eqref{eq:cuenta} and \eqref{eq:acotacion} hold for $\psi\in\mathcal{S}$, so
\begin{align*}
\int_{\R}|U(y)|\int_{\R}\frac{|\psi(y)-\psi(x)|}{|y-x|^{1+2s}}\,dx\,dy &\leq
C_{s,\psi}\bigg(\int_{|y|\leq1}|U(y)|\,dy+\int_{|y|>1}\frac{|U(y)|}{|y|^{1+2s}}\,dy\bigg) \\
&\leq C_{s,\psi}\|U\|_{L^\infty}<\infty.
\end{align*}
By adding \eqref{1} and \eqref{2},
$$
\int_{\R}U(x)(-\Delta)^s\psi(x)\,dx 
= \frac{A_s}{2}\int_{\R}\int_{\R}\frac{(U(x)-U(y))(\psi(x)-\psi(y))}{|x-y|^{1+2s}}
\,dy\,dx.
$$
To show that the right hand side of \eqref{eq:simetria} is also equal to the double integral above
we only need to verify that Fubini's Theorem can be applied in \eqref{2} with $U$ and $\psi$ interchanged.
But this is simpler now because of \eqref{eq:acotacion}:
$$
\int_{\R}|\psi(y)|\int_{\R}\frac{|U(y)-U(x)|}{|y-x|^{1+2s}}\,dx\,dy\leq
C_{\alpha,s}\|U\|_{C^{0,\alpha+2s}}\int_{\R}|\psi(y)|\,dy<\infty.
$$
Thus \eqref{eq:simetria} is proved.

\item \textbf{$U$ is the unique classical solution to $(-\Delta)^sU=F$ in $\R$ which
vanishes at infinity.} The previous items show that $U$ is a classical solution vanishing at infinity.
Let $V$ be another classical solution vanishing at infinity. Then the difference $W:=U-V$ satisfies
$$
\begin{cases}
(-\Delta)^sW=0, &\text{in } \R, \\
|W(x)|\to0, &\text{as } |x|\to\infty, \\
W\in L^\infty(\R).
\end{cases}
$$
By the Liouville Theorem for the fractional Laplacian (see for example
\cite{Landkof}), $W$ is a constant and, since it vanishes at
infinity, $W=0$.
\end{enumerate}
\end{proof}

\begin{rem}
It is worth noticing that in \cite[p.~117]{StSingular} identity \eqref{eq:identity} is shown for $\varphi\in\mathcal{S}$
by using spherical harmonics.
Instead, our proof is more elementary (and valid for more general functions~$\varphi$)
in the sense we only use the Gamma function and Plancherel's identity.
\end{rem}

\noindent\textbf{Acknowledgments.} We would like to thank the referee 
for detailed comments and suggestions that helped us to improve the 
presentation of the paper.



\end{document}